\newtheorem{theorem}[subsection]{Theorem}
\newtheorem{thm}[subsection]{Theorem}
\newtheorem{lemma}[subsection]{Lemma}
\newtheorem{lem}[subsection]{Lemma}
\newtheorem{cor}[subsection]{Corollary}
\newtheorem{prop}[subsection]{Proposition}
\newtheorem{proposition}[subsection]{Proposition}
\theoremstyle{definition}
\newtheorem{defn}[subsection]{Definition}
\newtheorem{definition}[subsection]{Definition}
\theoremstyle{remark}
\newtheorem{remark}[subsection]{Remark}
\newtheorem{example}[subsection]{Example}
\newtheorem{assumption}[subsection]{Assumption}
\def\numequation{\addtocounter{subsubsection}{1}\begin{equation}}
\def\nummultline{\addtocounter{subsubsubsection}{1}\begin{multline}}
\def\anumequation{\addtocounter{subsection}{1}\begin{equation}}
\newif\iffinalrun
  \newcommand{\need}[1]{}
  \newcommand{\mar}[1]{}
  \newcommand{\need}[1]{{\tiny *** #1}}
  \newcommand{\mar}[1]{\marginpar{\raggedright\tiny #1}}
\newcommand{\F}{\FF}
\newcommand{\Q}{\QQ}
\newcommand{\Z}{\ZZ}
\renewcommand{\AA}{{\mathbb A}}
\newcommand{\CC}{{\mathbb C}}
\newcommand{\FF}{{\mathbb F}}
\newcommand{\GG}{{\mathbb G}}
\newcommand{\HH}{{\mathbb H}}
\newcommand{\QQ}{{\mathbb Q}}
\newcommand{\RR}{{\mathbb R}}
\newcommand{\TT}{{\mathbb T}}
\newcommand{\ZZ}{{\mathbb Z}}
\newcommand{\bF}{\ensuremath{\mathbb{F}}}
\renewcommand{\bf}{\ensuremath{\mathbf{f}}}
\newcommand{\cO}{{\mathcal O}}
\newcommand{\rhobar}{\overline{\rho}}
\DeclareMathOperator{\Aut}{Aut}
\DeclareMathOperator{\Ext}{Ext}
\DeclareMathOperator{\Gal}{Gal}
\DeclareMathOperator{\GL}{GL}
\DeclareMathOperator{\Hom}{Hom}
\DeclareMathOperator{\im}{im}
\DeclareMathOperator{\ord}{ord}
\DeclareMathOperator{\PGL}{PGL}
\DeclareMathOperator{\rank}{rank}
\DeclareMathOperator{\Spec}{Spec}
\newcommand{\Frob}{\mathrm{Frob}}
\newcommand{\et}{\text{\'{e}t}}
\newcommand{\onto}{\twoheadrightarrow}
\DeclareMathOperator{\Sel}{Sel}
\DeclareMathOperator{\res}{res}
\DeclareMathOperator{\Iw}{Iw}
\newcommand{\legendre}[2]{\genfrac{(}{)}{}{}{#1}{#2}}
\begin{document}

\title{Level raising mod 2 and arbitrary 2-Selmer ranks}

\author[B. V. Le Hung]{Bao V. Le Hung}\email{lhvietbao@googlemail.com} 
\address{Department of Mathematics, University of Chicago,  5734 S. University Avenue, Chicago, Illinois 60637}
\author[Chao Li]{Chao Li}\email{chaoli@math.columbia.edu} 
\address{Department of Mathematics, Columbia University, 2990 Broadway, New York, NY 10027}

\classification{11F33 (primary), 11G05, 11G10 (secondary).}
\keywords{modular forms, Selmer groups}

\begin{abstract}
We prove a level raising mod $\ell=2$ theorem for elliptic curves over $\mathbb{Q}$. It generalizes theorems of Ribet and Diamond--Taylor and also explains different sign phenomena compared to odd $\ell$. We use it to study the 2-Selmer groups of modular abelian varieties with common mod 2 Galois representation. As an application, we show that the 2-Selmer rank can be arbitrary in level raising families.  
\end{abstract}

\maketitle

\renewcommand{\labelenumi}{(\arabic{enumi})}

\section{Introduction}

Let $E/\mathbb{Q}$ be an elliptic curve of conductor $N$. The modularity theorem (\cite{Wiles1995,Taylor1995, Breuil2001}) associates to $E$ a weight 2 cusp newform $f$ of level $\Gamma_0(N)$. Let $\ell$ be a prime such that $E[\ell]$ is an absolutely irreducible $G_\mathbb{Q}=\Gal(\overline{\mathbb{Q}}/\mathbb{Q})$-representation. For primes $q\nmid N \ell$ satisfying the level raising condition $a_q\equiv \pm(q+1)$ mod $\ell$, Ribet's theorem \cite{Ribet1990} ensures the existence of a weight 2 cusp form $g$ of level $\Gamma_0(Nq)$ that is new at $q$ and  $g\equiv f\pmod{\ell}$.

When $\ell>2$, Diamond and Taylor (\cite{Diamond1994a},\cite{Diamond1994}) generalize Ribet's theorem and allow one to level raise at multiple primes $q_1,\ldots q_m$ simultaneously while keeping the form $g$ new at each $q_i$. At a prime $p$ where $g$ has conductor 1 (i.e. the primes $p||N$ and $p=q_i$), the local representation of $g$ is either the Steinberg representation or its unramified quadratic twist. The two cases are distinguished by the $U_p$-eigenvalue, which we call the \emph{sign of $g$} at $p$ (because it also dictates the sign of the local functional equation at $p$). At $p||N$, the sign of $g$ is the same as the sign of $f$ by the mod $\ell$ congruence since $\ell>2$. At $q_i\not\equiv -1\pmod{\ell}$, the sign of $g$ is uniquely determined by the mod $\ell$ congruence as well. At $q_i\equiv-1\pmod{\ell}$, both signs may occur as the sign of $g$.

When $\ell=2$, the signs cannot be detected from the mod $\ell$ congruence. In fact, it is not always possible to keep the same signs at all $p||N$ when level raising (see Example \ref{exa:signs} and \ref{exa:signs2}). Nevertheless, we are able to prove the following simultaneous level raising theorem for $\ell=2$, which allows one to prescribe any signs at $q_i$ and also keep the signs at all but one chosen $p||N$.

\begin{theorem}\label{thm:mainlevelraising}
  Let $E/\mathbb{Q}$ be an elliptic curve satisfying (1-4) of Assumption \ref{ass:main}.  Let $$f=\sum_{n\ge1}a_nq^n\in S_2(N)$$ be the newform associated to $E$. Let $q_1,\ldots,q_m$ be distinct level raising primes for $E$ (Definition \ref{def:levelraisingprime}). Given prescribed signs $\varepsilon_1,\ldots,\varepsilon_m\in\{\pm1\}$ and $\epsilon_p\in\{\pm 1\}$ for $p||N$, there exists a newform $$g=\sum_{n\ge1}b_nq^n\in S_2(N\cdot q_1\cdots q_m)$$ and a prime $\lambda$ of the Hecke field $F=\mathbb{Q}(\{b_n\}_{n\ge 1})$ above 2 such that $$b_p\equiv a_p\pmod{\lambda}, \quad p\nmid N\cdot q_1\cdots q_m; \quad\qquad b_{q_i}=\varepsilon_i,\quad i=1,\ldots, m,$$  and $$b_p=\epsilon_p,\quad \text{ for all but possibly one chosen }p|| N.$$
\end{theorem}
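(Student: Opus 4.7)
I would transfer the level-raising problem to a definite quaternion algebra via the Jacquet--Langlands correspondence, where prescribed signs at Steinberg-type primes correspond directly to characters of local component groups. Specifically, I would fix a definite quaternion algebra $B/\QQ$ whose ramification set $\Sigma$ consists of $\infty$, all level-raising primes $q_1,\ldots,q_m$, and as many of the primes $p||N$ as parity permits; the constraint $|\Sigma|$ even (required for $B$ to exist) may force one $p||N$ to be omitted from $\Sigma$, and this omitted prime is the one left ``free'' in the theorem. Via Jacquet--Langlands, a newform $g$ on $\GL_2$ of level $Nq_1\cdots q_m$ with the prescribed local components corresponds to a Hecke eigenform on the finite double coset set $X = B^\times\backslash B^\times(\AA_f)/U$ for a suitable open compact $U$; at each $p \in \Sigma\setminus\{\infty\}$, the sign $\pm 1$ of $g$ at $p$ is encoded by one of the two characters of the component group $B_p^\times/(\O_{B_p}^\times \cdot \varpi_p^\ZZ)$.

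The main task is then to produce a Hecke eigenform on $X$ congruent to the transfer $f^B$ of $f$ modulo $2$, with the prescribed local component-group characters at every prime of $\Sigma \setminus \{\infty\}$. I would argue inductively, raising one level-raising prime $q_i$ at a time. The key input at each step is an Ihara-type lemma on the Shimura set $X$, localized at the maximal ideal $\m_f$ of the quaternionic Hecke algebra cut out by $\barrho_{f,2}$: such a lemma would ensure that the transition from level $U$ to level $U \cap \Iw(q_i)$ is sufficiently surjective on the mod-$2$ Hecke module at $\m_f$, and one then verifies that the resulting $q_i$-new eigenspace maps surjectively onto both sign characters at $q_i$. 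A parallel argument, possibly varying $U$ and the quaternion algebra, handles the signs $\epsilon_p$ at the primes $p||N$ inside $\Sigma$.

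The main obstacle is navigating the $\ell=2$ pathologies that make the Diamond--Taylor odd-$\ell$ argument inapplicable: multiplicity one at $\m_f$ fails in general, and the two possible signs at each Steinberg prime become Hecke-theoretically indistinguishable since $+1\equiv -1\pmod 2$. This is precisely why one must extract the sign from the character of the local component group on the quaternion side rather than from a Hecke eigenvalue, and the central technical input becomes showing that the mod-$2$ quaternionic Hecke module at $\m_f$ is ``large enough'' to realize every combination of signs at the primes in $\Sigma$. Assumption \ref{ass:main} is used to ensure $\barrho_{f,2}$ is sufficiently non-degenerate for the requisite freeness, multiplicity, and Ihara-type statements to hold. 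Finally, the flexibility to choose which $p_0||N$ is left uncontrolled follows by noting that different choices correspond to different quaternion algebras $B$, and the above argument applies to each such choice separately.
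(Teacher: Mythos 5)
Your high-level idea for the sign-prescription step---transfer to a quaternion algebra, encode the sign at each Steinberg prime as a character $\gamma_p$ of the local component group, and exploit that all such characters collapse mod $2$---is indeed the mechanism the paper uses in its final step (the twisted spaces $S_\gamma(U,A)$ of Section \ref{sec:defcase}). But there are two genuine gaps. First, the ``Ihara-type lemma at $\m_f$'' you invoke as the key input is precisely the hard point at $\ell=2$, and you cannot stay on a single definite quaternion algebra while adding Steinberg primes one at a time: each new Steinberg prime flips the parity of the ramification set, so the induction necessarily alternates between definite Shimura sets and indefinite Shimura curves. On the curves, the Diamond--Taylor proof of Ihara's lemma via Fontaine--Laffaille theory breaks down at $\ell=2$; the paper can only salvage it when $\barrho|_{G_{\QQ_2}}$ is supersingular (Lemma \ref{lem:Ihara}, using Honda systems for connected unipotent $2$-divisible groups), and in the ordinary/multiplicative-at-$2$ case it abandons this route entirely in favor of Galois deformation theory plus Allen's $2$-adic ordinary modularity lifting theorem (Theorems \ref{Krull dim} and \ref{Prescribed type}). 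Your proposal does not engage with this dichotomy, and without it the central inductive step is unsupported.

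Second, the mechanism by which you would realize both sign characters is more delicate than stated. The identification $S_\gamma(U,\FF_2)=S_{\gamma_\pi}(U,\FF_2)$ only transfers nonvanishing back to characteristic zero if reduction mod $2$ is surjective, which requires $U$ to be sufficiently small; this forces the introduction of an auxiliary prime $q_0$ of deeper level, and one must then remove the resulting ramification by a quadratic twist $\chi_{q_0}$. That twist can flip the sign at one prime $p_1\| N$, and \emph{this}---not the parity constraint on the quaternion algebra---is where the loss of control at ``one chosen $p\|N$'' in the statement actually comes from. Choosing $q_0\equiv 3\pmod 4$ with prescribed Legendre symbols (Definition \ref{Auxiliary prime definition}, Lemma \ref{Auxiliary prime}, Corollary \ref{Level raising form at auxiliary prime}) is what guarantees the auxiliary ramification is at worst quadratic and the twist is harmless at all other primes. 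Your plan omits this entire layer, and the parity bookkeeping you offer in its place would in fact sometimes let you prescribe all $p\|N$, which is not what the (sharp) examples such as \ref{exa:signs} permit.
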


\begin{remark}
  See Theorem \ref{thm:levelraising} for a more general statement including sufficient conditions to when we can prescribe signs at all $p||N$.
\end{remark}

\begin{remark}
Given $f$, one may ask which signs can occur for congruent newforms $g$ of the same level $N$ (i.e., the case $m=0$). An argument of Ribet communicated to us shows that if $N=p$ is a prime, then there always exists a congruent newform $g$ of level $p$ with $U_p$-eigenvalue $+1$. This is best possible as the example $p=11$ shows: there is a unique newform of level $p=11$ and only $+1$ occurs as the $U_p$-eigenvalue. Though Theorem \ref{thm:mainlevelraising} does not treat this case, it follows from our method that if there is an odd number of primes $p||N$, then there always exists a congruent newform $g$ of level $N$ with $U_p$-eigenvalues $+1$ for all $p||N$ (see Remark \ref{rem:allpositivesigns}). This gives a different proof of Ribet's result in the case $N=p$.
\end{remark}

To such a level raised newform $g$, the Eichler--Shimura construction associates to it a modular abelian variety $A$ with real multiplication by $\mathcal{O}_F$.  We say that $A$ is \emph{obtained from $E$ via level raising} (mod 2). Then $E$ and $A$ are congruent mod 2, i.e., $$E[2] \otimes k\cong A[\lambda]$$ as $G_\mathbb{Q}$-representations, where $k=\mathcal{O}_F/\lambda$ is the residue field. In this way we can view both the 2-Selmer group $\Sel_2(E/\mathbb{Q}) \otimes k$ (extending scalars to $k$)   of $E$ and the $\lambda$-Selmer group $\Sel(A):=\Sel_\lambda(A/\mathbb{Q})$ of $A$ as $k$-subspaces of $H^1(\mathbb{Q}, E[2]\otimes k ) = H^1(\mathbb{Q}, A[\lambda])$ cut out by different local conditions. One may ask how the Selmer rank $\dim\Sel(A)$ is distributed when $A$ varies over all abelian varies obtained from $E$ via level raising. In particular, one may ask if $\dim\Sel(A)$ can take arbitrarily large or small values in this level raising family. We prove the following theorem, which gives an affirmative answer to the latter question.

\begin{theorem}\label{thm:mainarbitrary}
    Let $E/\mathbb{Q}$ be an elliptic curve satisfying Assumption \ref{ass:main}. Then for any given integer $n\ge0$, there exist infinitely many abelian varieties $A$ obtained from $E$ via level raising, such that $\dim\Sel(A)=n$. In particular, there exist infinitely many abelian varieties $A$ obtained from $E$ via level raising, such that $\rank A(\mathbb{Q})=0$.
\end{theorem}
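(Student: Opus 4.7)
\emph{Strategy.} The plan is to build $A$ one level raising prime at a time, controlling how $\dim\Sel$ changes at each step. Set $r_0 := \dim_k\bigl(\Sel_2(E/\mathbb{Q})\otimes k\bigr)$, the Selmer rank of the starting object $A_0 = E$. For any target $n \geq 0$, I will produce a sequence of abelian varieties $A_0, A_1, \ldots, A_s$ with $A_j$ obtained from $A_{j-1}$ by one further level raising as provided by Theorem \ref{thm:mainlevelraising}, and with $\dim\Sel(A_j) - \dim\Sel(A_{j-1}) = \pm 1$ prescribed at each step so that $\dim\Sel(A_s) = n$. The freedom in the choice of the auxiliary prime at each step will in fact produce infinitely many distinct such $A_s$.

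\emph{Single-step Selmer analysis.} Suppose $A$ is obtained from $E$ by level raising at a set $S$ of primes, and let $q \notin S$ be a further level raising prime for $E$ with prescribed sign $\varepsilon \in \{\pm 1\}$; let $A'$ be the level raising of $A$ at $q$. Since $A$ and $A'$ share the residual representation $E[2]\otimes k$, both $\Sel(A)$ and $\Sel(A')$ are cut out inside $H^1(\mathbb{Q}, E[2]\otimes k)$ by local conditions that agree at every place except $q$. At $q$, the condition $L_q(A) \subset H^1(\mathbb{Q}_q, E[2]\otimes k)$ is the unramified line while $L_q(A')$ is the ``$\varepsilon$-ramified'' line associated to the Steinberg or its unramified quadratic twist; both are one-dimensional in the two-dimensional space $H^1(\mathbb{Q}_q, E[2]\otimes k)$, and they are orthogonal complements under the local Tate pairing. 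Greenberg--Wiles/Poitou--Tate then gives
\[
\dim\Sel(A') - \dim\Sel(A) \in \{-1,\, 0,\, +1\},
\]
with the outcome controlled by whether the localization map at $q$ is zero or surjective on $\Sel(A)$ and on its Poitou--Tate-dual Selmer group.

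\emph{Forcing the rank to move via Chebotarev.} To decrease the rank when $\dim\Sel(A) > 0$, I fix a nonzero $s \in \Sel(A)$ and seek a level raising prime $q$ of $E$ with $\mathrm{loc}_q(s) \neq 0$; by Chebotarev applied to the compositum of the field cut out by $s$ with the trivializing field of $E[2]$, infinitely many such $q$ exist. Theorem \ref{thm:mainlevelraising} then lets me choose $\varepsilon$ so that $L_q(A') \not\ni \mathrm{loc}_q(s)$, forcing $s \notin \Sel(A')$ and hence $\dim\Sel(A') = \dim\Sel(A) - 1$. To increase the rank, I pick instead a level raising prime $q$ such that $\mathrm{loc}_q$ vanishes on all of $\Sel(A)$ (again infinitely many $q$ by Chebotarev) and choose $\varepsilon$ so that the altered local condition at $q$ admits a genuinely new global class, yielding $\dim\Sel(A') = \dim\Sel(A) + 1$. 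Alternating these moves starting from $A_0 = E$, I reach $\dim\Sel(A_s) = n$; the multiplicity of allowed primes at each step produces infinitely many distinct $A_s$, and the case $n = 0$ specializes to $\rank A(\mathbb{Q}) = 0$.

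\emph{Main obstacle.} The delicate step will be the Chebotarev argument at $\ell = 2$. The residual image is small, and the extension of $\mathbb{Q}$ cut out by a Selmer class may share unwanted subextensions with the trivializing field of $E[2]$ or with the field defining the level raising congruence $a_q \equiv q+1 \pmod 2$. Assumption \ref{ass:main} should be exactly what is needed to ensure that the two independent conditions on $\mathrm{Frob}_q$ (being a level raising prime for $E$, and realising the prescribed (non)vanishing of $\mathrm{loc}_q(s)$) are compatible and cut out a non-empty Chebotarev class, and further to rule out the ``$0$'' outcome in the trichotomy above so that each single step really moves the rank by $\pm 1$.
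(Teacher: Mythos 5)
Your overall architecture matches the paper's: induction on the number of level raising primes, a Greenberg--Wiles parity count showing each step moves $\dim\Sel$ by at most one, and Chebotarev to find primes $q$ where the localization of $\Sel(A)$ is nonzero (to lower) or zero (to raise). The lowering half is essentially the paper's Proposition \ref{pro:ranklowering}, though even there you lean on a freedom you do not actually have: the paper can only identify the new local condition $\mathcal{L}_q(A')$ as an explicit line distinct from $H^1_{\ur}(\mathbb{Q}_q,V)$ when the sign at $q$ is $+1$ \emph{and} $\Frob_q$ has order $2$ (Lemma \ref{lem:localconditions}(3), via split toric reduction and $q$-adic uniformization). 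The signs are invisible mod $2$, so ``choosing $\varepsilon$ so that $L_q(A')\not\ni\mathrm{loc}_q(s)$'' is not a move available to you; what the paper does is fix $\varepsilon=+1$ throughout and prove the resulting line is automatically different from the unramified one. (Relatedly, the agreement of local conditions away from $q$ — especially at $v=2$, where a group scheme killed by $2$ need not have a unique finite flat model — is the content of Lemma \ref{lem:localconditions}(4) and needs Assumption \ref{ass:main}(4); it is not automatic from sharing the residual representation.)

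The genuine gap is in the rank-raising step. You assert that when $\mathrm{loc}_q$ kills $\Sel(A)$ one can ``choose $\varepsilon$ so that the altered local condition at $q$ admits a genuinely new global class,'' but you give no mechanism, and the natural one fails at $\ell=2$. The parity count shows the relaxed Selmer group $H^1_{\mathcal{R}}(V)$ exceeds the strict one by exactly one dimension, and global reciprocity for the cup-product pairing shows its image in the $2$-dimensional space $H^1(\mathbb{Q}_q,V)$ is isotropic for $\langle\,,\,\rangle_q$. For $\ell>2$ that pins the image down to one of two isotropic lines; in characteristic $2$ the pairing is alternating, so \emph{all three} lines are isotropic, and the image of $H^1_{\mathcal{R}}(V)$ could perfectly well be the third line — neither $\mathcal{L}_q(A)$ nor $\mathcal{L}_q(A')$ — in which case $\Sel(A')=\Sel(A)$ and the rank does not go up. The paper's resolution (Remark \ref{rem:isotropic}, Lemmas \ref{lem:rankraising} and \ref{lem:isotropic}) is to replace the bilinear pairing by the quadratic form $Q_v$ of Zarhin/O'Neil/Poonen--Rains refining it: global reciprocity holds for $Q$, only two of the three lines are $Q_q$-isotropic, $\mathcal{L}_q(A)$ is one of them by Poonen--Rains, and an explicit computation with the degree-$2$ map $E\to\mathbb{P}^1$ shows $\mathcal{L}_q(A')$ is the other (for $\Frob_q^2$ close to $1$, which also uses $\Delta<0$). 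Without this quadratic refinement your raising step does not go through.
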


\begin{remark}
 Mazur--Rubin \cite{Mazur2010} investigated 2-Selmer groups in quadratic twist families over arbitrary number fields in connection with Hilbert's tenth problem. In particular, they proved that if $\Gal(\mathbb{Q}(E[2])/\mathbb{Q})\cong S_3$ and $E$ has negative discriminant $\Delta$, then there exist infinitely many quadratic twists of $E/\mathbb{Q}$ of any given 2-Selmer rank. Theorem \ref{thm:mainarbitrary} is an analogue replacing quadratic twist families with level raising families.  In contrast to quadratic twisting, the level raising procedure never introduces places of additive reduction (at the cost of working with modular abelian varieties of higher dimension).
\end{remark}

\begin{remark}
  Our work is originally motivated by the recent work of W. Zhang \cite{Zhang2014}, who uses the level raising technique to prove the $\ell$-part of the Birch and Swinnerton-Dyer conjecture in the analytic rank one case when $\ell>3$. The strategy is to choose an auxiliary imaginary quadratic field $K$ (over which Heegner points exist) and prove that it is possible to lower the the $\ell$-Selmer rank over $K$ from one to zero via level raising (mod $\ell$). Then the Jochnowitz congruence of Bertolini--Darmon \cite{Bertolini1999} (relating the $\ell$-part of $L'(E/K,1)$ and $L(A/K,1)$) allows one to reduce the rank one case to the rank zero case, which is known thanks to the work of Skinner--Urban and Kato (see \cite[Theorem 2]{Skinner2014}). In contrast to Theorem \ref{thm:mainarbitrary}, it is not true that one can obtain arbitrary 2-Selmer rank \emph{over $K$}. In fact there is an obstruction for lowering the 2-Selmer rank over $K$ from one to zero as shown in \cite{Li2015}. Thus this strategy would not naively work for $\ell=2$ due to the aforementioned obstruction for rank lowering.
\end{remark}

We now outline the strategy of the proofs. The level raising problem with prescribed $U_p$-eigenvalues can be thought of as the problem of constructing a modular lift $\rho$ of the mod 2 representation $\overline{\rho}=\overline{\rho}_{E,2}$ with prescribed local types. For example, at a level raising prime $p$ we wish to force the local Galois representation $\rho|_{G_{\QQ_p}}$ to lie in the Steinberg or twisted Steinberg component (depending on the prescribed sign) of a local lifting ring. The usual technique to construct such lifts (e.g. as in \cite{Gee}, \cite{BLGGT}) is to write down a global deformation problem with prescribed local types, and then show that the deformation ring $R$ has modular points. This is achieved by showing that $R$ has positive Krull dimension, while at the same time being a finite $\ZZ_2$-algebra. The first fact is usually established by a Galois cohomology computation, whereas the second fact follows from a suitable modularity lifting theorem. In our situation, the Galois cohomology computation only shows $\dim R \geq 0$, while the image of $\overline{\rho}$ being dihedral causes trouble in applying modularity lifting theorems at $\ell=2$. When $\overline{\rho}$ is ordinary at 2, the modularity lifting theorem of P. Allen \cite{Allen2014} supplies the second ingredient (Theorem \ref{Prescribed type}). While the Krull dimension estimate fails, it is possible to salvage it by looking at a slightly different deformation problem, for which we prescribe the local types at all but one auxiliary prime, where we do not prescribe anything (Theorem \ref{Krull dim}). This allows us to construct (still in the ordinary case) the desired level raising form, except that it might be ramified at our auxiliary prime. However, with a well-chosen auxiliary prime, it turns out that the form thus constructed is either unramified or its quadratic twist is unramified (Corollary \ref{Level raising form at auxiliary prime}).  Twisting back allows us to get rid of this auxiliary prime at the cost of not prescribing the $U_p$-eigenvalue at one prime $p||N$. This establishes Theorem \ref{thm:mainlevelraising} in the ordinary case. This part of the argument generalizes well to totally real fields.

In the non-ordinary case, we do not have sufficiently strong modularity lifting theorems to make the above argument work. However, it turns out one can adapt the arguments of \cite{Diamond1994} in this case. In the definite case, the level raising result (Lemma \ref{Level raising definite}) is known to the experts (e.g., Kisin \cite{Kisin2009}). In the indefinite case, the crucial point is that while Fontaine--Laffaille theory breaks down at $\ell=2$, there is a version of it that works for unipotent objects (Lemma \ref{lem:Ihara}). This produces a level raising form (Prop. \ref{unrefined raising}), but with no control on the $U_p$-eigenvalues. One then shows that the existence of one such level raising form implies the existence of others, where we can change the $U_p$-eigenvalue. To make this work, we need to work with Shimura varieties at neat level, and thus we can only manipulate the signs at the cost of allowing ramification at an auxiliary prime. The same method in the previous paragraph will allow us to get rid of this auxiliary prime.

The proof of the main Theorem \ref{thm:mainarbitrary} consists of two parts: rank lowering (Theorem \ref{thm:ranklowering}) and rank raising (Theorem \ref{thm:rankraising}). In each case, we proceed by induction on the number of level raising primes.  When raising the level by one prime $q$, we can keep all the local conditions the same except at $q$ (Lemma \ref{lem:localconditions}). We then use a parity argument inspired by Gross--Parson \cite{Gross2012} to lower or raise the Selmer rank by one (Lemma \ref{lem:lowering}, \ref{lem:rankraising}). A Chebotarev density argument in fact shows that a positive density set of primes $q$ would work at each step (Prop. \ref{pro:ranklowering}, \ref{pro:rankraising}).

Implementing the parity argument encounters several complications for $\ell=2$.
\begin{enumerate}
\item The mod 2 Galois representation $\overline{\rho}=\overline{\rho}_{E,2}$ has small image ($\cong S_3$ under Assumption \ref{ass:main}) and $\Frob_q$ is either trivial or of order two for a level raising prime $q$. Thus it is not possible to choose $\Frob_q$ with distinct eigenvalues as in \cite{Gross2012}. Nevertheless, we can make use of the order two $\Frob_q$ to pin down the local condition at $q$ when the sign at $q$ is $+1$ (Lemma \ref{lem:localconditions} (3)).  Since the signs are not detected in the mod 2 congruence, it is crucial to have prescribed signs when raising the level, which is guaranteed by Theorem \ref{thm:mainlevelraising}.
\item  Since a finite group scheme over $\mathbb{Q}_2$ killed by 2 does not have a unique finite flat model over $\mathbb{Z}_2$ (unlike the case $\ell>2$), there is an extra uncertainty for the local condition at 2 even when $E$ has good reduction at 2. This uncertainty goes away when imposing Assumption \ref{ass:main} (4) by Lemma \ref{lem:localconditions} (4). This same assumption is also needed for proving the level raising Theorem \ref{thm:mainlevelraising} (see Remark  \ref{rem:2splits}).
\item In characteristic 2, it is crucial to work with not only the local Tate pairing but also a quadratic form giving rise to it (Remark \ref{rem:isotropic}). We utilize the quadratic form constructed by Zarhin \cite[\S 2]{Zarhin1974} using Mumford's Heisenberg group. Its properties were studied in O'Neil \cite{ONeil2002} and Poonen--Rains \cite{Poonen2012} and we provide an explicit formula for it in the proof of Lemma \ref{lem:isotropic}.
\end{enumerate}

The paper is organized as follows: Section \ref{Main definitions and examples} contains definitions and examples on level raising. Section \ref{sec:auxiliary-primes} discusses the auxiliary primes needed for level raising. Section \ref{sec:ordinarycase} and \ref{Simultaneous level raising: supersingular case} proves the level raising theorem. Section \ref{Preliminaries on local conditions} contains basic facts about Selmer groups. Section \ref{Rank lowering} and \ref{Rank raising} proves Theorem \ref{thm:mainarbitrary}.

\begin{acknowledgements}
We would like to thank
Brian Conrad,
Henri Darmon,
Benedict Gross,
Barry Mazur,
Bjorn Poonen,
Ken Ribet,
Richard Taylor and
Wei Zhang for helpful conversations or comments. We would also like to thank the referee for a careful reading and numerous suggestions. The examples in this article are computed using Sage (\cite{Stein2013}) and Magma (\cite{Bosma1997}). This material is also based upon work supported by the National Science Foundation under Grant No. 0932078000 while BLH was in residence at the Mathematical Sciences Research Institute in Berkeley, California, during the Fall 2014 semester.
\end{acknowledgements}

\section{Main definitions and examples}
\label{Main definitions and examples}
Let $E/\mathbb{Q}$ be an elliptic curve of conductor $N$. Let $\bar\rho=\bar\rho_{E,2}: G_{\mathbb{Q}}\rightarrow \Aut(E[2])\cong \GL_2(\mathbb{F}_2)$ be the Galois representation on the 2-torsion points. By the modularity theorem, $\bar\rho$ comes from a weight 2 cusp newform of level $N$. We make the following mild assumptions.

\begin{assumption}\label{ass:main}\quad
  \begin{enumerate}
\item $E$ has good or multiplicative reduction at 2 (i.e., $4\nmid N$). 
\item $\bar\rho$ is surjective and not induced from $\mathbb{Q}(i)$.
\item The Serre conductor $N(\bar\rho)$ is equal to the odd part of $N$. If $2\mid N$, $\bar\rho$ is ramified at 2.
\item If $2\nmid N$, $\bar\rho|_{G_{\mathbb{Q}_2}}$ is nontrivial;
\item $E$ has negative discriminant $\Delta$;
\end{enumerate}
\end{assumption}

\begin{remark}\label{rem:2torsionfield}
  The assumption (2) that $\bar \rho$ is surjective implies that the 2-torsion field $L=\mathbb{Q}(E[2])$ is a $\GL_2(\mathbb{F}_2)\cong S_3$-extension over $\mathbb{Q}$ and $\Gal(L/\mathbb{Q})\cong S_3$ acts on $E[2]$ via the 2-dimensional irreducible representation. The unique quadratic subextension of $L$ is $\mathbb{Q}(\sqrt{\Delta})$ (see \cite[p. 305]{Serre1972}). 
\end{remark}

\begin{remark}
  When $\bar\rho$ is induced from $\mathbb{Q}(i)$, a variant of Theorem \ref{thm:mainlevelraising} holds where we cannot control the ramification at one chosen $p|N$. Nevertheless, the proof of Theorem \ref{thm:mainarbitrary} still goes through in this case because the local condition at $p$ would be trivial.
\end{remark}

\begin{remark}\label{rem:componentgroup}
All the level raised forms will be automatically new at $p\mid N$ due to assumption (3).  This assumption is also equivalent to saying that the component group of the Neron model of $E$ at any $p\mid N$ has odd order (see \cite[Lemma 4]{Gross2012}).
\end{remark}

\begin{remark}\label{rem:2splits}
Notice that $\bar\rho|_{G_{\mathbb{Q}_2}}$ is trivial if and only if 2 splits in $L$, if and only if $E$ is ordinary at $2$ and $2$ splits in the quadratic subfield $\mathbb{Q}(\sqrt{\Delta})\subseteq L$. The assumption (4) is only needed for the proof of Lemma \ref{lem:localconditions} (4) and for fulfilling the last assumption of Theorem \ref{Prescribed type}. See Remark \ref{rem:localat2} and \ref{rem:trivialat2}.
\end{remark}

\begin{remark}\label{rem:negativedisc}
  The assumption (5) that $\Delta<0$ implies that the complex conjugation acts nontrivially on $E[2]$. The assumption (5) is needed for the proof of Theorem \ref{thm:mainarbitrary} (used in Lemma \ref{lem:localconditions} (2) and \ref{lem:raisingdensity}) but not for Theorem \ref{thm:mainlevelraising} (see Theorem \ref{thm:levelraising} and Remark \ref{rem:positivedisc}).
\end{remark}

Under these assumptions, $E[2]$ (as $G_\mathbb{Q}$-module) together with the knowledge of reduction type at a prime $q$  pins down the local condition defining $\Sel_2(E/\mathbb{Q})$ at $q$ (see Lemma \ref{lem:localconditions} for more precise statements). We would like to keep $E[2]$, but at a prime $q\nmid 2N$ of choice, to switch good reduction to multiplicative reduction and thus change the local condition at $q$. For this to happen, a necessary condition is that $\bar\rho(\Frob_q)=\left(\begin{smallmatrix}q & * \\ 0 & 1\end{smallmatrix}\right)\pmod{2}$ (up to conjugation). Namely, $\bar\rho(\Frob_q)=\left(\begin{smallmatrix}1 & 0 \\ 0 & 1\end{smallmatrix}\right)$ or $\left(\begin{smallmatrix}1 & 1 \\ 0 & 1\end{smallmatrix}\right)$ (order 1 or 2 in $S_3$).

\begin{definition}\label{def:levelraisingprime}
  We call $q\nmid 2N$ a \emph{level raising prime} for $E$ if $\Frob_q$ is of order 1 or 2 acting on $E[2]$. Notice that there are  lots of level raising primes: by the Chebotarev density theorem, they make up 2/3 of all primes. If we write $f=\sum_{n\ge1}a_nq^n\in S_2(N)$ (normalized so that $a_1=1$) to be the newform associated to the elliptic curve $E$. Then by definition $q\nmid 2N$ is a level raising prime for $E$ if and only if $a_q$ is even.
\end{definition}

The level raising theorem of Ribet ensures that this necessary condition is also sufficient.

\begin{theorem}[{\cite[Theorem 1]{Ribet1990}}]\label{thm:Ribet}
Assume $2\nmid N$, $\bar\rho$ is surjective and $N(\bar\rho)=N$. Let $q\nmid 2N$ be a level raising prime. Then $\bar\rho$ comes from a weight 2 newform of level $Nq$.
\end{theorem}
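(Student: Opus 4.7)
The plan is to produce the desired newform by locating the maximal ideal $\mathfrak{m}$ of the new-at-$q$ quotient of the Hecke algebra $\mathbb{T}(Nq)$ cut out by $\bar\rho$ and the level raising data $T_q \equiv \varepsilon(q+1) \pmod{2}$ for some sign $\varepsilon$. Equivalently, I want to show that $\bar\rho$ appears in the $q$-new part of $J_0(Nq)[2]$. First I would record the short exact sequence relating $J_0(Nq)$ to the image of the two degeneracy maps $\alpha,\beta \col J_0(N)^2 \to J_0(Nq)$, so that the obstruction to finding a new-at-$q$ form congruent to $f$ lies in the intersection of the $q$-old and $q$-new parts. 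Because $\bar\rho$ comes from level $N$ by hypothesis, the output of the proof is to transport this datum to level $Nq$ without landing entirely in the old part.

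The key move is to pass to the Néron model of $J_0(Nq)$ at $q$ and study its component group $\Phi_q$. By Grothendieck's monodromy computation together with the \v{C}erednik--Drinfeld uniformization, $\Phi_q$ is naturally described as the character group of the toric part of the special fibre, which in turn is expressed via the Jacquet--Langlands correspondence in terms of automorphic forms of level $N$ on the definite quaternion algebra $B$ ramified at $\{q,\infty\}$. The systems of Hecke eigenvalues mod $2$ occurring in $\Phi_q/2\Phi_q$ are therefore exactly those attached to forms of level $N$ and, because $\bar\rho$ is irreducible (hence non-Eisenstein), it does appear in that quotient provided the level raising congruence $a_q \equiv \varepsilon(q+1) \pmod{2}$ holds, which is automatic once $a_q$ is even. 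Finally I would invoke Ihara's lemma to argue that the natural map from the mod $2$ old part into $J_0(Nq)[2]$ has Eisenstein cokernel, so that the class produced in $\Phi_q/2\Phi_q$ genuinely corresponds to a new-at-$q$ eigenform; unravelling via Eichler--Shimura and deformation to characteristic $0$ then gives the desired newform $g$.

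The main obstacle is handling $\ell=2$ properly: Ihara's lemma, mod $\ell$ multiplicity one, and the identification of $\Phi_q \otimes \F_2$ with the mod $2$ quaternionic forms all rely on the non-Eisensteinness of $\mathfrak{m}$ and on the image of $\bar\rho$ being large enough to rule out small pathological submodules of $J_0(N)[2]$. The surjectivity hypothesis on $\bar\rho$ is exactly what is needed to secure both: it forces $\bar\rho$ to be absolutely irreducible, it makes $\Frob_q$ act on $E[2]$ with order $\leq 2$ so that $\bar\rho|_{G_{\Q_q}}$ is semisimple of the precise shape that matches the monodromy filtration at $q$, and together with $N(\bar\rho)=N$ and $2\nmid N$ it guarantees that the congruence takes place in the new-at-$p$ part for every $p\mid N$. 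Once these inputs are in place, Ribet's original argument goes through; the only delicate point specific to $\ell = 2$ is verifying that no Eisenstein congruence obstructs the key injectivity, which is handled by the surjectivity of $\bar\rho$.
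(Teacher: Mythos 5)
The paper does not prove Theorem~\ref{thm:Ribet}: it is quoted verbatim from \cite[Theorem 1]{Ribet1990} and used as the base case of the induction in Proposition~\ref{unrefined raising}. Judged as a reconstruction of Ribet's argument, your plan has a genuine gap at its center: the hypothesis that $q$ is a level raising prime (i.e.\ that $a_q$ is even) is never actually used in any step. You assert that $\bar\rho$ ``does appear in that quotient provided the level raising congruence holds,'' but that assertion \emph{is} the theorem. The mechanism that makes it true is the one this paper itself spells out in the harder indefinite case (Lemma~\ref{Level raising indefinite}): writing $i^*=\pi_1^*+\pi_2^*$ for the degeneracy map and $i_*$ for its adjoint under Poincar\'e duality, one computes $i_*i^*=\left(\begin{smallmatrix} q+1 & T_q \\ S_q^{-1}T_q & q+1\end{smallmatrix}\right)$, which lies in $\mathfrak{m}$ precisely because $q$ is odd and $T_q\in\mathfrak{m}$; combined with the injectivity mod $2$ of $i^*$ after localizing at the non-Eisenstein $\mathfrak{m}$ (Ihara's lemma), this forces $\coker(i^*)_{\mathfrak{m}}$ to be non-torsion, which is what produces a $q$-new eigenvalue system lifting $\mathfrak{m}$. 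Without this adjointness-plus-determinant step there is no proof, only a restatement.

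Two further points would derail the argument as written. First, Ihara's lemma controls the \emph{kernel} of the degeneracy map, not the cokernel: if the map from the mod $2$ old part into $J_0(Nq)[2]$ had Eisenstein cokernel at $\mathfrak{m}$, then every form of level $Nq$ congruent to $f$ would be $q$-old, i.e.\ the negation of what you want. Second, the component group $\Phi_q$ is not the character group of the toric part (they are related by the monodromy pairing but differ), and it is the wrong object to localize at $\mathfrak{m}$: component groups of $J_0(Nq)$ at $q$ are typically Eisenstein (Mazur, Ribet), so a non-Eisenstein $\mathfrak{m}$ will in general \emph{not} occur in $\Phi_q/2\Phi_q$. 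The object Ribet uses is the character group $X\cong\ZZ[S]^0$ of degree-zero divisors on the supersingular points, which injects into the $q$-new part because the $q$-old part has good reduction at $q$; and the relevant semistable model of $X_0(Nq)$ at $q$ is Deligne--Rapoport, not \v{C}erednik--Drinfeld (the latter concerns Shimura curves and belongs to level \emph{lowering}). Your closing remarks about surjectivity of $\bar\rho$, the shape of $\bar\rho|_{G_{\Q_q}}$, and $N(\bar\rho)=N$ forcing newness at $p\mid N$ are fine.
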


So whenever $q$ is a level raising prime, there exists a newform $g=\sum_{n\ge1}b_nq^n\in S_2(Nq)$ of level $Nq$ such that $$g\equiv f\pmod{2}.$$ More precisely, there exists a prime $\lambda\mid 2$ of the (totally real) Hecke field $F=\mathbb{Q}(\{b_n\}_{n\ge1})$ such that we have a congruence $b_p\equiv a_p\pmod{\lambda}$, for any $p\ne q$.

In the next two sections, we will prove the following theorem generalizing Theorem \ref{thm:Ribet}.

\begin{theorem}\label{thm:levelraising}
  Let $E/\mathbb{Q}$ be an elliptic curve satisfying (1--4) of Assumption \ref{ass:main}. Let $$f=\sum_{n\ge1}a_nq^n\in S_2(N)$$ be the newform associated to $E$. Let $q_1,\ldots,q_m$ be distinct level raising primes for $E$. Given any prescribed signs $\varepsilon_1,\ldots,\varepsilon_m\in\{\pm1\}$ and $\epsilon_p\in\{\pm 1\}$ for $p|| N$, there exists a newform $$g=\sum_{n\ge1}b_nq^n\in S_2(N\cdot q_1\cdots q_m)$$ and a prime $\lambda$ of the Hecke field $F=\mathbb{Q}(\{b_n\}_{n\ge 1})$ above 2 such that $$b_p\equiv a_p\pmod{\lambda}, \quad p\nmid N\cdot q_1\cdots q_m; \quad b_{q_i}=\varepsilon_i,\quad i=1,\ldots, m,$$  and $$b_p=\epsilon_p,\quad \text{ for all but possibly one chosen }p||N.$$ Moreover, if either of the following two assumptions holds,
  \begin{enumerate}
  \item  There exists $p|N$ such that $\ord_p(N)>1$ and $\ord_p(\Delta)$ is odd, or 
  \item $E$ has discriminant $\Delta>0$.
  \end{enumerate}
Then one can further require that $$b_p=\epsilon_p,\quad \text{ for all  }p||N.$$
\end{theorem}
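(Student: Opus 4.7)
The plan is to realize the desired $g$ as the automorphic form attached to a carefully chosen modular lift $\rho$ of $\bar\rho=\bar\rho_{E,2}$: a sign $\pm 1$ at a prime $r$ corresponds to cutting out the Steinberg (respectively, unramified quadratic twist of Steinberg) component of the local lifting ring at $r$. Since modularity lifting theorems at $\ell=2$ behave very differently depending on whether $E$ is ordinary at $2$, I would split the argument into the ordinary case and the non-ordinary case.

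In the ordinary case, I would set up a global deformation problem for $\bar\rho$ with prescribed Steinberg type of sign $\varepsilon_i$ at each $q_i$, sign $\epsilon_p$ at each $p\|N$, the minimal condition at the remaining primes of the Serre conductor, and an ordinary condition at $2$. Allen's modularity lifting theorem (Theorem \ref{Prescribed type}) would then produce the required newform $g$ as soon as the associated universal deformation ring $R$ is known to be finite over $\ZZ_2$ of positive Krull dimension. The Galois cohomology bound only gives $\dim R\ge 0$, so following Theorem \ref{Krull dim} I would instead relax the local condition at one auxiliary prime $q_0$ chosen by Chebotarev; this pushes the dimension up to $\dim R\ge 1$ while Allen's theorem still applies. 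The resulting lift may be ramified at $q_0$, but Corollary \ref{Level raising form at auxiliary prime} shows that with a well-chosen $q_0$, either the form itself or its twist by an appropriate quadratic character is unramified at $q_0$. Untwisting to remove this auxiliary ramification changes the sign at exactly one $p\|N$, yielding the main statement in the ordinary case.

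In the non-ordinary case no modularity lifting theorem of the required strength is available, so I would adapt Diamond--Taylor. Existence of a level-raised form without sign control in the definite quaternionic setting is Lemma \ref{Level raising definite}, and transporting this to the indefinite setting proceeds by an Ihara-style argument whose key input is that, despite the failure of Fontaine--Laffaille theory at $\ell=2$, the unipotent version still works (Lemma \ref{lem:Ihara}); this yields Proposition \ref{unrefined raising}, a congruent level-raised form with no control of $U_p$-eigenvalues. Signs are then adjusted by moving between connected components of an appropriate Shimura variety at neat level, which introduces ramification at one auxiliary prime, to be removed by the same twisting argument as in the ordinary case at the cost of one prescribed sign at $p\|N$. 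The main obstacle in both cases is the Krull-dimension deficit of the relevant deformation ring, which forces the auxiliary-prime maneuver and is ultimately what makes one $\epsilon_p$ unrecoverable in the general statement.

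For the moreover clause, the task is to relocate the cost of the auxiliary quadratic twist from the set $\{p\|N\}$ to a place where no sign is being prescribed. In case (1), the existence of $p\mid N$ with $\ord_p(N)>1$ and $\ord_p(\Delta)$ odd means $E$ is additive at $p$ and quadratic-twist-equivalent there to a curve with multiplicative reduction, producing an extra ``free'' place at $p$ outside of $\{p\|N\}$ at which the auxiliary twist character may be ramified without disturbing any $\epsilon_p$. In case (2), $\Delta>0$ forces complex conjugation to act trivially on $E[2]$, so the archimedean place plays the same role: a twist by the sign character costs nothing at any finite $p\|N$. In each case, I would choose the quadratic character used to remove the auxiliary-prime ramification in the previous paragraphs so that its extra ramification sits precisely at this free place, which preserves all prescribed $\epsilon_p$ and gives the stronger conclusion.
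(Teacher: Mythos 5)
Your outline of the main statement follows the paper's own route essentially verbatim (deformation problem with prescribed components plus Allen's theorem in the ordinary case; Lemmas \ref{Level raising definite}, \ref{lem:Ihara} and Proposition \ref{unrefined raising} followed by the sign-switching argument in the supersingular case; auxiliary prime $q_0$ and a quadratic untwist in both), and that part is fine. The genuine gap is in your treatment of the ``moreover'' clause, where the mechanism you propose does not work. You want to take the character that removes the ramification at $q_0$ and let it be ramified additionally at a ``free'' place: at a prime $p$ with $\ord_p(N)>1$ in case (1), or at the archimedean place in case (2). In case (1) this fails for two reasons: twisting $g$ by a character ramified at a finite prime $p\mid N$ changes the local conductor at $p$, so the resulting form need not lie in $S_2(N q_1\cdots q_m)$; and, more importantly, relocating the ramification of the twisting character does nothing to control its \emph{unramified} values $\chi(\Frob_r)$ at the primes $r\| N$ and $r=q_i$, which is exactly what determines whether the signs $\epsilon_r$, $\varepsilon_i$ survive the twist. (Your side claim that $\ord_p(N)>1$ and $\ord_p(\Delta)$ odd force $E$ to be a quadratic twist of a multiplicative curve at $p$ is also false in general -- that is a condition on $v_p(j)$, not on $v_p(\Delta)$.) In case (2) the proposal is vacuous: for $q_0\equiv 3\bmod 4$ the unique quadratic Dirichlet character ramified at $q_0$ and at no other finite place is already odd, so ``putting the extra ramification at infinity'' returns the same character $\chi_{q_0}$ and explains nothing.

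The correct mechanism, which is where conditions (1) and (2) actually enter, lives entirely in the \emph{choice of the auxiliary prime} $q_0$ (Definition \ref{Auxiliary prime definition} and Lemma \ref{Existence of auxiliary primes}). The twist by $\chi_{q_0}$ multiplies the sign at each conductor-one prime $r$ by $\chi_{q_0}(\Frob_r)=\legendre{r}{q_0}$, and $q_0$ is chosen so that this Legendre symbol equals $+1$ at all $r\mid Nq_1\cdots q_m$ except one exceptional prime $p_1$; the exception is forced because the Chebotarev argument needs $\QQ(E[2])$ to be linearly disjoint from $\QQ(\sqrt{-1},\sqrt{p_2},\dots,\sqrt{q_m})$, and $\QQ(\sqrt{\Delta})\subset\QQ(E[2])$ obstructs this unless $\sqrt{\Delta}$ is excluded from that multiquadratic field. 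Under hypothesis (1) one takes $p_1$ to be the prime with $\ord_{p_1}(N)>1$ and $\ord_{p_1}(\Delta)$ odd: excluding $\sqrt{p_1}$ removes $\sqrt{\Delta}$ and restores linear disjointness, while no sign is being prescribed at $p_1$ (its local conductor exceeds one), so the twist preserves every $\epsilon_p$ and $\varepsilon_i$. Under hypothesis (2), $\Delta>0$ makes $\QQ(\sqrt{\Delta})=\QQ(\sqrt{|\Delta|})$, and the order-three condition on $\Frob_{q_0}$ already forces the same splitting behaviour in $\QQ(\sqrt{\Delta})$ as the Legendre conditions do, so the two sets of Chebotarev conditions are compatible and one may demand $\legendre{p}{q_0}=+1$ at \emph{every} $p\mid Nq_1\cdots q_m$ with no exception (Remark \ref{rem:positivedisc}). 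In both cases the twisting character remains the one ramified only at $q_0$, and all prescribed signs survive.
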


\begin{remark}
  Our proof of this level raising theorem is divided into two parts according to whether $E$ is good ordinary or multiplicative at 2 (which we call the \emph{ordinary} case) or $E$ is good supersingular at 2 (which we call the \emph{supersingular} case). The proof in the ordinary case indeed only relies on the fact that $\bar\rho|_{G_{\mathbb{Q}_2}}$ is reducible.
\end{remark}

  This level raised newform $g$, via Eichler--Shimura construction, determines an abelian variety $A$ over $\mathbb{Q}$ up to isogeny, of dimension $[F:\mathbb{Q}]$, with real multiplication by $F$. We will choose an $A$ in this isogeny class so that $A$ admits an action by the maximal order $\mathcal{O}_F$. By Assumption \ref{ass:main} (2),  $A$ is unique up to a prime-to-$\lambda$ isogeny. By construction, for almost all primes $p$, $\Frob_p$ has same characteristic polynomials on $E[2]\otimes k$ and $A[\lambda]$. Hence by Chebotarev's density theorem and the Brauer--Nesbitt theorem we have $$E[2] \otimes k \cong A[\lambda]$$ as $G_\mathbb{Q}$-representations.

\begin{definition}\label{def:levelraising}
We say that $A$ is obtained from $E$ \emph{via level raising} at $q_1,\ldots q_m$ and that $A$ and $E$ are \emph{congruent mod 2}. We denote the \emph{sign of $A$} at $q_i$ by $\varepsilon_i(A)=\varepsilon_i$. 
\end{definition}

\begin{remark}
  We make the following convention: $E$ itself is understood as obtained from $E$ via level raising at $m=0$ primes. This is convenient for the induction argument later.
\end{remark}

\begin{example}
    Consider the elliptic curve $E=X_0(11):y^2+y=x^3-x^2-10x-20$ with Cremona's label $11a1$. We list the first few Hecke eigenvalues of the modular form $f$ associated to its isogeny class $11a$ in Table \ref{tab:level11}. We see that $q=7$ is a level raising prime (so are $q=13,17,19$). The space of newforms of level 77 has dimension 5, which corresponds to three isogeny classes of elliptic curves  $(77a, 77b, 77c)$ and one isogeny class of abelian surfaces $(77d)$. Among them $(77a, 77b)$ are congruent to $E$ mod 2: i.e., obtained from $E$ via level raising at 7. Their first few Hecke eigenvalues  are listed in Table \ref{tab:level11}.  Notice that both signs $\pm$ occur at 7 via level raising, but only the sign $-$ occurs at 11.
    \begin{table}[h]\caption{Level raising at 7}
    \centering
    \begin{tabular}[h]{*{9}{|c}|}
      &  2 & 3 & 5 & 7 & 11 & 13 & 17 & 19\\\hline
     $11a$ & $-2$ & $-1$& 1& $-2$& 1& 4& $-2$ & 0\\\hline
     $77a$ & 0 & $-3$ & $-1$ & $-1$ & $-1$ & $-4$ & 2 & $-6$\\\hline
     $77b$ & 0 & 1 & 3 & 1 & $-1$ & $-4$ & $-6$ & 2\\
    \end{tabular}
    \label{tab:level11}
  \end{table}
\end{example}

\begin{example}\label{exa:signs}
  We list all the possible signs in level raising families obtained from $E=X_0(11)$ in Table \ref{tab:11} at $(q_i)=(7), (13), (17), (7,13), (7,19)$. We have also included the dimension of the level raised abelian variety $A$ in the table. Notice that any prescribed signs at $q_i$ can occur as predicted by Theorem \ref{thm:levelraising}. But only one sign occurs at 11 for $(q_i)=(7), (13), (7,13), (7,19)$, 
    \begin{table}[h]\caption{$E=11a1$}\label{tab:11}
    \centering
\begin{tabular}[h]{|c|c|c|c|c|c||c|c|c|c|c|}
     & 11 & 7 & 13 & 17 & $\dim A$ &       & 11 & 7 & 13 &$\dim A$\\\hline
$11a$  & $+$  &   &    &   & 1 & $1001a$ & $+$  & $-$ & $-$ & 1 \\\hline
$77a$  & $-$  & $-$ &    & & 1   & $1001j$ & $+$  & $-$ & $+$ &5  \\\hline
$77b$  & $-$  & $+$ &    & & 1   & $1001k$ & $+$  & $+$ & $-$ & 5 \\\hline
$143a$ & $-$  &   & $-$  & & 1   & $1001n$ & $+$  & $+$ & $+$ & 11 \\\hline
$143c$ & $-$  &   & $+$  & & 6   &       & 11 & 7 & 19& \\\hline
$187a$ & $+$  &   &    & $-$ & 1 & $1463c$ & $+$  & $-$ & $+$ & 7 \\\hline
$187c$ & $+$  &   &    & $+$ & 2 & $1463e$ & $+$  & $+$ & $-$ & 9 \\\hline
$187d$ & $+$  &   &    & $-$ & 2 & $1463g$ & $+$  & $+$ & $+$ & 15 \\\hline
$187e$ & $-$  &   &    & $-$ & 3 & $1463i$ & $+$  & $-$ & $-$ & 16 \\\hline
$187f$ & $-$  &   &    & $+$ & 4 &       &    &   &   &
\end{tabular}
\end{table}
\end{example}

\begin{example}\label{exa:signs2}
 Table \ref{tab:35} illustrates possible signs obtained from $E=35a1: y^2 + y = x^3 + x^2 + 9x + 1$ via level raising at $q=19,23,31$. All possible 8 combinations of signs occur for $q=31$. For $q=19$, $23$, only the combination $(+,+)$ and $(-,-)$ occur as the signs at $(5,7)$. But all 4 combinations at $(5,q)$ or $(7,q)$ occur, as predicted by Theorem \ref{thm:levelraising}.
    \begin{table}[h]\caption{$E=35a1$}\label{tab:35}
    \centering
\begin{tabular}[h]{|c|c|c|c|c||c|c|c|c|c|}
  & 5 & 7 & 19 &    $\dim A$&                 &   5 & 7   & 31  &$\dim A$\\\hline
665a & $+$ & $+$ & $-$      & 1& 1085a & $+$ & $-$ & $+$ & 1 \\\hline
665b & $+$ & $+$ & $+$      & 1& 1085f & $+$ & $-$ & $-$ &1 \\\hline
665h & $-$ & $-$ & $+$      & 4& 1085g & $+$ & $-$ & $+$ &1 \\\hline
665i & $-$ & $-$ & $-$      & 6& 1085h & $+$ & $-$ & $-$ &1 \\\hline
  & 5 & 7 & 23 &              & 1085k & $-$ & $+$ & $+$  &3\\\hline
805c & $-$ & $-$ &     $-$  & 1& 1085l & $+$ & $-$ & $+$ &3 \\\hline
805d & $-$ & $-$ &     $+$  & 1& 1085m & $+$ & $-$ & $-$ &4 \\\hline
805g & $+$ & $+$ &     $-$  & 4& 1085n & $+$ & $+$ & $-$ &4 \\\hline
805m & $+$ & $+$ &     $+$  & 8& 1085o & $-$ & $-$ & $+$ &7 \\\hline
     &   &   &              & & 1085p & $-$ & $+$ & $-$  &7\\\hline
     &   &   &              & & 1085q & $-$ & $-$ & $-$  &8\\\hline
     &   &   &              & &1085r & $+$ & $+$ & $+$  &11\\
\end{tabular}
\end{table}     
\end{example}

\section{Auxiliary primes}\label{sec:auxiliary-primes}
In the next three sections, the elliptic curve $E$ is assumed to satisfy (1-4) of Assumption \ref{ass:main}. Recall that $\overline{\rho}:G_{\QQ}\to\GL_2(\FF_2)$ is the mod $2$ Galois representation of $E$ and $q_1,\ldots,q_m$ are distinct level raising primes for $E$.
\begin{defn} \label{Auxiliary prime definition} A prime $q_0\nmid Nq_1\cdots q_m$ is called an \emph{auxiliary} prime for $\overline{\rho}$ if 
\begin{enumerate}
\item $q_0\equiv3$ mod 4.
\item $\overline{\rho}(\Frob_{q_0})$ has order 3.
\item The Legendre symbol $\legendre{p}{q_0}=1$ for all $p|Nq_1\cdots q_m$ except one chosen prime $p= p_1$ such that $\ord_{p_1}(N)$ is odd.
\end{enumerate}
\end{defn}

\begin{lem}  \label{Existence of auxiliary primes} The set of auxiliary primes $q_0$ has positive density in the set of all primes.
\end{lem}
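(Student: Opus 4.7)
The plan is a Chebotarev density argument. First, I will build a finite Galois extension $M/\QQ$ such that all three conditions of Definition~\ref{Auxiliary prime definition} translate into conditions on the conjugacy class of $\Frob_{q_0}$ in $\Gal(M/\QQ)$; it then suffices to exhibit a single $\sigma \in \Gal(M/\QQ)$ meeting all three, and positive density follows immediately. Concretely, I would take $M$ to be the compositum of $L = \QQ(E[2])$, $\QQ(i)$, and the quadratic fields $\QQ(\sqrt{p})$ for each odd $p \mid Nq_1 \cdots q_m$ with $p \neq p_1$; if $2 \mid N$ and $p_1 \neq 2$, I would also adjoin the subfield of $\QQ(\zeta_8)$ encoding $\legendre{2}{q_0} = 1$. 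The three conditions then read: (1) $\Frob_{q_0}|_{\QQ(i)}$ is nontrivial, (2) $\Frob_{q_0}|_L$ is a $3$-cycle in $\Gal(L/\QQ) \cong S_3$, and (3) $\Frob_{q_0}|_{\QQ(\sqrt{p})} = 1$ for each prescribed $p$.

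The critical compatibility is between (1) and (2). Any $3$-cycle lies in $A_3 \subset S_3$, which fixes the unique quadratic subfield $\QQ(\sqrt{\Delta})$ of $L$ (Remark~\ref{rem:2torsionfield}); so (2) already forces $\sigma|_{\QQ(\sqrt{\Delta})} = 1$. This is compatible with (1) precisely when $\QQ(\sqrt{\Delta}) \neq \QQ(i)$, which is exactly Assumption~\ref{ass:main}(2): $\bar\rho$ with image $S_3$ is automatically induced from the fixed field of the index-two subgroup $A_3$, namely $\QQ(\sqrt{\Delta})$, so ``not induced from $\QQ(i)$'' means $\QQ(\sqrt{\Delta}) \neq \QQ(i)$.

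Adding (3) reduces to a linear-algebra check in the $\FF_2$-vector space of quadratic characters of $G_\QQ$. I need the prescribed values $\chi_{-1}(\sigma) = -1$, $\chi_{d_\Delta}(\sigma) = +1$ (with $d_\Delta$ the squarefree part of $\Delta$), and $\chi_p(\sigma) = +1$ for $p \in S := \{p \mid Nq_1 \cdots q_m : p \text{ odd},\ p \neq p_1\}$ to be simultaneously realizable. Expanding $\chi_{d_\Delta}$ in the standard $\FF_2$-basis $\{\chi_{-1}, \chi_2\} \cup \{\chi_p : p \text{ odd prime}\}$, the only potential obstruction is a linear relation expressing $\chi_{d_\Delta}$ purely in terms of our constrained characters. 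The choice of $p_1$ (subject to $\ord_{p_1}(N)$ being odd) gives the flexibility to break such a relation: I would preferentially take $p_1$ to be an odd prime divisor of $d_\Delta$ with $\ord_{p_1}(N)$ odd when one exists (note $p \mid d_\Delta \Rightarrow p \mid N$ since $d_\Delta$ is supported on bad primes), otherwise $p_1 = 2$ when $2 \| N$; in any remaining degenerate case, the relation does hold, but a direct check using the sign of $\Delta$ and the residues mod $4$ of the primes $p \mid d_\Delta$ shows the prescribed values are automatically compatible.

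The hard part will be this case analysis for selecting $p_1$: I need to verify that across all configurations of $d_\Delta$ versus the bad primes of $E$ (and the sign of $\Delta$), some valid $p_1$ renders the three constraints simultaneously satisfiable. Once this bookkeeping is settled, Chebotarev's theorem applied to the Frobenius class of $\sigma$ immediately yields a positive density of primes $q_0$.
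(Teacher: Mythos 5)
Your overall strategy is the same as the paper's (Chebotarev in the compositum of $L=\QQ(E[2])$ with the relevant quadratic fields), and your treatment of the interaction between conditions (1) and (2) via $\QQ(\sqrt{\Delta})\neq\QQ(i)$ matches the paper's use of Assumption \ref{ass:main}(2). You have also correctly isolated the real subtlety, which the paper's own proof passes over rather quickly: the order-$3$ condition forces $\legendre{d_\Delta}{q_0}=1$, and this may conflict with the conditions $q_0\equiv 3\bmod 4$ and $\legendre{p}{q_0}=1$ when $\chi_{d_\Delta}$ is a product of the constrained characters.

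The gap is in your last step. In the ``remaining degenerate case'' ($d_\Delta<0$ and every prime dividing $d_\Delta$ lies in the constrained set, i.e.\ $p_1\nmid d_\Delta$), the prescribed values are \emph{not} ``automatically compatible'': one has $\legendre{d_\Delta}{q_0}=\legendre{-1}{q_0}\prod_{p\mid -d_\Delta}\legendre{p}{q_0}=(-1)\cdot 1=-1$, which flatly contradicts $\legendre{d_\Delta}{q_0}=1$, so the set of auxiliary primes is \emph{empty}. No appeal to the residues of the $p\mid d_\Delta$ modulo $4$ can rescue this: quadratic reciprocity never enters, because every condition is already a Kronecker symbol with $q_0$ downstairs and the symbol is multiplicative in the numerator. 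The correct resolution is not to verify compatibility unconditionally but to rule this case out: one needs either $\Delta>0$ (cf.\ Remark \ref{rem:positivedisc}) or a choice of $p_1$ with $p_1\mid d_\Delta$, i.e.\ $\ord_{p_1}(\Delta)$ odd --- this is precisely what makes $\QQ(E[2])$ and $M=\QQ(\sqrt{-1},\sqrt{p_2},\dots,\sqrt{q_m})$ linearly disjoint, which is the (tersely justified) pivot of the paper's proof, and it is reflected in the side hypotheses of Theorem \ref{thm:levelraising} and the choice of $p_1$ made in the supersingular section. So either add that hypothesis on $p_1$ to your argument, or explain why the curves you care about always admit such a $p_1$; as written, the final sentence of your case analysis asserts something false.
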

\begin{proof}  Observe that the first and last conditions are equivalent to demanding that $\Frob_{q_0}$ decomposes in a particular way in the extension $$M=\QQ(\sqrt{-1},\sqrt{p_2},\cdots \sqrt{p_s},\sqrt{q_1},\cdots \sqrt{q_m})/\QQ,$$ where $p_i$ are primes factors of $N$. On the other hand, the second condition is demanding that $\Frob_{q_0}$  has order 3 in $S_3=\Gal(\mathbb{Q}(E[2])/\mathbb{Q})$. Since $K=\QQ(\sqrt{\Delta})\subset \QQ(E[2])$ is the unique even degree subextension (Remark \ref{rem:2torsionfield}), it follows that $\QQ(E[2])/\mathbb{Q}$ and $M/\mathbb{Q}$ are linearly disjoint Galois extensions (by  Assumption \ref{ass:main} (2) that $K\ne \mathbb{Q}(\sqrt{-1})$). The Chebotarev density theorem thus implies the lemma. 
\end{proof}

\begin{remark}\label{rem:positivedisc}
  Note also that when $\Delta>0$, it is possible to get the third item also at $p=p_1$. This is because in this case $\QQ(E[2])\cap \QQ(\sqrt{-1},\sqrt{p_1},\cdots \sqrt{p_s},\sqrt{q_1},\cdots \sqrt{q_m})=\QQ(\sqrt{|\Delta|})=\QQ(\sqrt{\Delta})$, and the second and third requirement of $q_0$ give the same requirement on the splitting behavior of $\Frob_{q_0}$ in $\QQ(\sqrt{\Delta})$.
\end{remark}

The following lemma imposes a strong restriction on the lifts of $\overline{\rho}|_{G_{\QQ_{q_0}}}$ to characteristic $0$: 
\begin{lem} \label{Auxiliary prime} Let $\cO$ be a sufficiently large finite extension of $\ZZ_2$, with residue field $\FF$. Let $K$ be a finite extension of $\QQ_p$ with $p\neq 2$, whose residue field $k$ has order $q\equiv3$ mod 4. Let $\overline{r}:G_K\to \GL_2(\FF)$ be an unramified representation with $\det\overline{r}$ trivial and $\overline{r}(\Frob_K)$ has distinct eigenvalues in $\FF$. Suppose $r:G_K\to \GL_2(\cO)$ lifts $\overline{r}$ with cyclotomic determinant. Then $r|_{I_K} \otimes \eta$ is unramified, where $\eta:I_K\to \cO^\times$ is a quadratic character.
\end{lem}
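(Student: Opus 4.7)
The plan is to exploit the Frobenius conjugation relation $FXF^{-1}=X^q$ on tame inertia, together with the distinct-eigenvalue hypothesis on $\overline{r}(\Frob_K)$, to pin down $r|_{I_K}$ as a scalar character of order dividing $2$. First, since $p\neq 2$, the wild inertia $P_K$ is pro-$p$; its image under $r$ lies in $1+\lambda M_2(\cO)$, which is pro-$2$, so $r|_{P_K}$ is trivial, and $r|_{I_K}$ factors through the pro-$2$ quotient $\ZZ_2(1)$ of tame inertia. Thus $r(I_K)$ is topologically procyclic, generated by $X:=r(\sigma)$ for a topological generator $\sigma$, and the standard Frobenius action on tame inertia gives $FXF^{-1}=X^q$ with $F:=r(\Frob_K)$. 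Because $p\neq 2$, the cyclotomic character is unramified on $G_K$, giving $\det X=1$ and $\det F = q$. After enlarging $\cO$ if necessary, I diagonalize $F=\operatorname{diag}(\alpha,\beta)$ with $\alpha-\beta\in\cO^\times$, and note the eigenvalues $\alpha',\beta'$ of $X$ lie in $1+\lambda\cO$ with $\alpha'\beta'=1$. The relation forces $\{\alpha',\beta'\}$ to be stable under $t\mapsto t^q$, leaving two cases.

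In the fixed case $\alpha'=\alpha'^q$ and $\beta'=\beta'^q$, write $q-1=2m$ with $m$ odd (valid since $q\equiv 3\pmod 4$); in the pro-$2$ group $1+\lambda\cO$ the $m$-th power map is a bijection, so $\alpha'^{q-1}=1$ forces $\alpha'^2=\beta'^2=1$, hence $\alpha',\beta'\in\{\pm 1\}$, and $\alpha'\beta'=1$ yields $\alpha'=\beta'$. Thus $X=\pm I+N$ with $N$ nilpotent; using $N^2=0$ the conjugation relation collapses to $FNF^{-1}=qN$. In the $F$-eigenbasis the diagonal entries of $N$ must vanish since $q\neq 1$ in $\cO$, and a nonzero off-diagonal entry would force $\alpha/\beta=q$ or $\beta/\alpha=q$; together with $\alpha\beta=q$ this gives $\beta^2=1$ (or $\alpha^2=1$), so $\alpha,\beta$ both reduce to $1\in\F$, contradicting the distinct-residue hypothesis. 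Hence $N=0$ and $X=\pm I$.

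In the swapped case $\alpha'=\beta'^q\neq\beta'$, rewriting the relation as $FX=X^qF$ shows that $F$ exchanges the $\alpha'$- and $\beta'$-eigenlines of $X$, so in an $X$-eigenbasis $F$ is antidiagonal. Then $\Tr F=0$ and $\det F=q$, so the eigenvalues of $F$ are $\pm\sqrt{-q}$; since $-q\equiv 1\pmod\lambda$, these both reduce to $1\in\F$, again contradicting the distinct-eigenvalue hypothesis on $\overline{F}$, so this case is impossible. The main obstacle is the fixed case: the hypothesis $q\equiv 3\pmod 4$ is exactly what prevents higher $2$-power roots of unity from appearing as $\alpha',\beta'$, and the distinct residual eigenvalues of $F$ are used twice---both to kill the nilpotent part in the fixed case and to rule out the swapped case. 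Finally, $X=\pm I$ means $r|_{I_K}$ is a scalar character of $I_K$ of order dividing $2$, so twisting by an appropriate (trivial or order-$2$) character $\eta\colon I_K\to\cO^\times$ trivializes $r|_{I_K}\otimes\eta$, as claimed.
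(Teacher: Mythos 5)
Your proof is correct, and it reaches the same endpoint as the paper ($r(\tau)=\pm I$) by a genuinely different organization of the argument. The paper fixes an $F$-eigenbasis and shows by an inductive $\mathfrak{m}$-adic estimate (comparing $\mathfrak{m}$-adic valuations on both sides of the expanded relation) that the off-diagonal entries of $r(\tau)$ lie in $\bigcap\mathfrak{m}^k=0$; it then reads off that the diagonal entries are $(q-1)$-st roots of unity inside a pro-$2$ group, hence lie in $\mu_2$ because $v_2(q-1)=1$. You instead analyze the \emph{eigenvalues} of $X=r(\tau)$: the conjugation relation forces the eigenvalue multiset to be stable under $t\mapsto t^q$, leading to a fixed/swapped dichotomy. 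In the fixed case you deduce $\alpha'^2=\beta'^2=1$ via the bijectivity of odd-power maps on the pro-$2$ group $1+\lambda\cO$ (again $q\equiv 3\bmod 4$ is the key), force $\alpha'=\beta'$ from $\det X=1$, and then kill a possible nilpotent part by the relation $FNF^{-1}=qN$ plus the distinct-residual-eigenvalue hypothesis. In the swapped case you get $\Tr F=0$, whence both eigenvalues of $F$ reduce to $1$, contradicting the same hypothesis. This is a valid alternative: it replaces the paper's single inductive matrix-entry computation with Jordan-form-style structural reasoning plus a short case split, at the cost of having to separately dispose of the nilpotent part and the swapped case. Both proofs use exactly the same inputs ($p\ne 2$ kills wild inertia, the residual eigenvalues of Frobenius are distinct, and $q\equiv3\bmod4$), and both are equally rigorous; your version is arguably more conceptual but a bit longer in bookkeeping.
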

\begin{proof} Let $P_K\subset I_K$ be the wild inertia group of $K$, and choose a tame generator $I_K/P_K=\langle \tau\rangle $. Let $\mathfrak{m}$ denote the maximal ideal of $\cO$. Let $\sigma$ be a choice of Frobenius of $K$. Because $\overline{r}$ is unramified, $r(P_K)=1$, and $r$ is determined by the two matrices $r(\sigma)$, $r(\tau)$ which are subject to the relations
\begin{align*} 
r(\sigma)r(\tau)r(\sigma)^{-1}&=r(\tau)^q,\\
\det r(\tau)&=1, \\
\det r(\sigma)&=q^{-1}.
\end{align*} 
 Without loss of generality, we may assume that $r(\sigma)=\begin{pmatrix} \alpha & 0\\ 0 & \beta \end{pmatrix}$, with $\bar{\alpha}\neq \bar{\beta}$. Writing $r(\tau)=1+\begin{pmatrix} a & b\\ c & d \end{pmatrix}$ with $a,b,c,d \in \mathfrak{m}$, we obtain
\[ 1+ \begin{pmatrix} a & \alpha\beta^{-1}b\\  \alpha^{-1}\beta c& d \end{pmatrix}=\left(1+ \begin{pmatrix} a & b\\  c& d \end{pmatrix}\right)^q. \] 
Expanding gives
\[ \begin{pmatrix} (1-q)a & (\alpha\beta^{-1}-q)b\\  (\alpha^{-1}\beta-q) c& (1-q)d \end{pmatrix}={{q}\choose{2}}  \begin{pmatrix} a & b\\  c& d \end{pmatrix}^2+\cdots +  \begin{pmatrix} a & b\\   c& d \end{pmatrix}^q.\]
Suppose we know $b,c \in \mathfrak{m}^k$ for some $k\geq 1$. Then comparing terms in the above equation we obtain $(\alpha\beta^{-1}-q)b \in \mathfrak{m}^{k+1}$, $(\alpha^{-1}\beta-q)c \in \mathfrak{m}^{k+1}$. Because $\alpha\beta^{-1}-q, \alpha^{-1}\beta-q$ are units (since $\bar\alpha\ne\bar\beta$), we have $b,c\in \mathfrak{m}^{k+1}$. Continuing inductively, we get $b,c \in \cap \mathfrak{m}^k=0$. Thus $r(\tau)$ must be diagonal, and must furthermore satisfy:
\begin{align*} &(1+a)^q=(1+a),\\ &(1+d)^q=(1+d),\\
&(1+a)(1+d)=1.
\end{align*}
Thus the image $r(I_K)$ is a subgroup of $\mu_{q-1}$. On the other hand because $\overline{r}(I_K)=1$, $r(I_K)$ is a pro-2 group, and since $q\equiv3$ mod 4, we must have $r(I_K)\subset \mu_2$.
\end{proof}
\begin{remark} \label{det trick} If we do not impose the condition that the determinant of the lift is the cyclotomic character (or just unramified), the same computation as above shows that the determinant of the lift restricted to $I_K$ is the unique non-trivial quadratic character.
\end{remark}
\begin{cor}\label{Level raising form at auxiliary prime} If $g$ is an eigenform with corresponding automorphic representation $\pi=\otimes \pi_p$ such that its mod $2$ representation $\overline{\rho}_{g}\cong \overline{\rho}$ and $q_0$ is an auxiliary prime, then either $\pi_{g}$ is unramified at $q_0$, or $\pi_q\otimes \chi_{q_0}$ is unramified, where $\chi_{q_0}$ is the unique quadratic character which is ramified at $q_0$ and unramified everywhere else.
\end{cor}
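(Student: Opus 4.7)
The plan is to apply Lemma \ref{Auxiliary prime} directly to the restriction $r := \rho_g|_{G_{\QQ_{q_0}}}$ of the $\lambda$-adic representation attached to $g$, and then translate the resulting inertia twist into the global Dirichlet character $\chi_{q_0}$.

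First I would verify the hypotheses of Lemma \ref{Auxiliary prime} with $K=\QQ_{q_0}$, $\bar r=\overline{\rho}|_{G_{\QQ_{q_0}}}$, and $r=\rho_g|_{G_{\QQ_{q_0}}}$. By the definition of an auxiliary prime, $q_0\equiv 3\pmod 4$ and $q_0\nmid Nq_1\cdots q_m$; the latter together with Assumption \ref{ass:main} ensures $\bar r$ is unramified. The determinant of $\bar r$ is trivial because the mod $2$ cyclotomic character is trivial. The element $\overline{\rho}(\Frob_{q_0})$ has order $3$ in $\GL_2(\FF_2)$, so its characteristic polynomial is $x^2+x+1$ and its eigenvalues in $\FF_4\subseteq\FF$ (after enlarging $\cO$ if necessary) are the two distinct primitive cube roots of unity. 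Finally, the determinant of $\rho_g$ is the cyclotomic character because $g$ has trivial nebentypus. Lemma \ref{Auxiliary prime} then produces a quadratic character $\eta:I_{\QQ_{q_0}}\to \cO^\times$ such that $\rho_g|_{I_{\QQ_{q_0}}}\otimes\eta$ is unramified.

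Next, I would identify $\eta$ with a character coming from a global one. Since $q_0$ is odd, there is exactly one nontrivial quadratic character of $I_{\QQ_{q_0}}$ (the common inertia character of both ramified quadratic extensions of $\QQ_{q_0}$); thus either $\eta=1$ or $\eta$ is this unique tame character. In the first case $\rho_g$ is already unramified at $q_0$, and local--global compatibility gives that $\pi_{q_0}$ is unramified. In the second case, I would use $q_0\equiv 3\pmod 4$ to observe that $\QQ(\sqrt{-q_0})/\QQ$ has discriminant $-q_0$, so the associated Dirichlet character $\chi_{q_0}$ is ramified only at $q_0$; its restriction to $I_{\QQ_{q_0}}$ is then forced to be the unique nontrivial inertia character and hence equals $\eta$. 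Therefore $\rho_g\otimes\chi_{q_0}$ is unramified at $q_0$, and compatibility of local Langlands with character twisting yields that $\pi_{q_0}\otimes\chi_{q_0}$ is unramified.

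There is essentially no obstacle beyond the hypothesis check; the entire content of the corollary is contained in Lemma \ref{Auxiliary prime}, with only the elementary classification of quadratic characters of $\QQ_{q_0}^\times$ needed to pass from the abstract inertia character $\eta$ to the global $\chi_{q_0}$.
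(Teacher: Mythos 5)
Your proof is correct and follows precisely the route the paper intends: the corollary carries no explicit argument and is evidently meant as a direct application of Lemma \ref{Auxiliary prime} to $\rho_g|_{G_{\QQ_{q_0}}}$, followed by the elementary observation that the unique nontrivial quadratic character of $I_{\QQ_{q_0}}$ is the inertia restriction of the global Dirichlet character $\chi_{q_0}$ (associated here to $\QQ(\sqrt{-q_0})$ since $q_0\equiv 3\pmod 4$). Your verification of the lemma's hypotheses, including the enlargement of $\cO$ to contain $\FF_4$ so that the order-3 Frobenius has rational distinct eigenvalues, and the use of local--global compatibility at the end, fills in exactly the details the authors leave implicit.
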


\begin{remark} The role of the auxiliary prime is to resolve the following tension: On the one hand, the space of automorphic forms that we need to investigate behaves well only when the level subgroup $U$ is ``sufficiently small'', on the other hand we want to construct automorphic forms with prescribed local behavior at \emph{all} primes. If the residual characteristic $\ell>2$ and a suitable largeness condition on the image of $\overline{\rho}$ holds, this problem can be resolved by allowing extra ramification at an auxiliary prime with the property that any automorphic form congruent to $\overline{\rho}$ will automatically be unramified at the auxiliary prime (This is what is done in \cite{Diamond1994}, for example). In the situation we are interested in, it is not possible to find auxiliary primes that achieves this, however Corollary \ref{Level raising form at auxiliary prime} shows that we can assure that automorphic forms lifting $\overline{\rho}$ will have at most quadratic ramification at the auxiliary primes. This turned out to be sufficient for our purposes, by making a quadratic twist to get rid of the extra ramification.  
\end{remark}

\section{Simultaneous level raising: ordinary case}
\label{sec:ordinarycase}

We fix a finite extension $E$ of $\mathbb{Q}_2$ which is sufficiently large, with ring of integers $\cO$ and residue field $\bF$. Let $\pi$ denote a uniformizer. 
Let $\mathfrak{Ar}_\cO$ be the category of Artinian local $\cO$ algebras with residue field identified with $\FF$ via the $\cO$-algebra structure. Let $F$ be a totally real field. We fix a finite set of places $S$ of $F$, and a subset $\Sigma\subset S$ which contains all places $v|p$ and $v|\infty$. Let $\overline{\rho}:G_{F,S}\to \GL_2(\bF)$ be an absolutely irreducible representation, where $G_{F,S}$ is the Galois group of the maximal extension of $F$ unramified outside the finite places in $S$. Denote by $V_\bF=\bF^2$ the $G_{F,S}$-module induced by $\overline{\rho}$ and $\beta_\bF$ the standard basis of $V_\bF$. Let $\psi: G_{F,S} \to \cO^\times$ be a character lifting $\det \overline{\rho}$. If $v|2$, denote by $\Lambda(G_{F_v})$ the completed group algebra $\cO[[G_{F_v}^{\mathrm{ab}}(2)]]$ of the maximal pro-2 quotient of $G_{F_v}^{\mathrm{ab}}$. It is a complete local Noetherian commutative ring with residue field $\FF$, and we let $\mathfrak{Ar}_{\Lambda(G_{F_v})}$ the category of local Artinian $\Lambda(G_{F_v})$-algebras with residue field $\FF$. Note that $\Lambda(G_{F_v})$ carries the universal character that is trivial $\bmod{\pi}$. Similarly, let $\Lambda'(G_{F_v})$ be the completed group algebra  $\cO[[I_{{F_v}^{\mathrm{ab}}}(2)]]$. It is a subalgebra of $\Lambda(G_{F_v})$, and the restriction of the aforementioned universal character to $I_{F_v}$ takes values in this subalgebra.

For each place $v$ of $F$, let $D_v$ be the functor on $\mathfrak{Ar}_\cO$ that assigns to $(A,m_A) \in \mathfrak{Ar}_\cO$ the set of isomorphism classes of tuples $(V_A,\iota_A,\beta)$ where $V_A$ is a finite free $A$ module with $G_{F_v}$ action, $\beta$ a basis of $V_A$, and $\iota_A:V_A/m_A\cong V_\bF$ an isomorphism of $G_{F_v}$-modules such that $\iota_A(\beta)=\beta_\bF$. This data is the same as a homomorphism $\rho_A:G_{F_v}\to \GL_2(A)$ lifting $\overline{\rho}|_{G_{F_v}}$. This functor is pro-representable by a complete local Noetherian $\cO$-algebra $R^{\Box}_v$. The subfunctor $D_v^{\psi}$ consisting of lifts with determinant $\psi$ is pro-represented by a quotient $R^{\psi,\Box}_v$ of $R^\Box_v$. A \emph{deformation condition at $v$} is a relatively representable subfunctor $\overline{D}^{\psi}_v\subset D^{\psi}_v$ satisfying the dimension conditions in \cite[5.4]{Boecklerbook}  \mar{to be streamlined}. The condition of being in $\overline{D}^{\psi}_v(A)$ is assumed to not depend on the choice of basis of $V_A$. When $v|2$, we also consider some other subfunctors of $D_v\widehat{\otimes}_\mathcal{O} \Lambda(G_{F_v})$ on $\mathfrak{Ar}_{\Lambda(G_{F_v})}$ as in  \cite[1.4.3]{Allen2014}. 

A \emph{deformation problem} is the data of $(\overline{\rho},F,\Sigma\subset S,(\overline{D}^{\psi}_v)_{v\in \Sigma} )$. Given such data, let $\overline{D}_{F,\Sigma,S}^{\psi,\Box}$ be the functor which assigns to $(A,m_A)\in \mathfrak{Ar}_\cO$ the set of isomorphism classes of tuples $(V_A,\iota_A,(\beta_v)_{v\in \Sigma})$, where:
\begin{itemize} \item $V_A$ is a free $A$-module with $G_{F,S}$ action and $\iota_A: V_A/m_A \cong V_\bF$ is an isomorphism of $G_{F,S}$-modules.
\item $\beta_v$ is a basis of $V_A$ such that $\iota_A(\beta_v)=\beta_\bF$. 
\item The lifting of $\overline{\rho}|_{G_{F_v}}$ determined by $(V_A,\iota_A,\beta_v)$ (viewed as a $G_{F_v}$-module by restriction) is in $\overline{D}_v^{\psi}(A)\subset D_v(A)$.
\item The determinant of $V_A$ is given by $\psi$.
\end{itemize}
The functor $\overline{D}_{F,\Sigma,S}^{\psi,\Box}$ is pro-representable and we denote by $\overline{R}^{\psi,\Box}_{F,\Sigma,S}$ the corresponding deformation ring. We define the functor $\overline{D}^{\psi}_{F,\Sigma,S}$ in exactly the same way as $\overline{D}_{F,\Sigma,S}^{\psi,\Box}$, except that we do not add the data of $(\beta_v)$. Because $\overline{\rho}$ is absolutely irreducible, the functor $\overline{D}^\psi_{F,\Sigma,S}$ is pro-representable and we denote by $\overline{R}^{\psi}_{F,\Sigma,S}$ the corresponding deformation ring.

The $E$-points of $\mathrm{Spec}\, \overline{R}^{\psi}_{F,\Sigma,S}$ is precisely the set of deformations $\rho$ of $\overline{\rho}$ to $\cO$ with determinant $\psi$ such that for each $v\in\Sigma$, $\rho_{G_{F_v}}$ satisfies the deformation condition $\overline{D}^\psi_v$. The problem of simultaneous level raising will be reduced to showing that this set is non-empty for suitable deformation conditions. 

Let $\delta= \dim_\bF \mathrm{Ker}(H^0(G_{F,S},(\mathrm{ad}^0\rhobar)^{*})\to \oplus_{v\in S\setminus \Sigma} H^0(G_{F_v},(\mathrm{ad}^0\rhobar)^{*}))$, where the superscript $*$ means Pontryagin dual. Note that $\delta=0$ if $S\setminus \Sigma\neq \emptyset$.
We have the following estimate \cite[Theorem 5.4.1]{Boecklerbook}:
\begin{thm} \label{Krull dim} If $\delta=0$, then $\dim \overline{R}^\psi_{F,\Sigma,S}\geq 1$.
\end{thm}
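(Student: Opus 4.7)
The plan is the standard Galois-cohomological lower bound for the Krull dimension of a deformation ring: present $R := \overline{R}^{\psi}_{F,\Sigma,S}$ as a quotient of a power series ring over $\cO$, bound the number of generators and relations by Selmer groups, and then apply the Greenberg-Wiles product formula. The hypothesis $\delta = 0$ is what compensates for a possible contribution from global-to-local $H^0$ defects at the unrestricted places $v \in S\setminus \Sigma$.

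First I would choose a minimal presentation $R \cong \cO[[x_1,\ldots,x_g]]/J$. Since $\overline{\rho}$ is absolutely irreducible with fixed determinant $\psi$, standard deformation theory identifies the mod $\pi$ tangent space $\mathfrak{m}_R/(\mathfrak{m}_R^2 + \pi)$ with the Selmer group $H^1_{\mathcal{L}}(G_{F,S},\mathrm{ad}^0\overline{\rho})$, where $\mathcal{L}_v \subseteq H^1(G_{F_v},\mathrm{ad}^0\overline{\rho})$ is the tangent space of $\overline{D}^{\psi}_v$ for $v \in \Sigma$ and $\mathcal{L}_v = H^1(G_{F_v},\mathrm{ad}^0\overline{\rho})$ for $v \in S\setminus\Sigma$. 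A Mazur-style obstruction argument combined with Poitou-Tate duality bounds the minimal number of relations by $\dim_{\FF} H^1_{\mathcal{L}^{\perp}}(G_{F,S},(\mathrm{ad}^0\overline{\rho})^*) + \delta$, where $\mathcal{L}^{\perp}_v$ is the annihilator of $\mathcal{L}_v$ under local Tate duality, and the $\delta$-term is precisely the Pontryagin-dual contribution from unrestricted places. Commutative algebra gives
$$\dim R \;\geq\; 1 + \dim_{\FF} H^1_{\mathcal{L}}(G_{F,S},\mathrm{ad}^0\overline{\rho}) - \dim_{\FF} H^1_{\mathcal{L}^{\perp}}(G_{F,S},(\mathrm{ad}^0\overline{\rho})^*) - \delta.$$

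Next I would apply the Greenberg-Wiles Euler characteristic formula, which together with $H^0(G_{F,S},\mathrm{ad}^0\overline{\rho}) = 0$ (from absolute irreducibility) yields
$$\dim H^1_{\mathcal{L}} - \dim H^1_{\mathcal{L}^{\perp}} = -\dim H^0(G_{F,S},(\mathrm{ad}^0\overline{\rho})^*) + \sum_{v\in S}\bigl(\dim \mathcal{L}_v - \dim H^0(G_{F_v},\mathrm{ad}^0\overline{\rho})\bigr).$$
The dimension conditions imposed on each $\overline{D}^{\psi}_v$ in \cite[5.4]{Boecklerbook} are calibrated precisely so that, after invoking the local Euler characteristic formula at the finite places and the archimedean contribution at the infinite places, the right-hand side is at least $\delta + \dim H^0((\mathrm{ad}^0\overline{\rho})^*)$. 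Combining with the previous display and using $\delta = 0$ yields $\dim R \geq 1$.

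The main obstacle is the local bookkeeping, namely verifying that the prescribed dimension of $\overline{D}^{\psi}_v$ at each place contributes exactly what is needed for the Euler characteristic sum to be non-negative. The subtle cases are the places $v \mid 2$, where the deformation functor is taken on $\mathfrak{Ar}_{\Lambda(G_{F_v})}$ and a $[F_v:\QQ_2]$-dimensional contribution appears as in \cite[1.4.3]{Allen2014}, and the archimedean places, where the tangent-space dimension is controlled by the action of complex conjugation. These are exactly the contributions that the axiomatic dimension conditions cited from \cite[5.4]{Boecklerbook} are designed to package.
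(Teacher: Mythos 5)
The paper offers no argument for this theorem at all: it simply invokes \cite[Theorem 5.4.1]{Boecklerbook}, and your outline is precisely the standard generators-and-relations count plus the Greenberg--Wiles formula that underlies that citation. So you have the right strategy. There is, however, one step that is concretely false in the setting of this paper, where the residue characteristic is $2$: you assert $H^0(G_{F,S},\mathrm{ad}^0\overline{\rho})=0$ ``from absolute irreducibility.'' In characteristic $2$ the identity matrix has trace zero, so the scalars sit inside $\mathrm{ad}^0\overline{\rho}$ and this $H^0$ is at least one-dimensional no matter how big the image of $\overline{\rho}$ is. A companion subtlety, for the same reason, is that the tangent space of the fixed-determinant deformation functor is not $H^1_{\mathcal{L}}(G_{F,S},\mathrm{ad}^0\overline{\rho})$ but the quotient $Z^1(\mathrm{ad}^0\overline{\rho})/B^1(\mathrm{ad}\,\overline{\rho})$: every coboundary valued in $\mathrm{ad}\,\overline{\rho}$ is automatically trace-zero, and since $\mathrm{ad}\,\overline{\rho}\neq\mathrm{ad}^0\overline{\rho}+(\mathrm{ad}\,\overline{\rho})^{G_{F,S}}$ this quotient is strictly smaller than $H^1(\mathrm{ad}^0\overline{\rho})$. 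These two characteristic-$2$ corrections push in opposite directions and offset each other in the final count, and the archimedean local terms also behave differently at $\ell=2$ (for a real place $v$ one has $H^1(G_{F_v},\mathrm{ad}^0\overline{\rho})\neq 0$, unlike for odd $\ell$), so the ``calibration'' you defer to the axioms of \cite[5.4]{Boecklerbook} is doing more work than your sketch suggests.

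The net effect is that your two displayed formulas are each off by a characteristic-$2$ defect of the same size, so the conclusion $\dim \overline{R}^{\psi}_{F,\Sigma,S}\geq 1$ survives once the bookkeeping is done honestly --- but as written the Greenberg--Wiles display is not an equality, and the tangent-space identification is not an isomorphism. Since the whole point of this paper is to work at $\ell=2$, you should either track the scalar-matrix contributions explicitly on both sides, or (as the authors do) cite \cite[Theorem 5.4.1]{Boecklerbook}, whose hypotheses on the local conditions are stated exactly so as to absorb these discrepancies.
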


Let us now assume that our deformation problem is of the following form:
\begin{itemize}
\item For $v|\infty$: we let $\overline{D}^\psi_v$ be the subfunctor represented by the quotient of $R^\Box_v$ which is cut out by the equation $\det(\rho(c_v)-X)=X^2-1$, where $c_v$ is the complex conjugation. That is, we look at odd deformations.
\item For $v|2$: Assume $\rhobar$ has a $G_{F_v}$-stable line $L$ such that $G_{F_v}$ acts on $V_\bF/L$ via a character $\overline{\chi}$, and that $\psi$ is a ramified character. Then there is a unique $\cO$-flat quotient $\tilde{R}^{\psi,\Box}_v$ of $R^\Box_v$ such that for any finite extension $E'$ of $E$, an $E'$-point $x$ of $\mathrm{Spec}\, R^\Box_v$ with corresponding representation $\rho_x$ factors through this quotient if and only if $\det \rho_x=\psi$, and $\rho_x$ has a Galois-stable line $L\subset V_x$ such that the Galois action on $V_x/L$ is unramified. In the notation of \cite[1.4.3]{Allen2014}, this is (the $\cO$-flat quotient with same generic fiber of)  $R^{\Delta,\psi}_{\Lambda(G_{F_v})}\otimes _{\Lambda'(G_{F_v})}\cO$, where the homomorphism $\Lambda'(G_{F_v})\to \cO$ is the specialization homomorphism from the character $I_{F_v}\to \Lambda'(G_{F_v})^\times$ to the trivial character. Note that a priori, $R^{\Delta,\psi}_{\Lambda(G_{F_v})}\otimes _{\Lambda'(G_{F_v})}\cO$ need not be a quotient of $R^{\Box}_v$, because it keeps track of the character Galois acts on the line $L$. However, with the the assumption that $\psi$ is ramified, the line $L$ is uniquely determined by the deformation, and hence it is in fact a quotient of $R^{\Box}_v$.
We let $\overline{D}_v^\psi$ be the subfunctor represented by an $\cO$-torsion free quotient of $R^\Box_v$ corresponding to an irreducible component of $\tilde{R}^{\psi,\Box}_v[\frac{1}{2}]$, and  let $\overline{D}_v^{\mathrm{big},\psi}$ be the functor represented by the quotient $R^{\Delta,\psi}_{\Lambda(G_{F_v})}$ of $R^{\psi,\Box}_v\widehat{\otimes}_{\cO} \Lambda(G_{F_v})$ in the category $\mathfrak{Ar}_{\Lambda(G_{F_v})}$. When $\psi=\psi_2$ is the cyclotomic character, the generic fiber of $\tilde{R}^{\psi,\Box}_v$ consists of the following three types of irreducible components: components whose generic $E'$-point gives rise to an extension of the trivial character by $\psi_2$, a quadratic unramified twist of an extension of the trivial character by $\psi_2$, or a crystalline ordinary representation. There is at most one component of the first two types, and possibly more than one component of the third type. This fact, and the fact that $\tilde{R}^{\psi,\Box}_v$ satisfy the dimension requirement of \cite[5.4]{Boecklerbook} follow from the arguments in \cite[2.4]{Kisin2009} and \cite[4.1-4.3]{Snowden}.

\item For $v\in \Sigma$, $v\nmid 2$: Let $R^{\psi,\Box}_v$ be the ring pro-representing the subfunctor of $D_v$ classifying lifts of fixed determinant $\psi$. It is known (\cite[3]{Boecklerbook}, \cite{Pilloni}) that $R^{\psi,\Box}_v[\frac{1}{2}]$ is equidimensional of dimension 3, with smooth irreducible components. The deformation conditions we take are those given by a choice of (union of) irreducible components, that is the subfunctor represented by the unique $\cO$-torsion free quotient of $R^{\psi,\Box}_v$ whose generic fiber is the chosen (union of) components. On each irreducible component, the inertial Weil-Deligne type is constant. Either there is no irreducible component whose inertial Weil--Deligne type is $(1\oplus 1, N\neq 0)$, or there are exactly two of them, which differ by an unramified quadratic twist. In the case these components correspond to the Steinberg representation and its unramified quadratic twist, we call them the \emph{Steinberg component} and the \emph{twisted Steinberg component}.

\end{itemize}
If $F'$ is a totally real finite extension such that $\overline{\rho}|_{G_{F'}}$ is still absolutely irreducible, one can consider the ``base change'' deformation problem by replacing $S$, $\Sigma$ with the set $S'$, $\Sigma'$ of primes in $F'$ above them, and restricting the inertial Weil--Deligne types. If we denote by $\overline{R}^{\psi}_{F',\Sigma',S'}$ the corresponding deformation ring, the argument in Lemma 1.2.3 of \cite{BLGGT} shows that $\overline{R}^{\psi}_{F,\Sigma,S}$ a finite $\overline{R}^{\psi}_{F',\Sigma',S'}$-algebra.
\begin{theorem} \label{Prescribed type} Let $(\overline{\rho},F,\Sigma\subset S,(\overline{D}^{\psi}_v)_{v\in \Sigma} )$ be a deformation problem as above. Assume:
\begin{itemize}
\item $\psi \psi_2^{-1}$ is a finite order character, where $\psi_2$ is the 2-adic cyclotomic character.
\item $S\setminus \Sigma\neq \emptyset$.
\item  For $v\in S\setminus \Sigma$, assume that no component of $R_v^{\psi,\Box}$ has inertial Weil--Deligne type with $N\neq 0$.
\item  For $v\in \Sigma$, assume that $\overline{D}^{\psi}_v$ is given by one component of $R_v^{\psi,\Box}[\frac{1}{2}]$ (or of $\tilde{R}^{\psi,\Box}_v[\frac{1}{2}]$ when $v|2$).
\item $\mathrm{Im}\overline{\rho}$ is dihedral, induced from a quadratic extension $K/F$.
\item If $K$ is imaginary CM, then there is a prime $v|2$ of $F$ which does not split in $K$. 
\end{itemize} 
Then  $\overline{R}^\psi_{F,\Sigma,S}[\frac{1}{2}]\neq 0$. In particular, there is a deformation $\rho:G_{F,S}\to \GL_2(E')$ satisfying the deformation conditions of the deformation problem, with $E'$ a finite extension of $E$. Furthermore, all such deformations are modular.
\end{theorem}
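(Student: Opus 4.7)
The plan is an $R=\mathbb{T}$ style argument, combining a Galois cohomology dimension estimate with Allen's modularity lifting theorem. Since $S\setminus\Sigma\ne\emptyset$, the kernel defining $\delta$ is zero (as noted in the text preceding Theorem \ref{Krull dim}), so Theorem \ref{Krull dim} gives $\dim\overline{R}^{\psi}_{F,\Sigma,S}\ge 1$. If $\overline{R}^{\psi}_{F,\Sigma,S}$ is moreover a finite $\cO$-algebra, then its generic fiber is nonzero and any $E'$-point produces the desired deformation $\rho$; so the whole argument reduces to establishing finiteness over $\cO$.

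The finiteness, together with modularity of every characteristic-zero lift, is exactly what Allen's modularity lifting theorem \cite{Allen2014} supplies in the $2$-adic dihedral setting. The hypotheses of the statement have been arranged to match Allen's: the residual image is induced from $K/F$, the determinant $\psi\psi_2^{-1}$ has finite order, the chosen components at $v\mid 2$ are ordinary (they sit inside $R^{\Delta,\psi}_{\Lambda(G_{F_v})}$), and the assumption that some $v\mid 2$ is non-split in $K$ when $K$ is imaginary CM is precisely the non-CM condition required at $\ell=2$. To feed Taylor--Wiles patching I need an automorphic starting point realizing the prescribed components. Because $\overline{\rho}=\Ind_{G_K}^{G_F}\overline{\chi}$, I would lift $\overline{\chi}$ to an algebraic Hecke character $\chi$ of $K$ whose induction $\rho_0:=\Ind_{G_K}^{G_F}\chi$ has determinant $\psi$, is ordinary at each $v\mid 2$ in the prescribed component, and has the correct inertial Weil--Deligne type at each $v\in\Sigma$ with $v\nmid 2$; then $\rho_0$ is automorphic by automorphic induction on $\GL_2/F$ and furnishes a modular point of $\overline{R}^{\psi}_{F,\Sigma,S}$. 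If some local component cannot be matched directly over $F$ (for example when $v\in\Sigma$ is inert in $K$ and the prescribed component is Steinberg), I would first pass to a suitable solvable totally real extension $F'/F$ splitting the obstructing primes in $K$, construct $\chi$ over $K'=F'\cdot K$, and descend using the finiteness of $\overline{R}^{\psi}_{F,\Sigma,S}$ over $\overline{R}^{\psi}_{F',\Sigma',S'}$ recalled before the statement.

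With the modular starting point in hand, Allen's patching argument identifies the $\cO$-flat quotient of $\overline{R}^{\psi}_{F,\Sigma,S}$ with a finite $\cO$-flat Hecke algebra $\mathbb{T}^{\psi}$; combined with $\dim\overline{R}^{\psi}_{F,\Sigma,S}\ge 1$ this yields $\overline{R}^{\psi}_{F,\Sigma,S}[\tfrac{1}{2}]\ne 0$, and the modularity of every $E'$-point is automatic from the $R=\mathbb{T}$ output. The main obstacle is the construction of the starting automorphic lift realizing all prescribed local types simultaneously, especially at $v\mid 2$: the infinity type and $2$-adic ramification of $\chi$ must be compatible with the chosen irreducible component of $\tilde R^{\psi,\Box}_v$, which may force the preliminary solvable base change described above. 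Once this matching is achieved, the rest is a standard assembly of the dimension estimate, automorphic induction, and Allen's theorem.
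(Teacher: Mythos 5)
Your skeleton is right and matches the paper: Theorem \ref{Krull dim} gives $\dim \overline{R}^{\psi}_{F,\Sigma,S}\geq 1$, and everything reduces to showing $\overline{R}^{\psi}_{F,\Sigma,S}$ is a finite $\cO$-algebra. But the mechanism you propose for finiteness has a gap that you correctly flag as ``the main obstacle'' and then do not actually overcome. You want to seed a Taylor--Wiles patching argument with an automorphic induction $\rho_0=\Ind_{G_K}^{G_F}\chi$ lying on the prescribed local components. This is impossible at the places where the prescribed component is Steinberg (or twisted Steinberg): any representation induced from a character of a quadratic extension has $N=0$ in its local Weil--Deligne representation at every finite place, so $\rho_0$ can never lie on a component with $N\neq 0$. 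Solvable base change does not repair this --- after base change the induced representation is locally a sum of two characters at split primes, still not Steinberg. Worse, demanding an automorphic point on the Steinberg components before running $R=\mathbb{T}$ is circular: producing such a form is exactly the level-raising statement being proved. Separately, at $\ell=2$ with dihedral image the usual patching machinery is precisely what breaks down (this is the reason \cite{Allen2014} exists), so ``Allen's patching argument'' is not an accurate description of the input.

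The paper's route avoids constructing any seed on the prescribed components. After a preliminary solvable base change to $F'$ (to put oneself in the setting of \cite[\S 4.4]{Allen2014}), it invokes Allen's Proposition 4.4.3: \emph{every} prime of the big nearly ordinary deformation ring $\overline{R}^{\mathrm{big},\psi}_{F',S'}$ over the Iwasawa algebra $\Lambda'(G_{F'})$ is pro-modular. This is a Skinner--Wiles style statement that only needs residual modularity, not a characteristic-zero automorphic point of each type. It forces $(\overline{R}^{\mathrm{big},\psi}_{F',S'})^{\mathrm{red}}$ to be a localized Hida Hecke algebra, hence finite over $\Lambda'(G_{F'})$; the weight-2 specialization then shows $\overline{R}^{\psi}_{F',\Sigma',S'}$ is finite over $\cO$, and finiteness of $\overline{R}^{\psi}_{F,\Sigma,S}$ over $\overline{R}^{\psi}_{F',\Sigma',S'}$ (the base-change finiteness recalled before the theorem, which you do cite) descends this to $F$. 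Modularity of every $E'$-point then follows from pro-modularity plus solvable descent. If you replace your automorphic-induction seed with this appeal to pro-modularity of all primes of the Hida-family deformation ring, your argument becomes the paper's.
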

\begin{proof} We indicate how to deduce this from the main result of \cite{Allen2014}. First we replace $F$ by a solvable totally real extension $F'$ as in the proof of Theorem 5.2.1 of loc. cit. This has the effect of making the assumptions in Section 4.4 there hold.
Let us denote by $\overline{R}^{\mathrm{big},\psi}_{F',S'}$ the deformation ring defined as in \cite[p.1316]{Allen2014}. It is the deformation ring representing the deformation functor $\overline{D}^{\mathrm{big},\psi}_{F',S',S'}$, which is defined as the deformation problem in the category of local Artinian $\Lambda(G_{F'})=\hat{\otimes}_{v|2}\Lambda(G_{F'_v})$-algebra such that the local deformation condition at $v|2$ is given by $\overline{D}_v^{\mathrm{big},\psi}$, and the local deformation conditions at the other places are given by the component that contains the image under the map of local lifting rings of the components in the definition of $\overline{R}^\psi_{F,\Sigma,S}$. By enlarging $F'$ if needed, the local deformation conditions at the finite places in $\Sigma'$ obtained this way is either the unramified or an unramified twist of Steinberg component, while the deformation conditions at the places in $S'\setminus \Sigma'$ become the unramified component. This shows that we are indeed in the setting of loc. cit. Let $\Lambda'(G_{F'})=\hat{\otimes}_{v|2}\Lambda'(G_{F'_v})$. There is a homomorphism $\phi:\Lambda'(G_{F'})\to \cO$ (``weight 2 specialization'') such that there is a surjection $\overline{R}^{\mathrm{big},\psi}_{F',S'}\otimes_{\Lambda'(G_{F'}),\phi}\cO\onto \overline{R}^{\psi}_{F,\Sigma',S'} $.

Now by Proposition 4.4.3 of \cite{Allen2014}, every prime of $\overline{R}^{\mathrm{big},\psi}_{F',S'}$ is pro-modular, and hence $(\overline{R}^{\mathrm{big},\psi}_{F',S'})^{\mathrm{red}}$ is identified with a localized Hida Hecke algebra of $F'$. In particular $(\overline{R}^{\mathrm{big},\psi}_{F',S'})^{\mathrm{red}}$ is a finite $\Lambda'(G_{F'})$-algebra. Because $\overline{R}^{\mathrm{big},\psi}_{F',S'}$ is Noetherian, $\overline{R}^{\mathrm{big},\psi}_{F',S'}$ is also a finite $\Lambda'(G_{F'})$-algebra. But this implies $\overline{R}^{\psi}_{F',\Sigma',S'}$ is a finite $\cO$-algebra, and hence $\overline{R}^{\psi}_{F,\Sigma,S}$ is a finite $\cO$-algebra. Because $\dim \overline{R}^{\psi}_{F,\Sigma,S} \geq 1$ by Theorem \ref{Krull dim}, this forces $\overline{R}^{\psi}_{F,\Sigma,S}[\frac{1}{2}]\neq 0$. The residue fields at each maximal ideals of this ring are finite extensions of $E$, whose points give rise to the desired characteristic 0 deformations $\rho$. Furthermore, the argument above shows that after restriction to $F'$, any such $\rho$ comes from the specialization at weight $2$ of a Hida Hecke algebra, and hence $\rho|_{G_{F'}}$ is modular. By solvable base change, $\rho$ is also modular.
\end{proof}
We now apply this to prove Theorem \ref{thm:levelraising} when $E$ is ordinary at 2. We choose our deformation problem as follows:
\begin{itemize}
\item $\overline{\rho}=\overline{\rho}_{E,2}:G_\QQ\to \GL_2(\FF_2)$ is the mod 2 representation of the elliptic curve $E$.
\item $\psi=\psi_2$ is the 2-adic cyclotomic character.
\item $\Sigma$ consists of the finite primes dividing $Nq_1\cdots q_m$ as well as $\infty$.
\item For $p||N$ and $p\neq 2$, the deformation condition $\overline{D}^{\psi}_p$ is given by the Steinberg component (resp. twisted Steinberg component) if $\epsilon_p=+1$ (resp. $\epsilon_p=-1$).
\item For $p^2|N$, the deformation condition $\overline{D}^{\psi}_p$ is given by the unique component of $R^{\psi,\Box}_v[\frac{1}{2}]$ that contains $\rho_{E,2}|_{G_{\QQ_p}}$.
\item At $v|2\infty$, the deformation conditions are chosen to be $\overline{D}^\psi_v$ as below Theorem \ref{Krull dim}. Note that we are in the situation dealt there because $E$ was assumed to be ordinary at 2. For $v|2$, we choose the crystalline ordinary component if $E$ has good reduction at $2$. If $E$ has multiplicative reduction at $2$, we choose the component that either contains $\rho_{E,2}|_{G_{\QQ_2}}$ or contains its unramified quadratic twist depending on the chosen sign $\epsilon_2$.
\item At $q_i$, the deformation condition is given by either the Steinberg component or the twisted Steinberg component, depending on the sign $\varepsilon_i$.
\item $S=\Sigma\cup \{q_0\}$ where $q_0$ is an auxiliary prime as in Definition \ref{Auxiliary prime definition}.
\end{itemize}

By Remark \ref{rem:2splits}, we know that $2$ does not split in $K=\mathbb{Q}(\sqrt{\Delta})$. This together with Lemma \ref{Auxiliary prime} shows that the hypotheses of Theorem \ref{Prescribed type} holds. Thus we get a modular deformation of $\overline{\rho}$ which corresponds to a weight 2 newform $g=\sum b_n q^n$ (with associated automorphic representation $\pi=\otimes \pi_p$) such that:
\begin{itemize}
\item $\pi$ has trivial central character.
\item For $p|2N$, the conductor of $\pi_p$ is equal to $\ord_p(N)$. If $p||N$, $\pi_p$ is Steinberg or the unramified quadratic twist of Steinberg depending on $a_p=1$ or $-1$, thus $b_p=a_p$ for such $p$.
\item For $i>0$, $\pi_{q_i}$ is Steinberg or the unramified quadratic twist of Steinberg depending on $\varepsilon_i=1$ or $-1$. Thus $b_{q_i}=\varepsilon_i$.
\end{itemize}

\subsection{Proof of Theorem \ref{thm:levelraising} in the ordinary case}\label{sec:ordinarycaseproof}
By construction, $q_0\equiv3\bmod4$ and $\overline{\rho}(\Frob_{q_0})$ has order 3 (hence has distinct eigenvalues). Applying Corollary \ref{Level raising form at auxiliary prime}, it follows that our level raised form $g$ is either unramified at $q_0$ or its twist $g\otimes \chi_{q_0}$ is unramified at $q_0$, where $\chi_{q_0}$ is the unique quadratic character that is ramified at $q_0$ and unramified everywhere else. In the former case, the form $g$ satisfies the conclusion of Theorem \ref{thm:levelraising}. In the latter case, $g\otimes \chi_{q_0}$ has the desired conductor, so we only need to check the matching of the signs at primes $p$ where the conductor is 1. But such a prime $p$ either satisfy $p||N$ or $p=q_i$ ($i>0$). Since twisting by $\chi_{q_0}$ changes the sign $\varepsilon$ at $p$ to $\varepsilon \chi_{q_0}(\Frob_p)=\varepsilon \legendre{p}{q_0}=\varepsilon$ if $p\neq p_1$, we see that $g\otimes \chi_{q_0}$ satisfies the conclusion of Theorem \ref{thm:levelraising}. This finishes the proof in the ordinary case.

\section{Simultaneous level raising: supersingular case}
\label{Simultaneous level raising: supersingular case}
Let $D$ be a quaternion algebra over $\QQ$. We denote by $G_D$ the $\QQ$-algebraic group $D^\times$, $Z\cong \GG_m$ its center and $\Sigma(D)$ the set of primes where $D$ is ramified. Assume $2\notin \Sigma(D)$. Let $\nu_D:G_D\to \GG_m$ be the reduced norm map. Fix a maximal order $\cO_D$ of $D$, and fix once and for all an isomorphism between $\cO_D\otimes \mathbb{Z}_p\cong M_2(\ZZ_p)$ for each place $p\notin \Sigma(D)$. This determines an isomorphism $G_D(\QQ_p)\cong \GL_2(\QQ_p)$.

Given an open subgroup $U$ of $G_D(\mathbb{A})$ of the form $\prod U_p$, such that the set $S$ of primes such that $U_p\neq \GL_2(\ZZ_p)$ is finite, we have the abstract Hecke algebra $\TT=\ZZ[\{T_p, S_p\}_{p\notin S}]$ (this depends on $U$ through the set $S$, though this dependence is not in the notation).  A maximal ideal $\mathfrak{m}\subset \TT$ is called \emph{Eisenstein} if there exists some positive integer $d$ such that $T_p-2\in \mathfrak{m}$ for all but finitely many primes $p=1 \bmod d$.

Let $\Iw_1(p^n)$ (resp., $\Iw(p^n)$) be the subgroup of $\GL_2(\mathbb{Z}_p)$ consisting of matrices which are upper triangular unipotent (resp. upper triangular) mod $p^n$. If $U=\prod U_p$ is an open subgroup, and $p$ is a prime such that $U_p=\GL_2(\ZZ_p)$. We denote by $U_0(p)$ the open subgroup of $U$ which agrees with $U$ away from $p$, and $U_0(p)_p=\Iw(p)\subset U_p$.

\subsection{Quaternionic forms: definite case}  \label{sec:defcase}
Throughout this section assume $D$ is definite. As in \cite{Allen2014}, for each $\Sigma'\subset \Sigma(D)$, a $(\Sigma'\subset \Sigma(D))$-open subgroup $U\subset G_D(\AA^{\infty})$ is a subgroup of the form $U=\prod_p U_p$ such that:
\begin{itemize}
\item $U_p\subseteq \GL_2(\ZZ_p)$ for $v\notin \Sigma(D)$, via our chosen identification. Equality holds for almost all $p$.
\item $U_p=G_D(\QQ_p)=D_p^\times$ for $p\in \Sigma'$.
\item $U_p=(\cO_D \otimes \mathbb{Z}_p)^\times$ for $p\in \Sigma(D)\setminus \Sigma'$.
\end{itemize}
Note that an $(\emptyset\subset \Sigma(D))$-open subgroup is an open compact subgroup. 

Let $\gamma=(\gamma_p)_{p\in \Sigma'}$ be a tuple of unramified characters $\gamma_p:\QQ_p^\times \to \mu_2$. This determines a character of $\gamma:G_D(\AA^{\infty})\to \mu_2$ given by composing the projection to $\prod_{p\in \Sigma'} G_D(\QQ_p)$ and $\prod_{p\in \Sigma'} \gamma_p\circ \nu_D$. If $A$ is a topological $\ZZ_2$-module, define $S_{\gamma}(U,A)$ to be the space of functions
\[f: G_D(\QQ)\setminus G_D(\AA^{\infty})/UZ(\AA^{\infty})\to A\]
such that $f(gu)=\gamma(u)f(g).$
Because $D$ is definite, there exists $t_1,\cdots t_n\in G_D(\AA^{\infty})$ such that $G_D(\QQ)\setminus G_D(\AA^{\infty})/UZ(\AA^{\infty})=\coprod G_D(\QQ)t_iUZ(\AA^{\infty})$, and this gives the identification
\[S_{\gamma}(U,A)\cong \oplus_{i=1}^n A^{\gamma((UZ(\AA^{\infty})\cap t_i^{-1}G_D(\QQ)t_i)/Z(\mathbb{Q}))}.\]
Here we view $A$ as a $\mu_2$-module via its $\ZZ_2$-module structure, and the superscript means taking invariants.
In particular, if $(UZ(\AA^{\infty})\cap t_i^{-1}G_D(\QQ)t_i)/Z(\mathbb{Q})=1$ (or if $\gamma$ is trivial) then $S_{\gamma}(U,A)=S_{\gamma}(U,\ZZ_2)\otimes_{\ZZ_2} A$. Without any assumption on $U$, this holds if $A$ is $\ZZ_2$-flat.

\begin{lem}  \label{Sufficiently small}Fix a prime $p\notin \Sigma(D)$. Let $U$ be a $(\Sigma'\subset \Sigma(D))$-open subgroup.  If $U_p\subset \Iw_1(p^n)$ for $n$ large enough (depending only $p$), then $UZ(\AA^{\infty})\cap t^{-1}G_D(\QQ)t)/Z(\mathbb{Q})=1$ for any $t\in G_D(\AA^\infty)$.
\end{lem}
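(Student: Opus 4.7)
The plan is to use the bijection $\gamma \mapsto t\gamma t^{-1} =: \delta$ to rewrite $\Gamma := (UZ(\AA^\infty)\cap t^{-1}G_D(\QQ)t)/Z(\QQ) \cong (tUt^{-1}Z(\AA^\infty) \cap G_D(\QQ))/Z(\QQ)$. First I would observe that $\Gamma$ is finite: since $D$ is definite, $G_D(\RR)/Z(\RR)$ is compact, so $G_D(\QQ)/Z(\QQ)$ sits discretely in $G_D(\AA^\infty)/Z(\AA^\infty)$, and its intersection with the compact-open subset $tUt^{-1}Z(\AA^\infty)/Z(\AA^\infty)$ is finite.

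Next I would bound the exponent of $\Gamma$ by $12$. If $\bar\delta \in \Gamma$ is nontrivial, then $\delta \in G_D(\QQ)\setminus Z(\QQ)$, so $\QQ(\delta) \subset D$ is a quadratic subfield; since $D$ is definite, the embedding $\QQ(\delta)\otimes\RR \hookrightarrow \HH$ forces $\QQ(\delta)$ to be imaginary quadratic. Finiteness of $\Gamma$ gives $\delta^N \in \QQ^\times$ for some $N$, and then $\delta/\sigma(\delta)$ is a root of unity in $\QQ(\delta)^\times$, where $\sigma$ is the nontrivial Galois involution. Since roots of unity in imaginary quadratic fields have order dividing $12$, we conclude $\delta^{12} \in \QQ^\times$, i.e., $\bar\delta$ has order dividing $12$ in $\Gamma$.

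The heart of the proof is the following technical claim: for $n$ large enough depending only on $p$, any $u \in \Iw_1(p^n)$ whose $12$-th power is central in $\GL_2(\QQ_p)$ is itself central. Granting this and writing $\gamma = uz$ with $u \in U$, $z \in Z(\AA^\infty)$, the relation $u_p^{12} = \delta^{12} z_p^{-12} \in Z(\QQ_p)$ shows $u_p$ is scalar. Consequently $\delta_p = t_p u_p t_p^{-1} z_p = u_p z_p$ is scalar in $\GL_2(\QQ_p)$, and since any element of $D \subset D\otimes\QQ_p = M_2(\QQ_p)$ that is scalar at a single place must be central in $D$, we get $\delta \in Z(\QQ)$, contradicting $\bar\delta \neq 1$. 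Hence $\Gamma = 1$.

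The main obstacle is the technical claim, which I would prove by a direct eigenvalue computation. The characteristic polynomial of $u \in \Iw_1(p^n)$ is congruent to $(X-1)^2 \bmod p^n$, so both eigenvalues $\mu_1, \mu_2 \in \overline{\QQ}_p$ satisfy $v_p(\mu_i - 1) \geq n/2 - O_p(1)$. If $u^{12} = \lambda I$, then $\zeta := \mu_1/\mu_2$ is a $12$-th root of unity with $v_p(\zeta - 1) \geq n/2 - O_p(1)$ (using that $\det u \in \ZZ_p^\times$, so $v_p(\mu_i) = 0$). A brief case check of $12$-th roots of unity in $\overline{\QQ}_p$ shows $v_p(\zeta - 1)$ is bounded above by an absolute constant for any nontrivial $\zeta$ (the worst ramified contributions occur at $p = 2, 3$), so for $n$ sufficiently large we must have $\zeta = 1$, i.e., $\mu_1 = \mu_2$. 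The nonscalar Jordan block $\left(\begin{smallmatrix}\mu & 1\\ 0 & \mu\end{smallmatrix}\right)$ has $12$-th power with nonzero off-diagonal entry $12\mu^{11}$, so this case is excluded, forcing $u$ to be scalar.
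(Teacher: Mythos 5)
Your proposal is correct, and it supplies a complete argument where the paper simply defers to Lemma 2.1.5 of \cite{Allen2014}; what you have written is essentially a self-contained reconstruction of that standard proof (torsion elements of $tUt^{-1}Z(\AA^\infty)\cap G_D(\QQ)$ modulo centre generate imaginary quadratic fields, roots of unity there have order dividing $12$, and depth at the auxiliary place $p$ rules out nontrivial elements of bounded order). The chain of reductions is sound: the conjugation trick, the deduction that $\QQ(\delta)\otimes\RR\cong\CC$ because $\RR\times\RR$ does not embed in $\HH$, the passage from $\delta^N\in\QQ^\times$ to $\delta/\sigma(\delta)\in\mu_{12}$ and hence $\delta^{12}\in\QQ^\times$, and the final globalization (an element of $D$ that becomes scalar in $D\otimes\QQ_p\cong M_2(\QQ_p)$ lies in $\QQ$). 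The eigenvalue computation is also right: the characteristic polynomial of $u\in\Iw_1(p^n)$ is $(X-1)^2$ mod $p^n$, giving $v_p(\mu_i-1)\geq n/2$, while for a nontrivial $\zeta\in\mu_{12}$ one has $v_p(\zeta-1)\leq 1$ (worst case $\zeta=-1$ at $p=2$), so $n\geq 3$ already forces $\mu_1=\mu_2$, and the Jordan-block check excludes the non-semisimple case. Two small points worth making explicit if you write this up: (i) in the finiteness step, $U_p=D_p^\times$ is \emph{not} compact for $p\in\Sigma'$, so you should note that $D_p^\times/\QQ_p^\times$ is compact for ramified $p$ (being the unit group of a $p$-adic division algebra modulo centre) before asserting that $tUt^{-1}Z(\AA^\infty)/Z(\AA^\infty)$ is compact; (ii) when the characteristic polynomial of $u_p$ splits over $\QQ_p$, the two valuations $v_p(\mu_i-1)$ need not be equal, but the ultrametric inequality applied to their sum and product still yields the bound $n/2$ for each, as you implicitly use.
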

\begin{proof} This is Lemma 2.1.5 of \cite{Allen2014}.
\end{proof}

\begin{definition}\label{def:sufsmall}
We call a subgroup $U$ satisfying the conclusion of the lemma \textit{sufficiently small}.  
\end{definition}

Let $S$ be the set of primes such that $U_p\neq \GL_2(\ZZ_p)$. The abstract Hecke algebra $\TT=\ZZ[\{T_p, S_p\}_{p\notin S}]$ acts on $S_{\gamma}(U, A)$ through the usual double coset operators $T_p$, $S_p$. Denote by $\TT(\gamma,U,A)$ the quotient of $\TT$ that acts faithfully on $S_{\gamma}(U,A)$. The subspace $S_{\gamma}(U,A)^{\mathrm{triv}}$ consisting of functions that factor through $\nu_D$ is stable under $\TT$. Fix an embedding $\mathbb{Q}_2\hookrightarrow \mathbb{C}$. The Jacquet--Langlands correspondence gives a $\TT$-equivariant isomorphism 
\[(S_\gamma(U,\ZZ_2)/S_\gamma(U,\ZZ_2)^{\mathrm{triv}}) \otimes_{\ZZ_2} \CC \cong \bigoplus \pi^V. \]
Here $V\subseteq \GL_2(\AA^\infty)$ is the open compact subgroup such that  $V_p=U_p$ if $p\notin \Sigma(D)$, and $V_p=\Iw(p)$ if $p\in \Sigma(D)$. The sum runs over $\pi$ such that 
\begin{itemize} 
\item $\pi$ is an algebraic automorphic representations of $\GL_2(\AA)$ such that $\pi_\infty$ is discrete series with trivial infinitesimal character (i.e. $\pi$ corresponds to a modular form of weight 2) and trivial central character.
\item For $p\in \Sigma(D)$, the local representation $\pi_p$ is an unramified twist of the Steinberg representation $\mathrm{St}$. If $p\in \Sigma'$, then $\pi_p\cong \gamma_p\otimes \mathrm{St}$.
\end{itemize}
It follows that $\TT(\gamma,U,\ZZ_2)\otimes \overline{\QQ}_2$ is a product of fields, and each homomorphism $$\TT\onto \TT(\gamma,U,\ZZ_2) \to \overline{\ZZ}_2$$ corresponds to the system of Hecke eigenvalues of a modular form of weight $2$ whose automorphic representation satisfies the above condition. We say that such a system of Hecke eigenvalues \emph{occurs in $S_\gamma(U,\overline{\ZZ}_2)$}. Given a maximal ideal $\mathfrak{m}$ of $\TT$ corresponding to a homomorphism $\overline{\theta}:\TT\to \overline{\FF}_2$, there exists a modular form $g$ whose system of Hecke eigenvalues is congruent to $\overline{\theta}$ is equivalent to $\mathfrak{m}$ being in the support of $S_\gamma(U,\overline{\ZZ}_2)/S_\gamma(U,\overline{\ZZ}_2)^{\mathrm{triv}}$, or equivalently $(S_\gamma(U,\overline{\ZZ}_2)/S_\gamma(U,\overline{\ZZ}_2)^{\mathrm{triv}})_\mathfrak{m}\neq 0$. If $\mathfrak{m}$ correspond to the mod 2 reduction of a system of Hecke eigenvalues of a modular form, then $\mathfrak{m}$ is Eisenstein if and only if the associated mod 2 representation $\rho_\mathfrak{m}:G_\mathbb{Q}\rightarrow \GL_2(\mathbb{F}_2)$ is reducible (\cite[Prop. 2]{Diamond1994}). One knows that the Hecke action on $S_\gamma(U,\overline{\ZZ}_2)^{\mathrm{triv}}$ is Eisenstein, and hence if $\mathfrak{m}$ is non-Eisenstein, $(S_\gamma(U,\overline{\ZZ}_2)/S_\gamma(U,\overline{\ZZ}_2)^{\mathrm{triv}})_\mathfrak{m}\neq 0$ is equivalent to $S_\gamma(U,\overline{\ZZ}_2)_\mathfrak{m}\neq 0$.

Now suppose $U$ is a $(\Sigma'\subset \Sigma(D))$-open subgroup and $p$ is a prime such that $U_p=\GL_2(\ZZ_p)$. Recall that $U_0(p)$ the $(\Sigma'\subset \Sigma(D))$-open subgroup of $U$ which agrees with $U$ away from $p$, and $U_0(p)_p=\Iw(p)\subset U_p$. We say that a system of Hecke eigenvalues in $\overline{\ZZ}_2$ that occurs in $S_\gamma(U_0(p),\overline{\ZZ}_2)$ but not in $S_\gamma(U,\overline{\ZZ}_2)$ is \emph{$p$-new}. Under the Jacquet--Langlands correspondence, it corresponds to an automorphic representation whose component at $p$ is an unramified twist of the Steinberg representation.

We have the following level raising result:
\begin{lemma} \label{Level raising definite} Let $U$ be an $(\emptyset\subset \Sigma(D))$-open subgroup that is sufficiently small, and $U_p=\GL_2(\ZZ_p)$. Suppose $\mathfrak{m}$ is a maximal ideal of $\TT$ in the support of $S(U,\ZZ_2)$, and that $T_p \in \mathfrak{m}$. Then there exists a $p$-new system of Hecke eigenvalues lifting $\mathfrak{m}$.
\end{lemma}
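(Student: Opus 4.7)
The plan is to follow the classical Ribet–Ihara strategy for level raising, adapted to the definite quaternion setting and residue characteristic~$2$; this adaptation is well-known to experts (cf.\ \cite{Kisin2009}). The setup is the natural short exact sequence of Hecke modules
\begin{equation*}
0 \to S(U, \ZZ_2)^2 \xrightarrow{(\alpha, \beta)} S(U_0(p), \ZZ_2) \to C \to 0,
\end{equation*}
where $\alpha, \beta$ are the two degeneracy maps at $p$ (pullback along $U_0(p) \subset U$ at $p$, and the twist by $\mathrm{diag}(p, 1) \in G_D(\QQ_p)$ respectively), and $C$ is the cokernel, which serves as an integral model for the $p$-new part of $S(U_0(p), \ZZ_2)$. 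Via the Jacquet--Langlands correspondence, a $p$-new system of Hecke eigenvalues lifting $\mathfrak{m}$ exists iff $C_\mathfrak{m} \otimes_{\ZZ_2} \QQ_2 \ne 0$, so this is the goal.

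The first step is Ihara's lemma in the definite setting: $(\alpha, \beta)$ is injective after localization at a non-Eisenstein $\mathfrak{m}$. In the sufficiently-small definite setting, both $S(U, \ZZ_2)$ and $S(U_0(p), \ZZ_2)$ are free $\ZZ_2$-modules with explicit double-coset descriptions from Section~\ref{sec:defcase}, and Ihara's lemma reduces to a direct combinatorial analysis; the ideal $\mathfrak{m}$ is non-Eisenstein in our intended application because Assumption~\ref{ass:main}(2) ensures that $\bar\rho_\mathfrak{m}$ is absolutely irreducible. The second step computes $U_p$ on the $p$-old subspace: it satisfies $U_p^2 - T_p U_p + p = 0$ (trivial central character gives $S_p = 1$). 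Under $T_p \in \mathfrak{m}$ and $p$ odd, this reduces modulo $\mathfrak{m}$ to $(U_p - 1)^2 \equiv 0$, so $\widetilde{\mathfrak{m}} := (\mathfrak{m}, U_p - 1) \subset \TT(U_0(p))$ is the unique maximal ideal above $\mathfrak{m}$ supported on the old subspace.

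To conclude that $C_{\widetilde{\mathfrak{m}}}$ has strictly positive $\ZZ_2$-rank, I would appeal to the Jacquet--Langlands dimension formula: the excess $\dim_{\QQ_2}\bigl(S(U_0(p), \ZZ_2)_{\widetilde{\mathfrak{m}}} \otimes \QQ_2\bigr) - 2\dim_{\QQ_2}\bigl(S(U, \ZZ_2)_{\mathfrak{m}} \otimes \QQ_2\bigr)$ equals the dimension of the space of $p$-new automorphic representations with Hecke eigenvalues reducing to $\mathfrak{m}$. Positivity of this excess can be obtained by constructing a Steinberg-type local Galois deformation of $\bar\rho_\mathfrak{m}$ at $p$ (possible precisely because $T_p \in \mathfrak{m}$ ensures the local compatibility $\mathrm{tr}\,\bar\rho_\mathfrak{m}(\mathrm{Frob}_p) = 0$) and invoking a modularity lifting theorem in the spirit of \cite{Allen2014}; alternatively it follows from the standard Ihara-dual/congruence-ideal argument using that $\det \left(\begin{smallmatrix} p+1 & T_p \\ T_p & p+1 \end{smallmatrix}\right) = (p+1)^2 - T_p^2 \in \mathfrak{m}$ together with Ihara-injectivity.

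The hard part is the delicate integral bookkeeping at residue characteristic~$2$: both the mod-$\mathfrak{m}$ degeneration $(U_p - 1)^2 \equiv 0$ and the discriminant $T_p^2 - 4p$ lie a priori in $(2)$, so distinguishing genuine rank contributions (corresponding to honest characteristic-zero $p$-new forms) from potential $2$-torsion artifacts of the mod~$2$ reduction requires care. This is handled by the standard argument in \cite{Kisin2009}.
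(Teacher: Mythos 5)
Your proposal is essentially the argument that the paper invokes by citing Kisin's Lemma 3.3.3, and it is the same mechanism the paper spells out explicitly in the indefinite case (Lemma \ref{Level raising indefinite}): Ihara injectivity mod $2$ at a non-Eisenstein $\mathfrak{m}$, plus the fact that $i_*i^*=\left(\begin{smallmatrix} p+1 & T_p \\ S_p^{-1}T_p & p+1\end{smallmatrix}\right)$ lies entirely in $\mathfrak{m}$ because $p+1$ is even and $T_p\in\mathfrak{m}$. Two remarks. First, of the two routes you offer for the key positivity step, only the second (Ihara plus the degeneracy matrix) is viable: the first route, producing a Steinberg deformation at $p$ and invoking a modularity lifting theorem, is precisely what is unavailable here --- this lemma is used in the supersingular-at-$2$ case, where the paper explicitly abandons the $R=\mathbb{T}$ strategy for lack of such theorems (and where $\overline{\rho}$ has dihedral image), so that route should not be presented as the primary one. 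Second, the ``delicate integral bookkeeping'' you defer to Kisin in the last paragraph is not actually an outstanding issue: once $i^*$ is injective mod $2$ after localizing at $\mathfrak{m}$, the cokernel $C_\mathfrak{m}$ is $2$-torsion-free (since $S(U_0(p),\ZZ_2)$ is $\ZZ_2$-free, $\mathrm{Tor}_1$ vanishes and $C_\mathfrak{m}[2]$ injects into $\ker(i^*\bmod 2)_\mathfrak{m}=0$), so if $C_\mathfrak{m}\otimes\QQ_2=0$ then $C_\mathfrak{m}=0$, $i^*_\mathfrak{m}$ and its adjoint are isomorphisms, and $i_*i^*\equiv 0\bmod\mathfrak{m}$ contradicts surjectivity by Nakayama; no further analysis of $(U_p-1)^2\equiv 0$ or of the discriminant $T_p^2-4p$ is needed. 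You are right to flag that the lemma as stated omits the non-Eisenstein hypothesis, which Ihara's lemma requires and which holds in the paper's application by Assumption \ref{ass:main}(2).
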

\begin{proof} This is a reformulation of Lemma 3.3.3 of \cite{Kisin2009}.
\end{proof}
\subsection{Quaternionic forms: indefinite case}
Throughout this section $D$ is assumed to be indefinite and not split. Let $U\subset G_D(\AA^\infty)$ be an open compact subgroup. The double coset space
\[G_D(\QQ)\setminus \HH^{\pm}\times G_D(\AA^\infty)/U\]
is naturally the complex points of an algebraic curve $X_U$, which is in fact defined over $\QQ$. 

Following \cite{Diamond1994}, for $N$ not divisible by any prime of $\Sigma(D)$, let $V_1(N)$ denote the open compact subgroup such that 
\begin{itemize}
\item  For $p\in \Sigma(D)$, $V_1(N)_p=(\cO_D\otimes \ZZ_p)^\times$.
\item For $p|N$, $V_1(N)_p\subseteq \GL_2(\ZZ_p)$ consists of matrices whose mod $p$ reduction is $\left(\begin{smallmatrix} * & * \\0 & 1\end{smallmatrix}\right) $.
\item $V_1(N)_p=\GL_2(\ZZ_p)$ otherwise.
\end{itemize}
Then we say $U$ is \emph{sufficiently small} if $U\subset V_1(N)$ for some $N\geq 4$.  If $U$ is sufficiently small, then $X_U$ is naturally the moduli space of false elliptic curves $(A,i)$ with level structure (see \cite[Section 3, 4]{Diamond1994}).

For the rest of this section, we will let $U=V_1(q)$ for some suitable prime $q>3$. In particular, such $U$ is sufficiently small, and that $\nu_D(U)=\hat{\ZZ}^\times$. Suppose $p$ is a prime away from $\Sigma(D)\cup \{q\}$. There are two natural \'{e}tale projection maps $\pi_1, \pi_2: X_{U_0(p)}\to X_U$, which gives the Hecke correspondence $T_p$ at $p$. The abstract Hecke algebra $\TT$ consisting of Hecke operators $T_l$, $S_l$ (for $l$ such that $U_0(p)_l=\GL_2(\ZZ_l)$) acts on the whole situation by \'{e}tale correspondences, and hence induces endomorphisms on \'{e}tale cohomology groups. Because $U_0(p)$ has full level at $2$, this picture makes sense over $\ZZ_2$. 
\mar{$S_p$=diamond operators. They occur because of the $V_1(N)$ level structure}
We have the following diagram:
\[\xymatrix{ H^1_{\et}(X_U,\ZZ_2)^2 \ar[r]^{i^*} & H^1_{\et}(X_{U_0(p)},\ZZ_2) \ar[r]^{i_*} &H^1_{\et}(X_U,\ZZ_2)^2} \]
where $i^*=\pi_1^*+\pi_2^*$ and $i_*=\pi_{1*}+\pi_{2*}$. One computes that the composition $i_*i^*$ has the form 
\[\begin{pmatrix} p+1 & T_p \\ S_p^{-1}T_p &p+1 \end{pmatrix}.\]
We have the following facts: 
\begin{itemize}
\item $H^1_{\et}(X_U,\ZZ_2)$, $H^1_{\et}(X_{U_0(p)},\ZZ_2)$ are torsion-free, and carry a perfect alternating pairing given by Poincare duality.
\item $i_*$ is the adjoint of $i^*$ with respect to the pairings.
\item $i^*$ is injective after inverting $2$.
\end{itemize}
By the Jacquet--Langlands correspondence, the system of Hecke eigenvalues $\TT\to \overline{\ZZ}_2$ that occurs in $H^1_{\et}(X_U,\ZZ_2)$ are exactly those of automorphic representations $\pi$ of $\GL_2(\AA)$  such that
\begin{itemize}
\item $\pi_\infty$ is discrete series with trivial infinitesimal character (i.e. $\pi$ corresponds to a modular form of weight 2).
\item For $l\in\Sigma(D)$, $\pi_l$ is an unramified twist of the Steinberg representation.
\item $\pi^{U}\neq 0$.
\end{itemize} 
A system of Hecke eigenvalues that occurs in the cokernel of $i^*$ will correspond exactly to an automorphic representation $\pi$ as above which is furthermore \emph{$p$-new}, i.e. that $\pi_p$ is an unramified twist of the Steinberg representation. For a maximal ideal $\mathfrak{m}$, $\pi$ contributes to $H^1_{\et}(X_{U},\ZZ_2)_\mathfrak{m}$ if and only if the system of Hecke eigenvalues of $\pi$ is congruent to the one given by $\mathfrak{m}$.

The following fact (``Ihara's lemma'') is the key input to level raise in this setting:
\begin{lemma} \label{lem:Ihara} Let $\mathfrak{m}$ be a non-Eisenstein maximal ideal of $\TT$ which comes from a modular form $g$ of level prime to 2. Assume that $\rho_\mathfrak{m}|_{G_{\QQ_2}}$ is supersingular. Then the localization of $i^*$ at $\mathfrak{m}$ is injective mod 2.
\end{lemma}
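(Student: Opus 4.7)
I would follow the strategy of the classical proof of Ihara's lemma for Shimura curves at odd primes, namely, translating the injectivity question into integral $2$-adic Hodge theory. Because $U_0(p)$ has level prime to $2$, both $X_U$ and $X_{U_0(p)}$ admit smooth proper models over $\ZZ_{(2)}$, and the degeneracy maps $\pi_1, \pi_2$ extend to \'{e}tale maps over $\ZZ_{(2)}$. Consequently, as $G_{\QQ_2}$-modules, the mod $2$ cohomologies arise from the $2$-torsion of the N\'{e}ron models of the Jacobians; since they have Hodge--Tate weights in $\{0,1\}$, they are controlled by the associated Fontaine--Laffaille (equivalently, Dieudonn\'{e}) modules over $\FF_2$, and the morphism $i^*$ is induced by a morphism of such modules.

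Suppose for contradiction that $K := \ker((i^*)_\mathfrak{m} \bmod 2) \neq 0$. Since $\mathfrak{m}$ is non-Eisenstein and $\rho_\mathfrak{m}$ is absolutely irreducible, every Jordan--H\"{o}lder factor of $K$ is isomorphic to $\rho_\mathfrak{m}$. Upon restriction to $G_{\QQ_2}$, the supersingular hypothesis forces $\rho_\mathfrak{m}|_{G_{\QQ_2}}$ to be irreducible; on the Fontaine--Laffaille side this translates into the Frobenius on the $\mathfrak{m}$-isotypic component having no eigenline. Combining this with the Hecke relation $i_* i^* = \bigl(\begin{smallmatrix} p+1 & T_p \\ S_p^{-1} T_p & p+1 \end{smallmatrix}\bigr)$ and the fact that $T_p$ lies in $\mathfrak{m}$, one obtains an algebraic constraint that pins any potential kernel into a subobject on which the Frobenius acts nilpotently, i.e.\ into a \emph{unipotent} Fontaine--Laffaille sub-module.

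This is where the $\ell = 2$ variant enters. The full Fontaine--Laffaille functor is not fully faithful at $\ell = 2$, so the Galois-side kernel of $i^*$ does not a priori match the Fontaine--Laffaille-side kernel. However, on the subcategory of unipotent objects the functor remains fully faithful. The previous step locates the obstruction to Ihara injectivity inside this unipotent subcategory, and the supersingular hypothesis (equivalently, no \'{e}tale or multiplicative subobject) immediately rules out any non-zero unipotent subobject of the relevant Fontaine--Laffaille module at $\mathfrak{m}$, forcing $K = 0$.

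The main obstacle, and the only substantive difference with the argument at $\ell > 2$, is the failure of full faithfulness of the Fontaine--Laffaille functor at $\ell = 2$. Bypassing it demands the two technical inputs above: (i) using the Hecke composition relation to confine the potential kernel to the unipotent Fontaine--Laffaille subcategory, and (ii) knowing that on this subcategory full faithfulness holds and interacts correctly with the Dieudonn\'{e} modules of the Jacobians after localization at a non-Eisenstein $\mathfrak{m}$.
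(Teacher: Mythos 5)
Your proposal correctly identifies the main technical obstacle at $\ell=2$ — the loss of full faithfulness of the mod~$2$ Fontaine--Laffaille functor — and correctly identifies that restriction to unipotent objects (where a version of the theory still works, via Fontaine's theorem on Honda systems) is the way around it. That much matches the paper. But the mechanism you propose for actually deriving the contradiction does not work and differs from what the paper does.

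Two concrete problems. First, you invoke the relation $i_*i^* = \bigl(\begin{smallmatrix} p+1 & T_p \\ S_p^{-1}T_p & p+1 \end{smallmatrix}\bigr)$ together with $T_p\in\mathfrak{m}$ in order to ``confine the potential kernel'' to the unipotent subcategory. This relation is not used in the proof of the lemma at all; it is used afterwards, in Lemma~\ref{Level raising indefinite}, which is the \emph{consumer} of Ihara's lemma. Confinement to the unipotent category is not something you need to fight for: the supersingular hypothesis on $\rho_\mathfrak{m}|_{G_{\QQ_2}}$ already forces the entire $\mathfrak{m}$-localized $2$-divisible groups of the Jacobians (and the putative kernel) to be connected with connected dual, hence unipotent, because all Jordan--H\"older constituents are $\rho_\mathfrak{m}|_{G_{\QQ_2}}$ and this representation is irreducible. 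Second, and more seriously, your concluding sentence — that the supersingular hypothesis ``rules out any non-zero unipotent subobject'' — is self-defeating: supersingular is precisely the unipotent (connected-connected) case, so there is nothing to rule out there, and no contradiction arises from this line of reasoning.

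The paper's actual contradiction is geometric, not representation-theoretic. After using Fontaine's theorem to transport the failure of mod~$2$ injectivity from the Tate modules to the Honda systems, one gets a nonzero element $(\omega_1,\omega_2)$ in the kernel of $\pi_1^*+\pi_2^*$ on $H^0(X_U\otimes\FF_2,\Omega^1)_\mathfrak{m}^2$. A Mazur-style divisor argument shows the divisor of $\omega_1$ contains the supersingular locus, so $\omega_1$ is divisible by the Hasse invariant $\mathrm{Ha}$; iterating, $\omega_1 = c\cdot\mathrm{Ha}^2$, which is Eisenstein, contradicting $\mathfrak{m}$ non-Eisenstein. This Hasse invariant / Eisenstein-ideal step is the missing engine in your proposal — the unipotent Fontaine--Laffaille input only gets you from Galois to differentials, and the geometry on the special fiber of the Shimura curve does the rest.
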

\begin{proof} In \cite{Diamond1994}, \cite{Diamond1994a}, this is proven for $\ell\geq 3$. This restriction comes from the fact that they use Fontaine--Laffaille theory. We show how to adapt their argument to our situation. If $X$ is a smooth proper curve, we let $J(X)$ denote its Jacobian. The 2-divisible groups $J(X_U)[2^\infty]$ and $J(X_{U_0(p)})[2^\infty]$ admit direct summands $J(X_U)[2^\infty]_\mathfrak{m}$ and $J(X_{U_0(p)})[2^\infty]_\mathfrak{m}$, which are stable under $\TT$. By the Eichler--Shimura relations, the Galois representation on $T_2J(X_U)[2]_\mathfrak{m}$ and $T_2J(X_{U_0(p)})[2]_\mathfrak{m}$ are successive extensions of $\rho_\mathfrak{m}$, and hence the summands are connected and unipotent $2$-divisible groups. Consider the map $J(X_{U_0(p)})[2^\infty]_\mathfrak{m} \to J(X_U)[2^\infty]_\mathfrak{m}^2$ which on the Tate module is dual to $i^*$. Assume that $i^*$ is not injective mod 2. Then the induced map $J(X_{U_0(p)})[2]_\mathfrak{m} \to J(X_U)[2]_\mathfrak{m}^2$ is not surjective, and hence has a cokernel that is a successive extension of $\rho_\mathfrak{m}$. 

By Fontaine's theorem (see \cite[Theorem 7.2.10]{Brinon-Conrad}), it follows that the induced map on the Honda systems attached to $J(X_U)[2^\infty]_\mathfrak{m}^2$ and $J(X_{U_0(p)})[2^\infty]_\mathfrak{m}$ is not injective mod 2. But this implies (since the $2$-divisible groups involved are connected) that the induced map $$H^0(J(X_{U}),\Omega^1)_\mathfrak{m}^2\cong \mathrm{Lie}(J(X_U)[2^\infty]_\mathfrak{m}^2)^* \to \mathrm{Lie}(J(X_{U_0(p)})[2^\infty]_\mathfrak{m})^*\cong H^0(J(X_{U_0(p)}),\Omega^1)_\mathfrak{m}$$ is not injective mod 2. Thus
\[\pi_1^*+\pi_2^*:H^0(X_U\otimes \FF_2, \Omega^1)_\mathfrak{m}^2 \to H^0(X_{U_0(p)}\otimes \FF_2, \Omega^1)_\mathfrak{m}\]
has non-trivial kernel. Let $(\omega_1,\omega_2)$ be a non-zero element in the kernel. Arguing as in \cite[Lemma 8 and 9]{Diamond1994}, we conclude that the divisor of $\omega_1$ must be inside the supersingular locus of $X_U\otimes \FF_2$, and in fact contains all supersingular points. Now by \cite[Section 5]{Kassaei1999}, there is a line bundle $\omega$ on $X_U\otimes \FF_2$ and a section $\mathrm{Ha}\in H^0(X_U\otimes \FF_2, \omega)$ which vanishes to order 1 at each supersingular points (Note that even though the running assumption of \cite{Kassaei1999} is that $\ell>3$, this is not needed for \cite[Section 5]{Kassaei1999}). This property determines $\mathrm{Ha}$ up to a non-zero scalar.  Using this characterization, we get $\pi_1^*\mathrm{Ha}$ and $\pi_2^*\mathrm{Ha}$ coincide up to a non-zero scalar. It is known that $\TT$ acts on $\mathrm{Ha}$ through an Eisenstein maximal ideal.

Now, we have an isomorphism $\Omega^1\cong \omega^{\otimes 2}$, and hence we conclude that $\omega_1=\mathrm{Ha}\cdot\omega_1'$ for some $\omega_1'\in H^0(X_U,\omega)$, and similarly for $\omega_2$. But now in the equation $\pi_1^*\omega_1=-\pi_2^*\omega_2$ we can cancel out $\pi_1^*\mathrm{Ha}$, and hence $\pi_1^*\omega_1'$ agrees with $\pi_2^*\omega_2'$ up to a non-zero scalar. Repeating the argument now forces $\omega_1'=c\cdot\mathrm{Ha}$, and hence $\omega_1=c\cdot\mathrm{Ha}^2$. But then the action of $\TT$ on $\omega_1$ is Eisenstein, contradicting the fact that $\omega_1\in H^0(X_{U}\otimes \FF_2, \Omega^1)_\mathfrak{m}$ and $\mathfrak{m}$ is non-Eisenstein.
\end{proof}

The following is the main result of this section:
\begin{lem} \label{Level raising indefinite} Suppose $\mathfrak{m}$ is a maximal ideal of $\TT$ that corresponds to the mod 2 reduction of a system of Hecke eigenvalues that contributes to $H^1_{\et}(X_U,\ZZ_2)$. Assume that $\mathfrak{m}$ is non-Eisenstein, that $\rho_\mathfrak{m}$ is supersingular at 2 and that $p$ is a prime such that $T_p\in \mathfrak{m}$. Then there exists a $p$-new system of Hecke eigenvalues lifting $\mathfrak{m}$.
\end{lem}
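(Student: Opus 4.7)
The plan is to argue by contradiction, using Ihara's lemma (Lemma \ref{lem:Ihara}) to clash with the explicit shape of $i_{\ast}i^{\ast}$ displayed above. Set $M = H^1_{\et}(X_U,\ZZ_2)_\mathfrak{m}$ and $N = H^1_{\et}(X_{U_0(p)},\ZZ_2)_\mathfrak{m}$; both are finitely generated $\TT_\mathfrak{m}$-modules and $\ZZ_2$-flat by the listed properties. Since $i^{\ast}$ is injective after inverting $2$ and $M^2$ is torsion-free, $i^{\ast}_\mathfrak{m}\colon M^2\to N$ is already integrally injective, yielding a short exact sequence
\[
0\longrightarrow M^2\longrightarrow N\longrightarrow C\longrightarrow 0,\qquad C:=\coker(i^{\ast}_\mathfrak{m}).
\]
By the Jacquet--Langlands correspondence together with Atkin--Lehner theory recalled above, the eigensystems in $C[\tfrac12]$ are exactly the $p$-new eigensystems appearing in $N[\tfrac12]$.

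Suppose for contradiction that no $p$-new system lifts $\mathfrak{m}$. Then $C_\mathfrak{m}[\tfrac12]=0$, so $C_\mathfrak{m}$ is $2$-power torsion. On the other hand, Lemma \ref{lem:Ihara} asserts that $i^{\ast}_\mathfrak{m}$ is injective modulo $2$; tensoring the displayed short exact sequence with $\FF_2$ and using flatness of $M$ and $N$ gives $C_\mathfrak{m}[2]=0$. A finitely generated $\ZZ_2$-module which is $2$-power torsion and has trivial $2$-torsion vanishes, so $C_\mathfrak{m}=0$ and $i^{\ast}_\mathfrak{m}$ is an isomorphism.

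To extract a contradiction, use Poincar\'e duality: the perfect pairings on $M$ and $N$ identify $i_{\ast,\mathfrak{m}}$ with the adjoint of $i^{\ast}_\mathfrak{m}$, so $i_{\ast,\mathfrak{m}}$ is also an isomorphism, and so is the composite $i_{\ast,\mathfrak{m}}\circ i^{\ast}_\mathfrak{m}\in\End(M^2)$. But this composite is given by the matrix
\[
\begin{pmatrix} p+1 & T_p \\ S_p^{-1}T_p & p+1 \end{pmatrix},
\]
every entry of which lies in $\mathfrak{m}$: $T_p\in\mathfrak{m}$ by hypothesis, $p+1\in(2)\subset\mathfrak{m}$ since $p$ is odd, and $S_p$ is a unit in $\TT_\mathfrak{m}$ (it acts as $p$ times a diamond operator, and $p$ is odd), so $S_p^{-1}T_p\in\mathfrak{m}$ as well. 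Therefore $i_{\ast,\mathfrak{m}}\circ i^{\ast}_\mathfrak{m}$ acts as zero on the quotient $M^2/\mathfrak{m} M^2$, which is nonzero because $\mathfrak{m}$ contributes to $H^1_{\et}(X_U,\ZZ_2)$. By Nakayama, the composite cannot be an isomorphism, giving the required contradiction.

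The central obstacle is Ihara's lemma in residual characteristic $2$, which is exactly the content of Lemma \ref{lem:Ihara} established above; once it is available, the remainder is a standard Nakayama-style contradiction, with only the verification that $S_p^{-1}$ makes sense in $\TT_\mathfrak{m}$ requiring a quick check.
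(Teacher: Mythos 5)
Your proof is correct and follows essentially the same route as the paper: you use Ihara's lemma (Lemma \ref{lem:Ihara}) to show that if no $p$-new system lifts $\mathfrak{m}$ then $i^*_\mathfrak{m}$ is an isomorphism, then deduce via Poincar\'e duality that $i_{*,\mathfrak{m}}i^*_\mathfrak{m}$ is an isomorphism, and contradict this by observing its matrix is congruent to zero mod $\mathfrak{m}$. The only difference is that you spell out the Tor-sequence argument for $C[2]=0$ and the unit check on $S_p$, which the paper leaves implicit.
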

\begin{proof} By what we have said so far, we only need to show that $H^1_{\et}(X_{U_0(p)},\ZZ_2)_\mathfrak{m}/\mathrm{Im}\,i^*$ is not torsion. Suppose this is the case, then because $i^*$ is injective mod 2 (Lemma \ref{lem:Ihara}), this quotient is actually trivial, and hence $i^*$ is an isomorphism. By duality $i_*$ is also an isomorphism. But $T_p\in \mathfrak{m}$ implies that $i_*i^*$ is 0 mod $\mathfrak{m}$, hence can not be surjective. This gives the desired contradiction.  
\end{proof}
\mar{Need to fix notation for etale cohomology: Need to base change to $\overline{\QQ}_2.$ Maybe define more clearly what system of Hecke eigenvalues mean. Define missing notations}
\subsection{Proof of Theorem \ref{thm:levelraising} in the supersingular case}
In this section we prove Theorem \ref{thm:levelraising} under the assumption that the modular mod 2 representation $\overline{\rho}:G_\QQ\to \GL_2(\FF_2)$ is supersingular at 2. We choose a prime $p_1|N$ such that $\ord_{p_1}(N)>1$ and $\ord_{p_1}(\Delta)$ is odd if such a prime exists, otherwise choose any prime $p_1|N$. Choose an auxiliary prime $q>7$ as in Definition \ref{Auxiliary prime definition}. We choose $U_q \subset \Iw_1(q^n)\subset \GL_2(\ZZ_q)$ a sufficiently small open compact subgroup as in Definition \ref{def:sufsmall}.

We will first show that we can find a weight 2 modular form $g$ with trivial central character, such that $g$ is new at each prime $q_i$ in our list (without specifying the signs):
\begin{prop} \label{unrefined raising} Assume that we are in the situation of Theorem \ref{thm:levelraising}, with $\overline{\rho}$ supersingular. Then there is a modular form $g$ of weight 2 with corresponding automorphic representation $\pi$ of $\GL_2(\AA)$ such that:
\begin{itemize}
\item $\pi$ has trivial central character.
\item For $p||N$ or $p=q_i$, $\pi_p$ is an unramified twist of the Steinberg representation.
\item For $p|N$, $\pi_p^{\Iw(p^{\ord_p(N)})}\neq 0$.
\item $\pi_q^{U_q}\neq 0$.
\item For all other primes $p$, $\pi_p$ is unramified.
\end{itemize}
\end{prop}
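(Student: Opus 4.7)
The plan is to induct on $m$, adjoining the primes $q_i$ to the ramification set of an underlying quaternion algebra one at a time while keeping the primes in $S_1 := \{p : p \| N\}$ in the ramification set throughout. Write $T_i := S_1 \cup \{q_1, \dots, q_i\}$; at stage $i$ I will have a Hecke eigenvalue system coming from an automorphic form on the quaternion algebra $D_i$ whose set of ramified finite places is exactly $T_i$, and the total number of ramified places of $D_i$ (including possibly $\infty$) will be kept even by alternating between definite and indefinite choices.

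First, for the base case $i = 0$: by Assumption \ref{ass:main} (3), $f$ is (up to unramified twist) Steinberg at every $p \in S_1$, so Jacquet--Langlands transfers the eigenvalue system of $f$ to a form on a quaternion algebra $D_0$ whose ramified finite places are exactly $S_1$, with $D_0$ definite if $|S_1|$ is odd and indefinite otherwise. I would take a level subgroup $U^{(0)} \subset G_{D_0}(\AA^\infty)$ whose local factors at $p \mid N$ match the conductor of $f$, with $U^{(0)}_q \subset \Iw_1(q^n)$ taken small enough that $U^{(0)}$ is sufficiently small in the sense of Definition \ref{def:sufsmall} in the definite case, or is contained in $V_1(q^n)$ in the indefinite case. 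The reduction of the Hecke eigenvalue system of $f$ modulo $2$ cuts out a maximal ideal $\mathfrak{m}_0 \subset \TT$, which is non-Eisenstein since $\overline{\rho}$ is absolutely irreducible by Assumption \ref{ass:main} (2), and $\mathfrak{m}_0$ lies in the support of the corresponding Hecke module.

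For the inductive step, suppose $\mathfrak{m}_i \subset \TT$ is a non-Eisenstein maximal ideal in the support of the Hecke module attached to $U^{(i)}$ (namely $S_\gamma(U^{(i)}, \ZZ_2)$ in the definite case, or $H^1_{\et}(X_{U^{(i)}}, \ZZ_2)$ in the indefinite case) with residual representation $\overline{\rho}$. Since $q_{i+1}$ is a level-raising prime, $a_{q_{i+1}}$ is even, so $T_{q_{i+1}} \in \mathfrak{m}_i$. I would invoke Lemma \ref{Level raising definite} when $D_i$ is definite, or Lemma \ref{Level raising indefinite} when $D_i$ is indefinite, noting that the supersingular hypothesis on $\overline{\rho}$ at $2$ feeds the latter through Lemma \ref{lem:Ihara}. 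This yields a $q_{i+1}$-new characteristic $0$ Hecke eigenvalue system lifting $\mathfrak{m}_i$, which via Jacquet--Langlands corresponds to an automorphic form on the quaternion algebra $D_{i+1}$ ramified at the finite places $T_{i+1}$, with $D_{i+1}$ of the opposite parity (definite vs.\ indefinite) from $D_i$. Its residual representation is again $\overline{\rho}$, hence the resulting $\mathfrak{m}_{i+1}$ is still non-Eisenstein, allowing the induction to continue.

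After $m$ iterations, the final Hecke eigenvalue system transfers via Jacquet--Langlands back to a weight $2$ modular form $g$ on $\GL_2/\QQ$ with trivial central character whose associated automorphic representation $\pi$ is an unramified twist of the Steinberg representation at every $p \in S_1 \cup \{q_1, \dots, q_m\}$, has the same local component as $f$ (hence conductor $p^{\ord_p(N)}$) at primes $p^2 \mid N$, is unramified outside $N \cdot q_1 \cdots q_m \cdot q$, and satisfies $\pi_q^{U_q} \neq 0$. The principal obstacle I foresee is keeping the parity correct across the induction, which forces us to switch between the definite and indefinite settings at each step; this is precisely why both Lemma \ref{Level raising definite} and Lemma \ref{Level raising indefinite} are needed, and why the auxiliary prime $q$ with its small level $U_q$ was introduced in the setup, to guarantee sufficient smallness (and, in the indefinite case, a moduli interpretation of $X_U$) on both sides of the induction.
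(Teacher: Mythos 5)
Your overall architecture is the same as the paper's: induct on the number of level-raising primes, transfer to the quaternion algebra ramified at exactly the current set of Steinberg primes, and apply Lemma \ref{Level raising definite} or Lemma \ref{Level raising indefinite} according to parity, with the auxiliary prime $q$ and the small level $U_q$ supplying sufficient smallness. However, there is a genuine gap in your inductive step in the indefinite case: you assert that the $q_{i+1}$-new eigensystem produced by Lemma \ref{Level raising indefinite} ``transfers back to a form with trivial central character,'' but nothing in that lemma guarantees this. In the definite case trivial central character is built into the definition of $S_\gamma(U,A)$ (functions on $G_D(\QQ)\backslash G_D(\AA^\infty)/UZ(\AA^\infty)$), but in the indefinite case the cohomology $H^1_{\et}(X_{U_0(q_{i+1})},\ZZ_2)$ sees forms whose central character may be ramified at $q$, precisely because $U_q\subset \Iw_1(q^n)$ does not contain $Z(\ZZ_q)$. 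At $\ell=2$ this cannot be dismissed by a congruence argument, since quadratic characters are invisible mod $2$. The paper closes this gap with a dedicated argument: $\det\rho_{\pi'}\psi_2^{-1}$ is a finite-order character unramified outside $q$; by the auxiliary-prime computation (Remark \ref{det trick}) its restriction to $I_{\QQ_q}$ has order at most $2$; oddness gives $\det\rho_{\pi'}\psi_2^{-1}(-1)=1$, and since $q\equiv 3\bmod 4$ the class of $-1$ generates $\ZZ_q^\times/(\ZZ_q^\times)^2$, so the character is unramified at $q$ too; finally $h(\QQ)=1$ forces it to be trivial. Without this step your induction does not produce a form with trivial central character, and the subsequent sign-adjustment argument (which works $\Sigma'$-componentwise with characters valued in $\mu_2$) would not apply.

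A secondary, more minor issue: your base case at $i=0$ takes $D_0$ ramified at exactly $S_1=\{p: p\|N\}$, which is the split algebra when $S_1=\emptyset$; the paper's indefinite machinery explicitly excludes split $D$, so in that case you would have to fall back on the classical modular curve and Ribet's theorem (which is exactly how the paper starts its induction, at $m=1$). This is easily repaired but should be said.
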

\begin{proof}
This is done by induction on the number $m$ of level raising primes. In the case $m=1$ this follows from Ribet's theorem. Assume that we have found a level raising form $g$ at $m$ primes $q_1,\cdots q_m$, and we wish to add in a prime $q_{m+1}$. The automorphic representation $\pi_g$ is an unramified twist of the Steinberg representations at $q_i$ and the primes $p||N$. Let $D$ be the quaternion algebra that ramifies at exactly these primes (it is definite or indefinite depending on the parity of the size of this set). Let $\mathfrak{m}$ be the maximal ideal of the abstract Hecke algebra that corresponds to $\overline{\rho}$. By assumption it is non-Eisenstein, and its associated mod 2 Galois representation is supersingular at 2. Let $U\subset G_D(\AA)$  be the open compact subgroup given by
\begin{itemize}
\item $U_p=(\cO_D\otimes \ZZ_p)^\times$ for $p\in\Sigma(D)$.
\item $U_p=\Iw(p^{\ord_p(N)})$ for $p|N$, $p\notin \Sigma(D)$.
\item$U_q$ is chosen as above.
\item $U_p=\GL_2(\ZZ_p)$ otherwise.
\end{itemize}

If $D$ is definite, $\pi_g$ contributes to the space $S(U,\overline{\ZZ}_2)$. By Lemma \ref{Level raising definite}, we can find an automorphic representation $\pi'$ corresponding to a weight 2 modular form $g'$ congruent to $g$ mod 2, such that $\pi'^{U_0(q_{m+1})}\neq 0$, that $\pi'$ is new at $q_{m+1}$, and that $\pi'$ has trivial central character. This does the inductive step in this case.

If $D$ is indefinite, $\pi_g$ contributes to the space $H^1_{\et}(X_U\otimes \overline{\QQ}_2, \overline{\ZZ}_2)$. By Lemma \ref{Level raising indefinite}, we can find an automorphic representation $\pi'$ corresponding to a weight 2 modular form $g'$ congruent to $g$ mod 2,  such that $\pi'^{U_0(q_{m+1})}\neq 0$, that $\pi'$ is new at $q_{m+1}$. We claim that $\pi'$ must have trivial central character, or equivalently its associated Galois representation $\rho_{\pi'}$ has determinant $\psi_2$. The fact that $\pi'$ has weight 2 and $\pi'^{U_0(q_{m+1})}\neq 0$ implies that $\det \rho_{\pi'}\psi_2^{-1}$ is a finite order character that is unramified at all $p\neq q$. By our choice of $q$ and Remark \ref{det trick}, $\det \rho_{\pi'}\psi_2^{-1}|_{I_{\QQ_q}}$ has order at most 2. But since $\rho_{\pi'}$ is odd, we must have $\det\rho_{\pi'}\psi_2^{-1}(-1)=1$, where we think of $-1\in \ZZ_q^\times \cong I_{\QQ_q^{\mathrm{ab}}}$ via local class field theory.  But since $q\equiv3 \bmod 4$, $-1$ is a generator of $\ZZ_q^\times/(\ZZ_q^\times)^2$, and thus we conclude that $\det\rho_{\pi'}\psi_2^{-1}$ is unramified at $q$ as well. But since $\QQ$ has class number 1, this forces this character to be trivial, so $\pi'$ indeed has trivial central character. This finishes the inductive step in this case.
\end{proof}
Finally, we show how to modify the signs at level raising primes. Let $\pi$ be the automorphic representation given by Proposition \ref{unrefined raising}.

Now let $\Sigma(D)$ be the set of all primes where $\pi_p$ is an unramified twist of the Steinberg representation. Let $D$ be the quaternion algebra whose finite ramification places are exactly the places in $\Sigma(D)$. 
\mar{don't know how to typeset to make the cases look nice}

\noindent\textbf{Case 1: $D$ is definite}. Let $U$ be the $(\Sigma(D)\subset \Sigma(D))$-open subgroup of $G_D(\AA^\infty)$ such that 
\begin{itemize}
\item For $p|N$, $p\notin \Sigma(D)$, $U_p$ is $\Iw(p^{\ord_p(N)})$.
\item $U_q$ is chosen as above.
\item For $p\notin \Sigma(D)$ and $p\!\!\not| Nq$, $U_p=\GL_2(\ZZ_p)$.
\end{itemize}  
Let $\gamma=(\gamma_v)_{v\in \Sigma(D)}$ be the collection of characters of $\QQ_p^\times\to \mu_2$ such that $\gamma_p(p)=\epsilon_p$, an arbitrarily chosen sign at the prime $p\in \Sigma(D)$. The automorphic representation $\pi$ determines a $\gamma_\pi$, which is the tuple of signs of $\pi_p$ for $p\in \Sigma(D)$. Now since $U$ is sufficiently small, the reduction mod 2 maps
\begin{align*}
S_\gamma(U,\ZZ_2)\to S_\gamma(U,\FF_2), \\
S_{\gamma_\pi}(U,\ZZ_2)\to S_{\gamma_\pi}(U,\FF_2)
\end{align*}
are both surjective. Note however that $S_{\gamma_\pi}(U,\FF_2)= S_{\gamma}(U,\FF_2)$, because any $\gamma$ reduces to the trivial character mod 2. If $\mathfrak{m}$ is the ideal in $\TT$ associated to the mod 2 reduction of the system of Hecke eigenvalues of $\pi$, we see that $ S_{\gamma_\pi}(U,\ZZ_2)_\mathfrak{m}\neq 0$. Hence $ S_{\gamma}(U,\FF_2)_\mathfrak{m}\neq 0$, and because reduction mod 2 is surjective, $S_\gamma(U,\ZZ_2)_\mathfrak{m}\neq 0$. Note also that $S_\gamma(U,\ZZ_2)$ is torsion-free. Thus there exists an automorphic representation $\pi'$ satisfying with the same properties as $\pi$ listed above, but furthermore at each $p\in \Sigma(D)$, $\pi'$ is the $\gamma_p$-twist of the Steinberg representation. This $\pi'$ is almost what we want, except that $\pi'$ might ramify at $q$. However, by Corollary \ref{Level raising form at auxiliary prime}, either $\pi'$ is unramified, or the quadratic twist $\pi'\otimes \chi_q$ is unramified at $q$, where $\chi_q$ is the unique quadratic character that ramifies only at $q$. By the choice of $q$, we are done as in Section \ref{sec:ordinarycaseproof}.
\begin{remark}\label{rem:allpositivesigns} The above argument shows also that if $\gamma$ is trivial then $S_{\gamma_\pi}(U,\ZZ_2)_\mathfrak{m}\neq 0$ implies $S_{\gamma}(U,\ZZ_2)_\mathfrak{m}\neq 0$ even if $U$ is not sufficiently small. It follows that there always exists a level raising form all whose signs are $+1$ in this case.
\end{remark}

\noindent\textbf{Case 2: $D$ is indefinite.} Let $V$ be the open (but not compact) subgroup of $G_D(\AA^\infty)$ such that 
\begin{itemize}
\item For $p|N$, $p\notin \Sigma(D)$, $V_p$ is $\Iw(p^{\ord_p(N)})$.
\item $V_q=\Iw_1(q)Z(\Z_q)$.
\item For $p\in \Sigma(D)$, $V_p=G_D(\QQ_p)=(D\otimes \QQ_p)^\times$.
\item For all other $p$, $V_p=\GL_2(\ZZ_p)$.
\end{itemize}  
\begin{lem} \label{elliptic elements} For any $g\in G_D(\AA^\infty)$, $gG_D(\QQ)g^{-1}\cap VZ(\AA^\infty)/Z(\QQ)$ has no non-trivial element of order $<q$.
\end{lem}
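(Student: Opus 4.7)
The plan is to argue by contradiction: suppose the group contains a non-trivial element of order $n$ with $1 < n < q$. Unpacking, there exists $\gamma \in G_D(\QQ) \setminus Z(\QQ)$ with $g\gamma g^{-1} \in V Z(\AA^\infty)$, whose class in $G_D(\QQ)/Z(\QQ)$ has order $n$. Since $q \notin \Sigma(D)$, we identify $G_D(\Q_q) = \GL_2(\Q_q)$; the $q$-component $h := g_q \gamma g_q^{-1}$ then lies in $V_q Z(\Q_q) = \Iw_1(q) Z(\Q_q)$, and its image $\bar h$ in $\PGL_2(\Q_q)$ has order $n$ (conjugation preserves order modulo center).

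My first step is to reduce mod $q$: the composition $\Iw_1(q) \to \GL_2(\F_q) \to \PGL_2(\F_q)$ has image the unipotent subgroup $\{\left(\begin{smallmatrix}1 & \ast \\ 0 & 1\end{smallmatrix}\right)\}$, which is cyclic of order $q$. Hence the reduction of $\bar h$ in $\PGL_2(\F_q)$ has order $1$ or $q$; since $\bar h^n = 1$ with $n < q$, this reduction is trivial. Consequently $h \in K \cdot Z(\Q_q)$, where $K := \ker(\Iw_1(q) \to \PGL_2(\F_q))$ is the principal congruence subgroup $(1 + qM_2(\Z_q)) \cap \GL_2(\Z_q)$.

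Next I would invoke that $K$ is a pro-$q$ group, which is standard from the filtration $\{1 + q^k M_2(\Z_q)\}_{k \geq 1}$, whose graded pieces are $\F_q$-vector spaces. The same holds for $K \cap Z(\Q_q) = (1 + q\Z_q) \cdot I$, so the image $K Z(\Q_q)/Z(\Q_q) \subset \PGL_2(\Q_q)$ containing $\bar h$ is again pro-$q$. Every element of a pro-$q$ group has order a power of $q$, which together with $n < q$ forces $\bar h = 1$.

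Finally, $\bar h = 1$ says $g_q \gamma g_q^{-1}$ is a scalar in $\GL_2(\Q_q)$; via $D^\times \hookrightarrow (D \otimes \Q_q)^\times = \GL_2(\Q_q)$, this forces $\gamma$ itself to be a scalar in $M_2(\Q_q)$, and since the center of the central simple algebra $D$ is $\QQ$, we get $\gamma \in \QQ^\times = Z(\QQ)$, contradicting our choice. The only point that requires care is that the decomposition $h = uz$ with $u \in \Iw_1(q)$, $z \in Z(\Q_q)$ is only well-defined up to the scalars $(1 + q\Z_q)\cdot I \subset \Iw_1(q)$, but this ambiguity is absorbed precisely in the pro-$q$ part and does not affect either the reduction to $\PGL_2(\F_q)$ or the pro-$q$ property in $\PGL_2(\Q_q)$; this is the minor bookkeeping obstacle, and otherwise the argument is formal.
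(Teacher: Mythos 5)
Your proof is correct and takes essentially the same approach as the paper's: both isolate the $q$-component of the element in $\Iw_1(q)Z(\QQ_q)$ and exploit that $\Iw_1(q)$ is pro-$q$ modulo scalars, so that an element of order $n<q$ there must be central. The paper's version extracts an $h$-th root of the central element $k^h$ inside $1+q\ZZ_q$ rather than first reducing to $\PGL_2(\FF_q)$ and then passing to the congruence subgroup, but this is only a repackaging of the same argument.
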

\begin{proof} Suppose $\gamma$ is a non-trivial element of order $h<q$. Writing the $q$-component of $\gamma$ as $kz$ with $z\in Z(\Q_q)$, $k\in \Iw_1(q)$, we have $k^hz^h$ is central, hence $k^h$ is central. But then $k^h$ is an element in $\ZZ_q^\times$ which is $1$ mod $q$, hence we can extract an $h$-th root $z'$ of it which is also $1$ mod $q$. Then $(kz'^{-1})^h=1$ but $kz'^{-1}\in \Iw_1(q)$ is an element of a pro-$q$ group, so $k=z'$. Hence $\gamma$ is central.
\end{proof}

Let $U\subset G_D(\AA^\infty)$ be the open compact subgroup of $V$ such that $U_p=V_p$ for all $p\notin \Sigma(D)$, and $U_p=(\cO_D\otimes\ZZ_p)^\times$ otherwise. As in the previous section, we define
\[X_V=G_D(\QQ)\setminus \HH^{\pm}\times G_D(\AA^\infty)/V\]
and similarly for $X_U$. Note that neither double coset will change if we replace $U$, $V$ by $UZ(\AA^\infty)$, $VZ(\AA^\infty)$, because $U_p\supset Z(\ZZ_p)$ and $Z(\AA^\infty)=Z(\QQ)\prod_p Z(\ZZ_p)$.
\begin{lem} $X_U$, $X_V$ are compact Riemann surfaces. The natural projection map $X_U\to X_V$ is unramified everywhere, and is a Galois covering with Galois group $$VZ(\AA^\infty)/UZ(A^\infty) \cong \prod_{p\in \Sigma(D)} \ZZ/2.$$
\end{lem}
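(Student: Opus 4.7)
The plan is to reduce everything to a disjoint union of classical Fuchsian quotients of $\HH$ and then invoke Lemma~\ref{elliptic elements}. First, by strong approximation for $\SL_1(D)$ (applicable since $D$ is split at $\infty$), the double coset sets $G_D(\QQ)^+\backslash G_D(\AA^\infty)/U$ and $G_D(\QQ)^+\backslash G_D(\AA^\infty)/VZ(\AA^\infty)$ are finite. Choosing representatives $g_1,\dots,g_r$ and setting $\Gamma_i^U := g_iUg_i^{-1}\cap G_D(\QQ)^+$ and $\Gamma_i^V := g_i V Z(\AA^\infty) g_i^{-1}\cap G_D(\QQ)^+$, one identifies
$$X_U \cong \coprod_i \Gamma_i^U\backslash\HH, \qquad X_V \cong \coprod_i \Gamma_i^V\backslash\HH.$$
Since $D$ is non-split and indefinite, the images of these groups in $\PGL_2(\RR)^+$ are cocompact arithmetic Fuchsian subgroups, so each component is compact.

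Next I would show that $\Gamma_i^U$ and $\Gamma_i^V$ act on $\HH$ with trivial stabilizer modulo $Z(\QQ)$; this single fact gives both the smoothness of $X_U$, $X_V$ and the unramifiedness of the projection. Suppose $\gamma\in\Gamma_i^V$ fixes a point $z\in\HH$. Its image in $\PGL_2(\RR)$ is then elliptic, hence of finite order, so $\gamma$ represents a torsion element of $G_D(\QQ)/Z(\QQ)$. A finite order element of $D^\times/\QQ^\times$ of order $n$ produces an embedding $\QQ(\zeta_n)\hookrightarrow D$, which forces $\phi(n)\le 2$ and therefore $n\in\{1,2,3,4,6\}$. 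Because $q>7$, Lemma~\ref{elliptic elements} applied with $g=g_i$ forces $\gamma\in Z(\QQ)$, which acts trivially on $\HH$. The analogous statement for $\Gamma_i^U\subseteq \Gamma_i^V$ is immediate. Hence $X_U$ and $X_V$ are smooth compact Riemann surfaces, and since no stabilizer is present on either side, the projection $X_U\to X_V$ is \'etale at every point.

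For the Galois structure, right multiplication by $V$ on $\HH^\pm\times G_D(\AA^\infty)$ descends to an action on $X_U$ that commutes with the left action of $G_D(\QQ)$. Both $U$ (by definition of $X_U$) and $Z(\AA^\infty)$ (since we quotient by the center in the double coset) act trivially, so we obtain an action of $VZ(\AA^\infty)/UZ(\AA^\infty)$ on $X_U$ whose orbits, by the double coset description, are exactly the fibers of $X_U\to X_V$. The identification of this group is local: since $V_p=U_p$ for $p\notin \Sigma(D)$, we have
$$VZ(\AA^\infty)/UZ(\AA^\infty) \cong \prod_{p\in\Sigma(D)} D_p^\times/\mathcal{O}_{D_p}^\times\QQ_p^\times.$$
At each such $p$, $D_p$ is a quaternion division algebra, and a uniformizer $\pi_{D_p}$ satisfies $v_p(\nu_D(\pi_{D_p}))=1$ whereas $v_p(\nu_D(p))=2$, so $D_p^\times/\mathcal{O}_{D_p}^\times\QQ_p^\times\cong \ZZ/2$, yielding the Galois group $\prod_{p\in\Sigma(D)}\ZZ/2$ as claimed.

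The main obstacle is the torsion-freeness step in paragraph two: without the careful choice $q>7$ in the level subgroup $V_q=\Iw_1(q)Z(\ZZ_q)$, elliptic fixed points of orders $2,3,4,6$ could appear in $\Gamma_i^V\backslash\HH$ and would both destroy the Riemann surface structure and introduce ramification in $X_U\to X_V$. Everything else is a matter of combining strong approximation with the standard local structure theory of quaternion division algebras.
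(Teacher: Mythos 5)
Your argument is correct and follows essentially the same route as the paper: strong approximation to decompose $X_U$, $X_V$ into cocompact Fuchsian quotients, Lemma \ref{elliptic elements} (via the bound $n\le 6$ on torsion in $D^\times/\QQ^\times$) to rule out elliptic fixed points and hence ramification, and the local valuation computation $D_p^\times/\cO_{D_p}^\times\QQ_p^\times\cong\ZZ/2$ for the Galois group. The only slip is notational: the class sets $G_D(\QQ)^+\backslash G_D(\AA^\infty)/U$ and $G_D(\QQ)^+\backslash G_D(\AA^\infty)/VZ(\AA^\infty)$ need not admit a common system of representatives, so the two disjoint-union decompositions should not be indexed by the same set; this does not affect the argument.
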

\begin{proof} By strong approximation we have a finite decomposition $G_D(\AA^\infty)=\coprod G_D(\QQ) t_i V$. This gives
\[ X_V=\coprod  \Gamma_i \setminus\HH^\pm, \]
where $\Gamma_i= t_iVt_i^{-1} \cap G_D(\QQ)$ is a discrete group acting on $\HH^\pm$ through its infinite component (which is in $\GL_2(\RR)$). This gives $X_V$ the structure of a compact Riemann surface in the usual way.

The group $VZ(\AA^\infty)/UZ(A^\infty)$ acts on $X_U$ by right translation. Notice for each $p\in \Sigma(D)$, $V_pZ(\QQ_p)/U_pZ(\QQ_p)\cong \ZZ/2$. We claim that the action is faithful and free. Suppose $vz\in VZ(\AA^\infty)$ fixes a point represented by $(\tau,g)$ with $\tau\in \HH^\pm$, $g\in G_D(\AA)$. This means there exists $\gamma\in G_D(\QQ)$, $u\in U$ such that
\[(\tau,gvz)=(\gamma\tau,\gamma gu).\]
The element $\gamma\in G_D(\QQ)\cap gVZ(\AA^\infty)g^{-1}$ thus has a fixed point in $\HH^\pm$. Because $\gamma\in G_D(\QQ)$ acts discretely on $\HH^\pm$, it acts as a finite order automorphism on $\HH^\pm$, and there exists $h\leq 6$ such that $\gamma^h$ acts trivially on $\HH^\pm$. By Lemma \ref{elliptic elements} (and the fact $q$ is chosen to be large), $\gamma$ is central. But then $gvz=\gamma gu$ implies $vz\in UZ(\AA^\infty)$.
\end{proof}
Given the above lemmas, we proceed similar to the previous case. For $\gamma=(\gamma_p)_{p\in \Sigma(D)}$ a tuple of unramified characters $\gamma_p:\QQ_p^\times \to \mu_2$, we have the local system $\ZZ_2(\gamma)$ on $X_V$, given by twisting the trivial local system along the covering map $X_U\to X_V$. We have a short exact sequence
\[0\to H^1(X_V,\ZZ_2(\gamma))/2 \to H^1(X_V,\FF_2(\gamma))\to H^2(X_V,\ZZ_2(\gamma))[2].\]
The Hecke algebra $\TT=\mathbb{Z}[T_p,S_p]_{p\notin \Sigma(D)\cup \{q\}}$ acts on each term of the sequence, and the sequence is equivariant with respect to the $\mathbb{T}$-action. Since the $\TT$-action on the $H^2$ is Eisenstein, for $\mathfrak{m}$ the non-Eisenstein maximal ideal corresponding to the mod 2 representation $\bar\rho$, we have
\[ H^1(X_V,\ZZ_2(\gamma))_{\mathfrak{m}}\to  H^1(X_V,\F_2(\gamma))_{\mathfrak{m}}\]
is a surjection of Hecke modules. 
If $\gamma_\pi$ denote the tuple giving the signs of $\pi$, we have $H^1(X_V,\ZZ_2(\gamma_\pi))_{\mathfrak{m}}\neq 0$ and hence  $H^1(X_V,\F_2(\gamma_\pi))_{\mathfrak{m}}=H^1(X_V,\FF_2)_{\mathfrak{m}}\neq 0$. Because the Hecke action on $H^0$ is also Eisenstein, $H^1(X_V,\ZZ_2(\gamma))_\mathfrak{m}$ is torsion-free. Thus for any $\gamma$, $ H^1(X_V,\ZZ_2(\gamma))_{\mathfrak{m}}\neq 0$. The Jacquet--Langlands correspondence gives an automorphic representation $\pi'$ contributing to this space, and we are done by the same argument as in the definite case.
This finishes the proof of Theorem \ref{thm:levelraising} in the supersingular case.

\section{Preliminaries on local conditions}
\label{Preliminaries on local conditions}
So far we have only used items (1-4) of Assumption \ref{ass:main}. Henceforth we will assume  that all items in Assumption \ref{ass:main} holds for the elliptic curve $E$. Suppose $A$ is obtained from $E$ via level raising at $m\ge0$ primes (Definition \ref{def:levelraising}). Fix an isomorphism between $A[\lambda]\cong E[2]\otimes k$ and denote them by $V$. 

\begin{definition}
Let $v$ be a place of $\mathbb{Q}$. We define $H^1_\mathrm{ur}(\mathbb{Q}_v,V):=H^1(\mathbb{Q}_v^\mathrm{ur}/\mathbb{Q}_v, V^I)\subseteq H^1(\mathbb{Q}_v, V)$ consisting of classes which are split over an unramified extension of $\mathbb{Q}_v$. 
\end{definition}

\begin{definition}
Let $\mathcal{L}=\{\mathcal{L}_v\}$ be the collection of $k$-subspaces $\mathcal{L}_v\subseteq H^1(\mathbb{Q}_v, V)$, where $v$ runs over every place of $\mathbb{Q}$. We say $\mathcal{L}$ is a collection of \emph{local conditions} if $\mathcal{L}_v=H^1_\mathrm{ur}(\mathbb{Q}_v,V)$ for almost all $v$. We define the \emph{Selmer group} cut out by the local conditions $\mathcal{L}$ to be $$H^1_\mathcal{L}(V):=\{x\in H^1(\mathbb{Q} ,V): \res_v(x)\in \mathcal{L}_v, \text{for all } v\}.$$
\end{definition}

\begin{definition}
  We define $\mathcal{L}_v(A)$ to be the image of the local Kummer map $$A(\mathbb{Q}_v) \otimes_{\mathcal{O}_F} \mathcal{O}_F/\lambda \rightarrow  H^1(\mathbb{Q}_v, A[\lambda])=H^1(\mathbb{Q}_v,V).$$ The \emph{$\lambda$-Selmer group} of $A$ is defined to be the Selmer group cut out by $\mathcal{L}(A):=\{\mathcal{L}_v(A)\}$, denoted by $\Sel_\lambda(A/\mathbb{Q})$, or $\Sel(A)$ for short (if that causes no confusion). Its dimension as a $k$-space is called the \emph{$\lambda$-Selmer rank} of $A$, denoted by $\dim \Sel(A)$ for short.  For details on descent with endomorphisms, see the appendix of \cite{Gross2012}.
\end{definition}

\begin{definition}
The Weil pairing $E[2]\times E[2]\rightarrow \mu_2$ induces a perfect pairing  $V\times V\rightarrow k(1)$. We identify $V\cong V^*=\Hom(V, k(1))$ using this pairing. For each place $v$ of $\mathbb{Q}$, we define the cup product pairing  $$\langle\ ,\ \rangle_v: H^1(\mathbb{Q}_v, V)\times H^1(\mathbb{Q}_v, V)\rightarrow H^2(\mathbb{Q}_v, k(1))\cong k.$$ This is a perfect pairing by the local Tate duality.  We denote the annihilator of $\mathcal{L}_v$ by  $$\mathcal{L}_v^\perp:=\{x\in H^1(\mathbb{Q}_v, V): \langle x, y\rangle_v=0, \text{for all } y\in \mathcal{L}_v\}.$$ Then $\dim_k\mathcal{L}_v+\dim \mathcal{L}_v^\perp=\dim H^1(\mathbb{Q}_v,V)$ by the non-degeneracy of $\langle\ ,\ \rangle_v$. By the local Tate duality for the elliptic curve $E$, $\mathcal{L}_v(E)$ is equal to its own annihilator $\mathcal{L}_v(E)^\perp$ and hence $\dim \mathcal{L}_v(E)=\frac{1}{2}\dim H^1(\mathbb{Q}_v,V)$.
\end{definition}

\begin{lemma}\label{lem:dimension}
  Suppose $v\nmid 2N\infty$. Then $$\dim H^1(\mathbb{Q}_v,V)=2\dim H^1_\mathrm{ur}(\mathbb{Q}_v, V)=0,2,4,$$ if  $\Frob_v$ is of order $3, 2, 1$ acting on $V$ respectively.
\end{lemma}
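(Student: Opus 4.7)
Proof plan. Since $v\nmid 2N\infty$, the elliptic curve $E$ has good reduction at $v$, so $V=E[2]\otimes k$ is an unramified $G_{\mathbb{Q}_v}$-module of $2$-power order, while $\mathrm{char}(\mathbb{F}_v)=v\neq 2$. Thus multiplication by $|V|$ is invertible on the pro-order of $G_{\mathbb{Q}_v}$, placing us in the familiar ``tame'' setting.

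First I will compute $\dim_k H^1(\mathbb{Q}_v,V)$. The Weil pairing identification $V\cong V^*=\Hom(V,k(1))$ from Section \ref{Preliminaries on local conditions}, combined with local Tate duality, gives
\[ \dim_k H^2(\mathbb{Q}_v,V)=\dim_k H^0(\mathbb{Q}_v,V^*)=\dim_k H^0(\mathbb{Q}_v,V). \]
Tate's local Euler characteristic formula for the finite $G_{\mathbb{Q}_v}$-module $V$ reads
\[ \dim_k H^0(\mathbb{Q}_v,V)-\dim_k H^1(\mathbb{Q}_v,V)+\dim_k H^2(\mathbb{Q}_v,V)=-[k:\mathbb{F}_2]\cdot [\mathbb{Q}_v:\mathbb{Q}_v]\cdot\mathrm{ord}_v(|V|)=0, \]
since $v\nmid 2$. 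Combining these gives $\dim_k H^1(\mathbb{Q}_v,V)=2\dim_k H^0(\mathbb{Q}_v,V)$.

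Next I will handle $H^1_{\mathrm{ur}}$. Since $V$ is unramified, $V^{I_v}=V$ and $H^1_{\mathrm{ur}}(\mathbb{Q}_v,V)=H^1(\hat{\mathbb{Z}},V)=V/(\Frob_v-1)V$, whose $k$-dimension equals that of $\ker(\Frob_v-1|_V)=V^{\Frob_v}=H^0(\mathbb{Q}_v,V)$. Therefore
\[ \dim_k H^1_{\mathrm{ur}}(\mathbb{Q}_v,V)=\dim_k H^0(\mathbb{Q}_v,V)=\tfrac{1}{2}\dim_k H^1(\mathbb{Q}_v,V), \]
which is also consistent with the standard fact that $H^1_{\mathrm{ur}}$ is its own annihilator under the local Tate pairing.

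Finally I will compute $\dim_k V^{\Frob_v}$ via a case analysis on the order of $\Frob_v$ in $\Gal(\mathbb{Q}(E[2])/\mathbb{Q})\cong S_3$ (using Assumption \ref{ass:main} (2) and Remark \ref{rem:2torsionfield}), noting that $S_3$ acts on $E[2]$ through its $2$-dimensional irreducible representation over $\mathbb{F}_2$. If $\Frob_v$ has order $1$, it acts trivially and $V^{\Frob_v}=V$, so $\dim_k V^{\Frob_v}=2$. If $\Frob_v$ has order $2$ it corresponds to a transposition, whose action on the standard $\mathbb{F}_2$-representation is a reflection with a $1$-dimensional fixed space, giving $\dim_k V^{\Frob_v}=1$. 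If $\Frob_v$ has order $3$, it acts as a $3$-cycle whose characteristic polynomial on $\mathbb{F}_2^2$ is $x^2+x+1$, which is irreducible over $\mathbb{F}_2$, so $V^{\Frob_v}=0$. Doubling gives $\dim_k H^1(\mathbb{Q}_v,V)=4,2,0$ respectively, completing the proof. No step poses a real obstacle; the only care required is in the representation-theoretic case analysis in characteristic $2$.
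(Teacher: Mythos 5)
Your proof is correct and follows essentially the same route as the paper: identify $H^1_{\mathrm{ur}}(\mathbb{Q}_v,V)$ with the Frobenius coinvariants and do the case analysis on the order of $\Frob_v$ in the standard $2$-dimensional representation of $S_3$ over $\mathbb{F}_2$, then use local duality to get the factor of $2$. The only cosmetic difference is that you derive $\dim H^1 = 2\dim H^1_{\mathrm{ur}}$ from the Euler characteristic formula together with $H^2\cong (H^0)^\vee$, whereas the paper cites directly the fact that $H^1_{\mathrm{ur}}$ is its own annihilator under the local Tate pairing (which is proved by exactly the computation you spell out).
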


\begin{proof}
The map $c\mapsto c(\Frob_v)$ induces an isomorphism $H^1_\mathrm{ur}(\mathbb{Q}_v,V)\cong V^I/(\Frob_v-1)V^I$, which has dimension $0,1,2$ if $\Frob_v$ has order $3,2,1$ respectively. If follows from  \cite[I.2.6]{Milne1986} that the annihilator of $H^1_\mathrm{ur}(\mathbb{Q}_v, V)$  is equal to itself, hence $\dim H^1(\mathbb{Q}_v,V)=2\dim H^1_\mathrm{ur}(\mathbb{Q}_v,V)$.
\end{proof}

Under our assumptions, the following lemma identifies the local conditions of the abelian variety $A$ purely in terms of the Galois representation $V$, which is the key to control Selmer ranks in level raising families in the next two sections.

\begin{lemma}\label{lem:localconditions}
  Suppose $A$ is obtained from $E$ via level raising at primes $q_1,\ldots q_m$ ($m\ge0$). Let $\mathcal{L}=\mathcal{L}(A)$ be the local conditions defining $\Sel(A)$. Then
  \begin{enumerate}
  \item For $v\nmid 2q_1\cdots q_m\infty$, $$\mathcal{L}_v=\mathcal{L}_v^\perp=H^1_\mathrm{ur}(\mathbb{Q}_v,V).$$
  \item For $v=\infty$, $$\mathcal{L}_v=H^1(\mathbb{Q}_v, V)=0.$$
  \item For $v=q_i$, if $\Frob_{q_i}$ has order 2 acting on $V$ and $A$ has sign $\varepsilon_i=+1$, then $H^1(\mathbb{Q}_v,V)$ is 2-dimensional and $$\mathcal{L}_v=\mathcal{L}_v^\perp=\im(H^1(\mathbb{Q}_v, W)\rightarrow H^1(\mathbb{Q}_v, V))$$ is 1-dimensional. Here $W$ is the unique $G_{\mathbb{Q}_v}$-stable line in $V$. Moreover, $\mathcal{L}_v$ and $H^1_\mathrm{ur}(\mathbb{Q}_v,V)$ are distinct lines.
  \item If $E$ is good at $v=2$, then $$\mathcal{L}_2=\mathcal{L}_2^\perp=H^1_\mathrm{fl}(\Spec \mathbb{Z}_2, \mathcal{E}[2])\otimes k,$$ where $\mathcal{E}/\mathbb{Z}_2$ be the Neron model of $E/\mathbb{Q}_2$ and $H^1_\mathrm{fl}(\Spec \mathbb{Z}_2, \mathcal{E}[2])$ is the flat cohomology group, viewed as a subspace of $H^1_\mathrm{fl}(\Spec \mathbb{Q}_2, E[2])=H^1(\mathbb{Q}_2, E[2])$.
  \item If $E$ is multiplicative at $v=2$, then $$\mathcal{L}_2=\mathcal{L}_2^\perp=\im(H^1(\mathbb{Q}_2, W)\rightarrow H^1(\mathbb{Q}_2, V)).$$ Here $W$ is the unique $G_{\mathbb{Q}_2}$-stable line in $V$.
  \end{enumerate}
\end{lemma}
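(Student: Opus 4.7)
The plan is to treat each of the five cases separately, with the self-duality $\mathcal{L}_v = \mathcal{L}_v^\perp$ in every case following uniformly from the Weil pairing $V \times V \to k(1)$ combined with local Tate duality applied to the Kummer sequence $0 \to A[\lambda] \to A \stackrel{\lambda}{\to} A \to 0$ and the fact that $A$ is principally polarized via the pairing inherited from $E$.

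For (1), I would exploit that $A$ has either good reduction at $v$ (if $v \nmid N$) or semistable reduction with component group of odd order by Assumption \ref{ass:main} (3) and Remark \ref{rem:componentgroup}. The standard Kummer-image computation on the N\'eron model then identifies $\mathcal{L}_v = A(\mathbb{Q}_v)/\lambda A(\mathbb{Q}_v)$ with $H^1_\mathrm{ur}(\mathbb{Q}_v, V)$. For (2), Assumption \ref{ass:main} (5) combined with Remark \ref{rem:negativedisc} implies that complex conjugation $c$ acts as an order-$2$ element of $S_3$ on $V$; in a basis diagonalizing this action, a direct Tate cohomology calculation gives $V^c = (1+c)V$, hence $H^1(\mathbb{R}, V) = 0$.

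For (3), Lemma \ref{lem:dimension} gives $\dim_k H^1(\mathbb{Q}_{q_i}, V) = 2$. Since $A$ is new at $q_i$ with sign $+1$, the Tate uniformization $A = T/L$ at $q_i$ yields a factorization of the local Kummer map through $H^1(\mathbb{Q}_{q_i}, W)$, where $W = T[\lambda] \otimes k$ is the unique stable line. The long exact sequence attached to $0 \to W \to V \to V/W \to 0$, using that the extension is non-split (since otherwise $\mathrm{Frob}_{q_i}$ would act trivially on $V$), shows the image is $1$-dimensional. To check that $\mathcal{L}_{q_i}$ and $H^1_\mathrm{ur}(\mathbb{Q}_{q_i}, V)$ are distinct, I restrict everything to inertia: since $V$ itself is unramified at $q_i$, the connecting map $H^0(I_{q_i}, V/W) \to H^1(I_{q_i}, W)$ vanishes and hence $H^1(I_{q_i}, W) \hookrightarrow H^1(I_{q_i}, V)$ is injective, so a ramified $W$-class stays ramified in $V$; since the kernel of $H^1(W) \to H^1(V)$ is exactly the unramified line $H^1_\mathrm{ur}(W)$, the image $\mathcal{L}_{q_i}$ is represented entirely by ramified cocycles and therefore avoids $H^1_\mathrm{ur}(V)$.

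Case (5) is entirely analogous to (3), with $W$ now the toric stable line of the Tate uniformization of $E$ (which coincides with that of $A$ at $2$, since level raising at the $q_i$ does not disturb the local representation at primes $p \mid N$). The main obstacle is case (4): Bloch--Kato theory identifies $\mathcal{L}_2$ with the image of the flat cohomology of the N\'eron model $\mathcal{A}[\lambda]$, so one must show $\mathcal{A}[\lambda] \cong \mathcal{E}[2] \otimes_{\mathbb{F}_2} k$ as finite flat group schemes over $\mathbb{Z}_2$. Their generic fibers agree by the mod-$2$ congruence, but finite flat models at $\ell = 2$ are famously not unique (see Remark \ref{rem:2splits}). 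Assumption \ref{ass:main} (4), that $\bar\rho|_{G_{\mathbb{Q}_2}}$ is nontrivial, is precisely what is needed to invoke uniqueness of the finite flat prolongation (via Raynaud's classification of finite flat group schemes over $\mathbb{Z}_2$ killed by $2$, since the ambiguity is concentrated on the trivial locus); applying this yields the desired isomorphism of finite flat group schemes and hence the claimed identification of $\mathcal{L}_2$ with $H^1_\mathrm{fl}(\Spec \mathbb{Z}_2, \mathcal{E}[2]) \otimes k$.
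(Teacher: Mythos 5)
Your handling of parts (1)–(3) is essentially sound and matches the paper's structure, modulo one cosmetic slip in (2): an order-$2$ element of $\GL_2(\FF_2)$ is unipotent, not diagonalizable, so there is no ``basis diagonalizing this action''; a direct computation with $c = \left(\begin{smallmatrix}1&1\\0&1\end{smallmatrix}\right)$ still gives $V^c = (1+c)V$, and the conclusion $H^1(\RR,V)=0$ holds. Your argument in (3) via the long exact sequence and restriction to inertia is a clean reformulation of the paper's explicit cocycle computation. Also note that the uniform ``self-duality via a principal polarization of $A$'' is shaky ($A$ is generally not principally polarized, and the pairing on $V$ is the one transported from $E$); the paper instead deduces $\cL_v = \cL_v^\perp$ from $\cL_v(A)=\cL_v(E)$ and local Tate duality for $E$ in cases (1), (2), (4), (5), and from a dimension count in case (3). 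This is a minor point but worth flagging.

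The genuine gaps are in (4) and (5).

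In (4), you invoke ``Raynaud's classification of finite flat group schemes over $\ZZ_2$ killed by $2$'' to get uniqueness of the finite flat prolongation, remarking that the ambiguity is ``concentrated on the trivial locus.'' This does not work as stated. Raynaud's uniqueness theorem requires $e < p - 1$, which fails for $p = 2$, and at $p=2$ finite flat models of $E[2]$ are in general \emph{not} unique. The paper's proof has to split into two sub-cases: (a) when $E$ is supersingular at $2$, the $\FF_2[I]$-module $E[2]$ is absolutely irreducible, and one uses Raynaud's \emph{irreducibility} criterion (\cite[3.3.2.3$^\circ$]{Raynaud1974}) for uniqueness of the model over the strict henselization, then descends; (b) when $E$ is ordinary at $2$, one cannot appeal to Raynaud at all, and instead one must show that a finite flat model which is an extension of $\ZZ/2\ZZ\otimes k$ by $\mu_2\otimes k$ is determined by its generic fiber, using injectivity of $\Ext_{\ZZ_2}(\ZZ/2\ZZ,\mu_2)\to\Ext_{\QQ_2}(\ZZ/2\ZZ,\mu_2)$ (identified with $\ZZ_2^\times/(\ZZ_2^\times)^2\hookrightarrow\QQ_2^\times/(\QQ_2^\times)^2$), and then Assumption~\ref{ass:main}(4) to pin down the generic extension class uniquely. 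Your proposal has no trace of the ordinary sub-case, which is exactly where the difficulty lies.

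In (5), you claim the argument is ``entirely analogous to (3).'' It is not, for the crucial reason that in (3) the sign at $q_i$ is prescribed to be $+1$, so $A$ has \emph{split} toric reduction, the Tate torus $T$ is split, and $H^1(\QQ_{q_i},T)=0$ by Hilbert~90. In (5) there is no control on the sign of $A$ at $2$ (level raising at the $q_i$ does \emph{not} preserve the sign at $2$; one of the signs at $p\|N$ may be unconstrained). If $A$ has non-split toric reduction at $2$, the torus is a quadratic twist $T(\chi)$, Hilbert~90 no longer kills $H^1(\QQ_2,T(\chi))$, and the paper needs a substantially different argument: it computes $\dim H^1(\QQ_2,V)=4$ and $\dim\im(H^1(\QQ_2,W)\to H^1(\QQ_2,V))=2$, shows $\cL_2$ is maximal isotropic of dimension $2$, and then proves the containment $\cL_2\supseteq\im(H^1(\QQ_2,W)\to H^1(\QQ_2,V))$ by a norm calculation on $T(\chi)$ over $\QQ_4/\QQ_2$ that crucially uses Assumption~\ref{ass:main}(3) that $\bar\rho$ is ramified at $2$. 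This entire non-split case is missing from your proposal.
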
 

\begin{proof}
  (1) The fact that $\mathcal{L}_v=H^1_\mathrm{ur}(\mathbb{Q}_v,V)$ follows from \cite[Lemma 6]{Gross2012} and Remark \ref{rem:componentgroup}. 

  (2) By Remark \ref{rem:negativedisc}, the complex conjugation $c$ acts nontrivially on $V$, so $H^1(\mathbb{R}, V)=V^c/(1+c)V=0$.

  (3) Write $q=q_i$ and $\varepsilon=\varepsilon_i$ for short. Our argument closely follows the proof of \cite[Lemma 8]{Gross2012}. Let $\mathcal{A}/\mathbb{Z}_q$ be the Neron model of $A/\mathbb{Q}_q$. Let $\mathcal{A}^0/\mathbb{F}_q$ be the identity component of the special fiber of $\mathcal{A}$. Since $A$ is an isogeny factor of the new quotient of $J_0(Nq_1\cdots q_m)$, it has purely toric reduction at $q$: $\mathcal{A}^0/\mathbb{F}_q$ is a torus that is split over $\mathbb{F}_{q^2}$ and it is split over $\mathbb{F}_q$ if and only if $\varepsilon=+1$.  By the Neron mapping property, $\mathcal{O}_F$ acts on $\mathcal{A}^0$ and makes the character group $X^*(\mathcal{A}^0/\mathbb{F}_q) \otimes \mathbb{Q}$ a 1-dimensional $F$-vector space. Let $T/\mathbb{Q}_{q}$ be the split torus with character group $X^*(\mathcal{A}^0/\mathbb{F}_q)$. Then $\mathcal{O}_F$ naturally acts on $T$ (dual to the action on the character group).

By the theory of $q$-adic uniformization, we have a $G_{\mathbb{Q}_q}$-equivariant exact sequence $$0\rightarrow\Lambda\rightarrow T(\overline{\mathbb{Q}_q})\rightarrow A(\overline{\mathbb{Q}_q})\rightarrow0,$$ where $\Lambda$ is a free $\mathbb{Z}$-module with trivial $G_{\mathbb{Q}_q}$-action. Since $\mathcal{O}_F$ is a maximal order, $\Lambda$ is a locally free $\mathcal{O}_F$-module of rank one. Consider the following commutative diagram $$\xymatrix{ T(\mathbb{Q}_q) \otimes \mathcal{O}_F/\lambda \ar[r] \ar[d] & H^1(\mathbb{Q}_q, T[\lambda]) \ar[d] \\ A(\mathbb{Q}_q)  \otimes \mathcal{O}_F/\lambda \ar[r] &H^1(\mathbb{Q}_q, A[\lambda])}.$$ Here the horizontal arrows are the local Kummer maps and the vertical maps are induced by the $q$-adic uniformization. The left vertical map is surjective since its cokernel lies in $H^1(\mathbb{Q}_q,\Lambda)=\Hom(G_{\mathbb{Q}_q},\Lambda)$, which is zero as $\Lambda$ is torsion-free. The top horizontal map is also surjective since its cokernel maps into $H^1(\mathbb{Q}_q, T)$, which is zero by Hilbert 90 as $T$ is a split torus. It follows that $$\mathcal{L}_q=\im \left(H^1(\mathbb{Q}_q, T[\lambda])\rightarrow H^1(\mathbb{Q}_q,A[\lambda])\right).$$ Also, because $\Lambda$ has no $\lambda$-torsion, we see that $T[\lambda]\rightarrow A[\lambda]$ is a $G_{\mathbb{Q}_q}$-equivariant injection. But since $\Frob_q$ is assumed to have order 2 acting on  $V=A[\lambda]$, $V$ has a unique $G_{\mathbb{Q}_q}$-stable line $W$. Therefore $$\mathcal{L}_q=\im(H^1(\mathbb{Q}_q,W)\rightarrow H^1(\mathbb{Q}_q,V)).$$ 

It follows from Lemma \ref{lem:dimension} that $H^1(\mathbb{Q}_q,V)$ is 2-dimensional and $H^1_\mathrm{ur}(\mathbb{Q}_q,V)$ is 1-dimensional. A class $c\in H^1(\mathbb{Q}_q,W)=\Hom(G_{\mathbb{Q}_q},W)$ is determined by its image on $\sigma$ (a lift of $\Frob_v$) and a tame generator $\tau$. Suppose $E[2](\mathbb{Q}_q)=\langle P\rangle$, then the class $c(\tau)=0$, $c(\sigma)=P$ is cohomologous to zero in $H^1(\mathbb{Q}_v,V)$ (equal to the coboundary of a non $\mathbb{Q}_q$-rational point in $E[2]$). We see that $\mathcal{L}_q$ is generated by the class $c(\tau)= P$, $c(\sigma)=0$. So $\mathcal{L}_q$ is 1-dimensional and $\mathcal{L}_q\cap H^1_\mathrm{ur}(\mathbb{Q}_q,V)=0$. This finishes the proof.
  
  (4) Let $\mathcal{E}/\mathbb{Z}_2$ be the Neron model of $E/\mathbb{Q}_2$ and $\mathcal{A}/\mathbb{Z}_2$ be the Neron model of $A/\mathbb{Q}_2$. We claim that $\mathcal{E}[2]\otimes k=\mathcal{A}[\lambda]$ over $\mathbb{Z}_2$ (extending the isomorphism $E[2] \otimes k=A[\lambda]$).

  First consider the case that $E$ is supersingular at 2.  Let $W$ be the strict henselization of $\mathbb{Z}_2$. Let $F$ be the fraction field of $W$ and $I$ be the absolute Galois group of $F$ (i.e., the inertia subgroup at 2). Notice $E[2]$ is an irreducible $\mathbb{F}_2[I]$-module (\cite[p.275, Prop.12]{Serre1972}, see also \cite[Theorem 1.1]{Conrad1997}), hence by \cite[3.3.2.3$^\circ$]{Raynaud1974}, we know that $E[2]$ has a unique finite flat model over $W$. Since the descent datum from $W$ to $\mathbb{Z}_2$ is determined by that of the generic fiber, $E[2]$ has a \emph{unique} finite flat model over $\mathbb{Z}_2$ as well. Now $E[2] \otimes k$ is a direct sum of $[k:\mathbb{F}_2]$ copies of $E[2]$, by the standard 5-lemma argument (\cite[Prop 4.2.1]{Tate1997}), we know that $E[2] \otimes k$ also has a unique finite flat model over $\mathbb{Z}_2$. We conclude that this unique finite flat model of $E[2]\otimes k$ must be isomorphic to $\mathcal{E}[2] \otimes k=\mathcal{A}[\lambda]$.

  Now consider the case that $E$ is ordinary at 2. Then $\mathcal{E}[2]$ is an extension of $\mathbb{Z}/2 \mathbb{Z}$ by $\mu_2$ over $\mathbb{Z}_2$. Notice $b_2\equiv a_2\not\equiv0 \pmod{\lambda}$ by construction, we know that $\mathcal{A}[\lambda]$ is also ordinary, i.e., an extension of $\mathbb{Z}/2 \mathbb{Z} \otimes k$ by $\mu_2 \otimes k$ over $\mathbb{Z}_2$. To show that  $\mathcal{E}[2]\otimes k=\mathcal{A}[\lambda]$, it suffices to show that $E[2] \otimes k= A[\lambda]$ has a unique finite flat model over $\mathbb{Z}_2$ that is an extension of  $\mathbb{Z}/2 \mathbb{Z} \otimes k$ by $\mu_2 \otimes k$. This is true because of Assumption \ref{ass:main} (4) that $G_{\mathbb{Q}_2}$ acts nontrivially on $E[2]$. In fact, the generic fiber map $$\Ext_{\mathbb{Z}_2}(\mathbb{Z}/2 \mathbb{Z},\mu_2)\rightarrow\Ext_{\mathbb{Q}_2}(\mathbb{Z}/2 \mathbb{Z},\mu_2)$$ between the extension groups in the category of fppf sheaves of $\mathbb{Z}/2 \mathbb{Z}$-modules can be identified with the natural map $$H^1_\mathrm{fppf}(\mathbb{Z}_2,\mu_2)\cong \mathbb{Z}_2^\times/(\mathbb{Z}_2^\times)^2\rightarrow H^1_\mathrm{fppf}(\mathbb{Q}_2,\mu_2)\cong \mathbb{Q}_2^\times/(\mathbb{Q}_2^\times)^2.$$ This map is injective. As a direct sum of $[k:\mathbb{F}_2]^2$ copies of this map, it follows that $$\Ext_{\mathbb{Z}_2}(\mathbb{Z}/2 \mathbb{Z} \otimes k, \mu_2 \otimes k)\rightarrow\Ext_{\mathbb{Q}_2}(\mathbb{Z}/2 \mathbb{Z} \otimes k, \mu_2 \otimes k)$$ is also injective, which means that the extension class of such a finite flat model $\mathcal{V}$ of $E[2] \otimes k$ is determined by the extension class of the generic fiber of $\mathcal{V}$. But  $G_{\mathbb{Q}_2}$ acts nontrivially on $E[2]$, there is a unique $\mathbb{F}_2$-subspace of dimension $[k:\mathbb{F}_2]$ in $E[2] \otimes k$ with trivial $G_{\mathbb{Q}_2}$-action, so the extension class of the generic fiber of $\mathcal{V}$ is uniquely determined by $E[2] \otimes k$, as desired.

  In both cases, we have $\mathcal{E}[2]\otimes k= \mathcal{A}[\lambda]$. Now by \cite[Lemma 7]{Gross2012}, we know that $$\mathcal{L}_v=H^1_\mathrm{fl}(\mathbb{Z}_2, \mathcal{A}[\lambda])=H^1_\mathrm{fl}(\mathbb{Z}_2, \mathcal{E}[2]) \otimes k.$$ 

  (5) By Assumption \ref{ass:main} (3), there exists a unique $G_{\mathbb{Q}_2}$-stable line $W$ in $V$. If $A$ has split toric reduction at 2, the claim follows from the same argument as in (3) using the 2-adic uniformization of $A/\mathbb{Q}_2$. Now let us assume that $A$ has non-split toric reduction. Since $G_{\mathbb{Q}_2}$ acts on $V$ nontrivially and the image of $\bar\rho|_{\mathbb{Q}_2}$ has order 2, one easily sees that $\dim H^1(\mathbb{Q}_2, V)=4$ by the Euler characteristic formula and $$\dim \im(H^1(\mathbb{Q}_2, W)\rightarrow H^1(\mathbb{Q}_2,V))=2$$ by the long exact sequence in Galois cohomology associated to the short exact sequence $$0\rightarrow W\rightarrow V\rightarrow W/V\rightarrow0.$$ Since $\mathcal{L}_2$ is a maximal isotropic subspace of $H^1(\mathbb{Q}_2,V)$ by the local Tate duality for $A$, we know that $\dim\mathcal{L}_2=2$, half of the dimension of $H^1(\mathbb{Q}_2, V)$. To prove the claim, it suffices to show that $\mathcal{L}_2$ contains $\im(H^1(\mathbb{Q}_2, W)\rightarrow H^1(\mathbb{Q}_2,V))$.

Let $T$ be the split torus over $\mathbb{Q}_2$ with character group $X^*(\mathcal{A}^0/\mathbb{F}_2)$. Let $\chi$ be the unramified quadratic character $\chi: \Gal(\mathbb{Q}_4/\mathbb{Q}_2)\rightarrow\{\pm1\}$ and $T(\chi)$ be the $\chi$-twist of $T$. We have a $G_{\mathbb{Q}_2}$-equivariant exact sequence $$0\rightarrow\Lambda(\chi)\rightarrow T(\chi)(\overline{\mathbb{Q}_2})\rightarrow A(\overline{\mathbb{Q}_2})\rightarrow0,$$ where $\Lambda$ is a locally free $\mathcal{O}_F$-module of rank one with trivial $G_{\mathbb{Q}_2}$-action. As in (3), consider the following commutative diagram $$\xymatrix{ T(\chi)(\mathbb{Q}_2) \otimes \mathcal{O}_F/\lambda \ar[r] \ar[d] & H^1(\mathbb{Q}_2, T(\chi)[\lambda]) \ar[d] \\ A(\mathbb{Q}_2)  \otimes \mathcal{O}_F/\lambda \ar[r] &H^1(\mathbb{Q}_2, A[\lambda])}.$$ Since the image of the right vertical arrow is $\im(H^1(\mathbb{Q}_2,W)\rightarrow H^1(\mathbb{Q}_2,V))$, we are done if the left vertical arrow is surjective, or equivalently,
\begin{equation}
  \label{eq:nonsplit}
  \ker\left(H^1(\mathbb{Q}_2, \Lambda(\chi))_\lambda\rightarrow H^1(\mathbb{Q}_2, T(\chi))_\lambda\right) \otimes \mathcal{O}_F/\lambda
\end{equation} is zero. Since $H^1(\mathbb{Q}_4, \Lambda(\chi))=0$ ($\Lambda$ is torsion-free) and $H^1(\mathbb{Q}_4, T(\chi))=0$ by Hilbert 90 ($T(\chi)$ splits over $\mathbb{Q}_4$), by inflation-restriction we know that $$H^1(\mathbb{Q}_2,\Lambda(\chi))=H^1(\mathbb{Q}_4/\mathbb{Q}_2, \Lambda(\chi))=\Lambda/2\Lambda,$$ and $$H^1(\mathbb{Q}_2,T(\chi))=H^1(\mathbb{Q}_4/\mathbb{Q}_2, T(\chi)(\mathbb{Q}_4))=T(\mathbb{Q}_2)/\mathbb{N}(T(\mathbb{Q}_4)),$$ where $\mathbb{N}: T(\mathbb{Q}_4)\rightarrow T(\mathbb{Q}_2)$ is the norm map. The domain and target in (\ref{eq:nonsplit})  are finite $\mathcal{O}_{F,\lambda}$-modules of the same size because $\Lambda$ is a locally free $\mathcal{O}_F$-module of rank one. Hence it suffices to show that $$H^1(\mathbb{Q}_2, \Lambda(\chi))_\lambda\rightarrow H^1(\mathbb{Q}_2, T(\chi))_\lambda$$ is surjective, which can be checked after tensoring with $\mathcal{O}_F/\lambda$, i.e., $$\Lambda/\lambda\Lambda\rightarrow T(\mathbb{Q}_2)/\mathbb{N}(T(\mathbb{Q}_4)) \otimes \mathcal{O}_F/\lambda$$ is surjective. Since these are 1-dimensional $k$-vector spaces, it suffices to show this last map is nonzero. We claim that for any $a\in \Lambda-\lambda \Lambda$, we have $a\not\in \mathbb{N}(T(\mathbb{Q}_4))$. This is true because of Assumption \ref{ass:main} (3) that $\bar\rho$ is ramified at 2. In fact, let $\lambda^{-1}{\Lambda}=\{t \in T(\chi): \lambda t\subseteq \Lambda\}$,  then $A[\lambda]\cong \lambda^{-1}\Lambda/\Lambda$ (notice that $\lambda^{-1}\Lambda/\Lambda$ is 2-dimensional over $k$: the torsion subgroup $T(\chi)[\lambda]$ gives a $k$-line in $\lambda^{-1}\Lambda/\Lambda$, whose quotient is isomorphic to $\Lambda/\lambda \Lambda$). We know that $\lambda^{-1}(a)$ generates a ramified extension of $\mathbb{Q}_2$. On the other hand, for any $b\in T(\mathbb{Q}_4)$,  $\lambda^{-1}(\mathbb{N}(b))=\lambda'(\sqrt{\mathbb{N}(b)})$,  where $\lambda'$ is an integral ideal of $\mathcal{O}$ such that  $\lambda\lambda'=(2)$. Since $\mathbb{Q}_2(\sqrt{\mathbb{N}(b)})/\mathbb{Q}_2$ is unramified, we know that $\lambda^{-1}(\mathbb{N}(b))$ generates an unramified extension of $\mathbb{Q}_2$. Therefore $a$ is not of the form $\mathbb{N}(b)$, as desired.

\bigskip 
\noindent Finally, in cases (1-2), (4-5), we have $\mathcal{L}_v=\mathcal{L}_v(E)$  and the claim $\mathcal{L}_v=\mathcal{L}_v^\perp$ follows from the local Tate duality for $E$. In case (3), the claim $\mathcal{L}_v=\mathcal{L}_v^\perp$ is clear since $H^1(\mathbb{Q}_v,V)$ is 2-dimensional.
\end{proof}

\begin{remark}\label{rem:localat2}
When Assumption \ref{ass:main} (4) is not satisfied, it is possible that $\mathcal{L}_2(E)\ne\mathcal{L}_2(A)$ (see Remark \ref{rem:trivialat2}).
\end{remark}

\section{Rank lowering}
\label{Rank lowering}
\begin{lemma}\label{lem:lowering}
  Suppose $\mathcal{L}$ and $\mathcal{L}'$ are two collections of local conditions. Let $w$ be a place of $\mathbb{Q}$.
  \begin{enumerate}
  \item 
    Assume that $\mathcal{L}_v=\mathcal{L}_v'=\mathcal{L}_v^\perp$ for all $v\ne w$.
    Then $\dim H^1_\mathcal{L} (V)$ and $\dim
    H^1_\mathcal{L'}(V)$ differ by at most $\frac{1}{2}\dim
    H^1(\mathbb{Q}_{w}, V)$.
  \item If we further assume that
    \begin{enumerate}
    \item $H^1(\mathbb{Q}_{w}, V)$ is 2-dimensional,
    \item $\mathcal{L}_{w}$, $\mathcal{L}_{w}'$ are distinct lines,
    \item $\res_{w}(H^1_\mathcal{L}(V))\ne0$.
    \end{enumerate}
    Then we have $$\dim H^1_\mathcal{L'}(V)=\dim H^1_\mathcal{L}(V)-1.$$
  \end{enumerate}
\end{lemma}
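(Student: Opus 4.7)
The plan is to sandwich $H^1_\mathcal{L}(V)$ and $H^1_{\mathcal{L}'}(V)$ between two extremal Selmer groups obtained by tightening/loosening only the local condition at $w$. Concretely, let $\mathcal{L}^{(0)}$ (resp.\ $\mathcal{L}^{(\infty)}$) be the collection that agrees with $\mathcal{L}$ at every $v\neq w$ and equals $0$ (resp.\ $H^1(\mathbb{Q}_w, V)$) at $w$. The hypothesis $\mathcal{L}_v=\mathcal{L}_v^\perp$ for $v\neq w$ combined with $0^\perp=H^1(\mathbb{Q}_w,V)$ gives $(\mathcal{L}^{(0)})^\perp=\mathcal{L}^{(\infty)}$.

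For (1), by construction $\mathcal{L}^{(0)}_v\subseteq \mathcal{L}_v, \mathcal{L}'_v\subseteq \mathcal{L}^{(\infty)}_v$ at every place, so
\begin{equation*}
H^1_{\mathcal{L}^{(0)}}(V)\subseteq H^1_\mathcal{L}(V),\,H^1_{\mathcal{L}'}(V)\subseteq H^1_{\mathcal{L}^{(\infty)}}(V).
\end{equation*}
It therefore suffices to show the exact equality $\dim H^1_{\mathcal{L}^{(\infty)}}(V)-\dim H^1_{\mathcal{L}^{(0)}}(V)=\tfrac{1}{2}\dim H^1(\mathbb{Q}_w, V)$. This I would obtain from the Greenberg--Wiles formula, which simplifies because $V$ is self-dual via the Weil pairing and $H^0(\mathbb{Q},V)=0$ by absolute irreducibility of $\bar\rho$. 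Applying it to both $\mathcal{L}^{(\infty)}$ and $\mathcal{L}^{(0)}$ and subtracting, the local terms at $v\neq w$ cancel, leaving exactly $2\bigl(\dim H^1_{\mathcal{L}^{(\infty)}}-\dim H^1_{\mathcal{L}^{(0)}}\bigr)=\dim \mathcal{L}^{(\infty)}_w-\dim \mathcal{L}^{(0)}_w=\dim H^1(\mathbb{Q}_w,V)$.

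For (2), the additional hypotheses force $\dim H^1(\mathbb{Q}_w,V)=2$ and $\dim \mathcal{L}_w=\dim \mathcal{L}'_w=1$, so the interval in (1) has length $1$. The hypothesis $\res_w(H^1_\mathcal{L}(V))\neq 0$ combined with $\res_w(H^1_\mathcal{L}(V))\subseteq \mathcal{L}_w$ (a line) forces $\res_w(H^1_\mathcal{L}(V))=\mathcal{L}_w$, whence $\dim H^1_\mathcal{L}(V)=\dim H^1_{\mathcal{L}^{(0)}}(V)+1=\dim H^1_{\mathcal{L}^{(\infty)}}(V)$. Together with the inclusion this gives $H^1_\mathcal{L}(V)=H^1_{\mathcal{L}^{(\infty)}}(V)$, so in particular $H^1_{\mathcal{L}'}(V)\subseteq H^1_\mathcal{L}(V)$. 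Since $\mathcal{L}_w$ and $\mathcal{L}'_w$ are distinct lines in a 2-dimensional space, $\mathcal{L}_w\cap \mathcal{L}'_w=0$, hence $H^1_\mathcal{L}(V)\cap H^1_{\mathcal{L}'}(V)=H^1_{\mathcal{L}^{(0)}}(V)$; as the intersection already equals $H^1_{\mathcal{L}'}(V)$, we conclude $H^1_{\mathcal{L}'}(V)=H^1_{\mathcal{L}^{(0)}}(V)$, and therefore $\dim H^1_{\mathcal{L}'}(V)=\dim H^1_\mathcal{L}(V)-1$.

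There is no real obstacle once the Greenberg--Wiles identity is invoked; the rest is purely linear algebra manipulating inclusions of Selmer subspaces cut out by the lines $\mathcal{L}_w,\mathcal{L}'_w$ inside the 2-dimensional $H^1(\mathbb{Q}_w, V)$. The only delicate point is verifying that $\mathcal{L}^{(0)}$ and $\mathcal{L}^{(\infty)}$ are legitimate Selmer collections (the unramified condition away from a finite set), which is immediate since they differ from $\mathcal{L}$ only at $w$.
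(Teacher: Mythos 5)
Your proof is correct, and part (1) is exactly the paper's argument: sandwich both Selmer groups between the strict and relaxed conditions at $w$ and compute the exact gap $\tfrac12\dim H^1(\mathbb{Q}_w,V)$ from the Greenberg--Wiles formula applied to the dual pair $(\mathcal{L}^{(0)},\mathcal{L}^{(\infty)})$. In part (2) you diverge in one step, and your route is a little more elementary. The paper establishes $H^1_\mathcal{L}(V)=H^1_{\mathcal{L}^{(\infty)}}(V)$ by invoking global reciprocity ($\sum_v\langle\res_v c_1,\res_v c_2\rangle_v=0$) to show that $\res_w$ of the relaxed Selmer group is totally isotropic for the local Tate pairing, hence at most a line, and then that this line must be $\mathcal{L}_w$. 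You instead get the same equality purely numerically: rank--nullity for $\res_w$ on $H^1_\mathcal{L}(V)$ gives $\dim H^1_\mathcal{L}(V)=\dim H^1_{\mathcal{L}^{(0)}}(V)+1=\dim H^1_{\mathcal{L}^{(\infty)}}(V)$, and the inclusion forces equality; the identification $H^1_{\mathcal{L}'}(V)=H^1_{\mathcal{L}^{(0)}}(V)$ then follows from $\mathcal{L}_w\cap\mathcal{L}'_w=0$ exactly as in the paper. Both arguments are complete; what the paper's reciprocity/isotropy computation buys is that it is the version of the argument that upgrades to the quadratic form $Q_w$ in the rank-raising Lemma \ref{lem:rankraising}, where one must distinguish the two isotropic lines and a pure dimension count no longer suffices, whereas your count is shorter here because hypothesis (c) already pins down the image line.
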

  
\begin{proof}
  (1) Define the strict local conditions $\mathcal{S}$ by $\mathcal{S}_v=\mathcal{L}_v$ for $v\ne w$ and $\mathcal{S}_{w}=0$. Similarly, define the relaxed local conditions $\mathcal{R}$ by $\mathcal{R}_v=\mathcal{L}_v$ for $v\ne w$ and $\mathcal{R}_w=H^1(\mathbb{Q}_w, V)$. Then we have $$H^1_\mathcal{S}(V)\subseteq H^1_\mathcal{L}(V) \subseteq H^1_\mathcal{R}(V),\quad H^1_\mathcal{S}(V)\subseteq H^1_\mathcal{L'}(V) \subseteq H^1_\mathcal{R}(V).$$ The assumptions implies that $\mathcal{R}^\perp=\mathcal{S}$. By \cite[Theorem 2.18]{Darmon1997}, we can compare the dual Selmer groups: $$\frac{\#H^1_\mathcal{S}(V)}{\# H^1_\mathcal{R}(V)}=\prod_v\frac{\# \mathcal{S}_v}{\#H^0(\mathbb{Q}_v, V)}, \quad \frac{\#H^1_\mathcal{R}(V)}{\# H^1_\mathcal{S}(V)}=\prod_v\frac{\# \mathcal{R}_v}{\#H^0(\mathbb{Q}_v, V)}. $$ It follows that $$\dim H^1_\mathcal{R}(V)-\dim H^1_\mathcal{S}(V)=\frac{1}{2}(\dim \mathcal{R}_w-\dim \mathcal{S}_w)=\frac{1}{2}\dim H^1(\mathbb{Q}_w,V).$$ So the first claim is proved.

  (2) Let $c_1,c_2\in H^1_\mathcal{L}(V)$, then $$\sum_v \langle \res_v(c_1),\res_v(c_2)\rangle_v=0$$ by global class field theory. The assumption $\mathcal{L}_v=\mathcal{L}_v^\perp$ implies that $$\langle \res_v(c_1),\res_v(c_2)\rangle_v=0,\quad v\ne w.$$ Hence $\langle \res_w(c_1),\res_w(c_2)\rangle_w=0$ as well. It follows that $\res_w(H^1_\mathcal{L}(V))$ is a totally isotropic subspace of $H^1(\mathbb{Q}_w, V)$ for the pairing $\langle\ ,\ \rangle_w$. The same argument shows that $\res_w(H^1_\mathcal{L'}(V))$ and $\res_w(H^1_\mathcal{R}(V))$ are also totally isotropic subspaces of $H^1(\mathbb{Q}_w, V)$. The isotropic subspaces are isotropic lines or zero by (a). Now (c) implies that $\res_w(H^1_\mathcal{L}(V))$ must be the line $\mathcal{L}_w\subseteq H^1(\mathbb{Q}_w, V)$. Thus $\res_w(H^1_\mathcal{R}(V))$ must also be $\mathcal{L}_w$, as it contains $\res_w(H^1_\mathcal{L}(V))$. We thus know that $H^1_\mathcal{L}(V)=H^1_\mathcal{R}(V)$. Notice that $$\res_w(H^1_\mathcal{L'}(V))\subseteq \mathcal{L}'_w\cap \res_w(H^1_\mathcal{R}(V))=\mathcal{L}'_w\cap \mathcal{L}_w,$$ which is zero by (b), we know that $H^1_\mathcal{L'}(V)=H^1_\mathcal{S}(V)$.  The first part tells us that $$\dim H^1_\mathcal{R}(V)-\dim H^1_\mathcal{S}(V)=1.$$ So the desired result is proved.
\end{proof}

\begin{cor}\label{cor:difference}
  Suppose $A$ is obtained from $E$ via level raising at one prime $q$. Then $\dim \Sel(A)$ and $\dim \Sel(E)$ differ by at most 1 (resp. 2) when $\Frob_q$ is of order 2 (resp. 1)  acting on $V$.
\end{cor}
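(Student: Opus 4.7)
The plan is to apply Lemma \ref{lem:lowering}(1) directly, with $w = q$, to the two collections of local conditions $\mathcal{L}=\mathcal{L}(E)$ and $\mathcal{L}'=\mathcal{L}(A)$ viewed inside $H^1(\mathbb{Q},V)$ via the identification $E[2]\otimes k \cong A[\lambda] = V$.

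First I would check the hypothesis of Lemma \ref{lem:lowering}(1): that $\mathcal{L}_v(E) = \mathcal{L}_v(A) = \mathcal{L}_v(E)^{\perp}$ for every place $v \neq q$. The self-duality is part of the conclusion of Lemma \ref{lem:localconditions} in each relevant case. For the equality $\mathcal{L}_v(E) = \mathcal{L}_v(A)$, I would go through the places of $\mathbb{Q}$ case by case. At archimedean $v$, both conditions are $0$ by item (2) of Lemma \ref{lem:localconditions}. At $v=2$, Lemma \ref{lem:localconditions}(4)-(5) describes $\mathcal{L}_2$ purely in terms of the $G_{\mathbb{Q}_2}$-representation $V$ (or the Néron model $\mathcal{E}[2]$), which is the same for $E$ and for $A$; so the two conditions coincide. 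At any finite $v \nmid 2q\infty$ (in particular at every $v \mid N$, since $q\nmid N$), item (1) of Lemma \ref{lem:localconditions} identifies both $\mathcal{L}_v(E)$ and $\mathcal{L}_v(A)$ with $H^1_{\mathrm{ur}}(\mathbb{Q}_v,V)$. This exhausts all $v\neq q$.

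Having checked the hypothesis, Lemma \ref{lem:lowering}(1) gives
\[
\bigl|\dim\Sel(A) - \dim\Sel(E)\bigr| \le \tfrac{1}{2}\dim H^1(\mathbb{Q}_q, V).
\]
Finally I would invoke Lemma \ref{lem:dimension}: since $q$ is a level-raising prime, $\Frob_q$ acts on $V$ with order either $1$ or $2$, giving $\dim H^1(\mathbb{Q}_q,V) = 4$ or $2$ respectively. Dividing by two yields the claimed bounds of $2$ and $1$.

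No part of this is really an obstacle; the corollary is a direct packaging of Lemma \ref{lem:lowering}(1), Lemma \ref{lem:dimension}, and the case analysis of Lemma \ref{lem:localconditions}. The only mild bookkeeping issue is making sure that when $m=0$ (i.e.\ $A = E$) the framework of Section \ref{Preliminaries on local conditions} still applies, which is guaranteed by the convention adopted just before Example \ref{exa:signs}, so no extra argument is required.
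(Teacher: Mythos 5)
Your proposal is correct and is exactly the paper's argument: the paper's proof of this corollary is a one-line appeal to Lemma \ref{lem:lowering}(1), Lemma \ref{lem:localconditions}, and Lemma \ref{lem:dimension}, and you have simply written out the case-by-case verification (which the paper leaves implicit) that $\mathcal{L}_v(E)=\mathcal{L}_v(A)=\mathcal{L}_v^{\perp}$ for all $v\neq q$.
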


\begin{proof}
  This follows immediately from Lemma \ref{lem:lowering} (1), Lemma \ref{lem:localconditions} and Lemma \ref{lem:dimension}.
\end{proof}

\begin{remark}\label{rem:trivialat2}
  The conclusion of Corollary \ref{cor:difference} may fail when Assumption \ref{ass:main} (4) is not satisfied due to the uncertainty of the local conditions at 2. For example, the elliptic curve $E=2351a1: y^2+xy+y=x^3-5x-5$ has trivial $\bar\rho|_{G_{\mathbb{Q}_2}}$. The elliptic curve $A=25861i1: y^2+xy+y=x^3+x^2-17x+30 $ is obtained from $E$ via level raising at $q=11$. One can compute that $\Frob_q$ has order 2 but $\dim\Sel(E)=0$ and $\dim\Sel(A)=2$ differ by 2.
\end{remark}

Recall that $L=\mathbb{Q}(E[2])$. The inflation restriction exact sequence gives us $$0\rightarrow H^1(L/\mathbb{Q}, V)\rightarrow H^1(\mathbb{Q}, V)\rightarrow H^1(L, V)^{\Gal(L/\mathbb{Q})}\rightarrow H^2(L/\mathbb{Q}, V).$$ Since $V$ is the irreducible 2-dimensional representation of $\Gal(L/\mathbb{Q})\cong S_3$, we have $H^1(L/\mathbb{Q}, V)=H^2(L/\mathbb{Q}, V)=0$. Since $G_L$ acts trivially on $V$, we know that $H^1(L, V)=\Hom(G_L, V)$. Therefore we obtain an isomorphism $$H^1(\mathbb{Q}, V)\cong \Hom(G_L, V)^{S_3}.$$  This allows us to view $c\in H^1(\mathbb{Q}, V)$ as a homomorphism $f: G_L\rightarrow V$ that is equivariant under the $S_3$-action. Namely, for any $g\in G_\mathbb{Q}$, $h\in G_L$, we have $$f(ghg^{-1})=\bar g. f(h),$$ where $\bar g$ is the image of $g$ in $\Gal(L/\mathbb{Q})\cong S_3$. 

\begin{lemma}\label{lem:surjectivity}
  Let $c_1,\ldots,c_r\in H^1(\mathbb{Q},V)$ be linearly independent elements. Let $f_1,\ldots, f_r:G_L\rightarrow V$ be the corresponding homomorphisms. Then the homomorphism $$f: G_L \rightarrow V^r, \quad g\mapsto (f_1(g),\ldots,f_r(g))$$ is surjective.
\end{lemma}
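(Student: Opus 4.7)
The plan is to translate surjectivity of $f$ into a statement purely about $\mathbb{F}_2[S_3]$-modules, and then exploit the fact that $V$, viewed as a representation of $S_3\cong\Gal(L/\QQ)$, is the 2-dimensional absolutely irreducible $\mathbb{F}_2[S_3]$-module (the standard representation on $\{(x_1,x_2,x_3)\in\mathbb{F}_2^3:x_1+x_2+x_3=0\}$).

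First I would observe that, because each $f_i$ is $S_3$-equivariant, the image $H:=f(G_L)\subseteq V^r$ is an $\mathbb{F}_2[S_3]$-submodule. Assume toward a contradiction that $H\subsetneq V^r$, so that $V^r/H$ is a nonzero $\mathbb{F}_2[S_3]$-module. The key structural input is that every composition factor of $V^r$ is isomorphic to $V$; hence by Jordan--Hölder the same is true for any subquotient, in particular for $V^r/H$. It follows that the head (maximal semisimple quotient) of $V^r/H$ is a nonzero direct sum of copies of $V$, so there exists a surjection $V^r/H\twoheadrightarrow V$. Composing with the projection $V^r\twoheadrightarrow V^r/H$, we obtain a nonzero $S_3$-equivariant map $\varphi\col V^r\to V$ that vanishes on $H$.

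Next I would use Schur's lemma: since $V$ is absolutely irreducible over $\mathbb{F}_2$, $\End_{\mathbb{F}_2[S_3]}(V)=\mathbb{F}_2$, and therefore
\[
\Hom_{\mathbb{F}_2[S_3]}(V^r,V)\cong \mathbb{F}_2^r,
\]
with the isomorphism sending $(a_1,\dots,a_r)$ to the map $(v_1,\dots,v_r)\mapsto \sum a_i v_i$. Thus $\varphi$ is of this shape for some nonzero tuple $(a_1,\dots,a_r)\in\mathbb{F}_2^r$. Vanishing of $\varphi$ on $H$ means $\sum_i a_i f_i(g)=0$ for all $g\in G_L$, i.e.\ $\sum a_i f_i=0$ in $\Hom_{S_3}(G_L,V)$. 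Under the isomorphism $H^1(\QQ,V)\cong\Hom(G_L,V)^{S_3}$ recalled just before the lemma, this translates to $\sum a_i c_i=0$, contradicting the linear independence of $c_1,\dots,c_r$. Hence $H=V^r$ and $f$ is surjective.

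The only mildly delicate point is the existence of a surjection $V^r/H\twoheadrightarrow V$ despite $\mathbb{F}_2[S_3]$ not being semisimple; this is handled cleanly by the composition-factor observation (composition factors of $V^r$ are all copies of $V$, so the head of any nonzero quotient is a direct sum of $V$'s and cannot be a trivial module). No appeal to any other result in the paper is needed.
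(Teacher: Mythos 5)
Your proof is correct and is in substance the same as the paper's: both arguments reduce to the facts that every $S_3$-composition factor of $V^r$ is a copy of $V$ and that $\Hom_{S_3}(V^r,V)\cong\End_{S_3}(V)^r$ is $r$-dimensional by Schur's lemma, and both convert a failure of surjectivity into a linear relation among the $f_i$, hence among the $c_i$. The difference is only in where the argument is run: the paper works with the image, asserting $f(G_L)\cong V^s$ and counting $\dim\Hom(V^s,V)^{S_3}=s\ge r$, whereas you work with the cokernel, extracting a nonzero $S_3$-map $V^r\to V$ that kills the image. Your version is marginally more careful on one point: the paper's claim that an arbitrary $S_3$-submodule of $V^r$ is isomorphic to some $V^s$ is true but implicitly uses that $V$ is a projective (equivalently injective) module over the group algebra --- e.g.\ because its restriction to a Sylow $2$-subgroup is free, or because $V$ lies in the semisimple block $\Mat_2(\mathbb{F}_2)$ of $\mathbb{F}_2[S_3]$ --- while your head-of-the-cokernel argument needs only Jordan--H\"older. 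One cosmetic remark: here $V=E[2]\otimes k$, so the bookkeeping is cleanest over $k[S_3]$, with $\End_{k[S_3]}(V)=k$ by absolute irreducibility of $E[2]$ and linear independence of the $c_i$ taken over $k$; this changes nothing in your argument (an $\mathbb{F}_2$-relation is in particular a $k$-relation), and the paper is equally casual about $\mathbb{F}_2$ versus $k$ at this point.
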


\begin{proof}
  Since $f_i$ is $S_3$-equivariant, we know that the image of $f$ is a $S_3$-subrepresentation of $V^r$, hence must be isomorphic to $V^s$ for some $s\le r$. Therefore $$f_i\in \Hom( G_L/\ker f, V)^{S_3}\cong \Hom(V^s, V)^{S_3}$$ lies in a $s$-dimensional space. But since $\{c_i\}$ are linearly independent, the homomorphisms $\{f_i\}$ are also linearly independent, we know that $s\ge r$. The surjectivity follows.
\end{proof}

\begin{lemma}\label{lem:loweringdensity}
Suppose $A$ is obtained from $E$ via level raising. Suppose $\Sel(A)\ne0$. Then there exists a positive density set of primes $w$ satisfying the following.
  \begin{enumerate}
  \item $H^1(\mathbb{Q}_w,V)$ is 2-dimensional.
  \item $\res_w(\Sel(A))\ne0$,
  \end{enumerate}  
\end{lemma}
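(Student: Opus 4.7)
The plan is to apply the Chebotarev density theorem to a Galois extension that records both the action on $V$ and the ramification of a chosen nonzero Selmer class. By Lemma \ref{lem:dimension}, condition (1) is equivalent to $\Frob_w$ having order $2$ on $V$, i.e., its image $\bar\sigma_w$ in $\Gal(L/\mathbb{Q})\cong S_3$ being a transposition. I would fix a nonzero $c\in\Sel(A)$ and let $f\colon G_L\to V$ be the $S_3$-equivariant homomorphism corresponding to $c$ under the isomorphism $H^1(\mathbb{Q},V)\cong\Hom(G_L,V)^{S_3}$. By Lemma \ref{lem:surjectivity} (applied with $r=1$), $f$ is surjective. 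Set $K=L^{\ker f}$; then $K/\mathbb{Q}$ is Galois with $\Gal(K/L)\cong V$, and $M:=\Gal(K/\mathbb{Q})$ is an extension of $S_3$ by $V$.

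Because $\ker f$ is $G_\mathbb{Q}$-stable, $c$ descends to a crossed homomorphism $c\colon M\to V$ whose restriction to the normal subgroup $V\subset M$ is the identity. For any prime $w$ unramified in $K$ (all but finitely many), $\res_w(c)$ lies in $H^1_\mathrm{ur}(\mathbb{Q}_w,V)$, and the isomorphism $H^1_\mathrm{ur}(\mathbb{Q}_w,V)\cong V/(\bar\sigma_w+1)V$ (one-dimensional over $k$ when $\bar\sigma_w$ is a transposition, noting $-1=1$ in characteristic $2$) sends $\res_w(c)$ to the class of $c(\sigma_w)$. Thus condition (2) becomes $c(\sigma_w)\notin(\bar\sigma_w+1)V$.

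Next I would fix a transposition $\bar\tau\in S_3$, choose a lift $\tau\in M$, and observe that every lift of $\bar\tau$ has the form $v\tau$ with $v\in V$. Using $c|_V=\mathrm{id}_V$ and the cocycle relation, $c(v\tau)=v+c(\tau)$. As $v$ varies, $v+c(\tau)$ ranges over all of $V$, while $(\bar\tau+1)V$ is a proper $k$-subspace of $V$; hence the lifts $\tau'=v\tau$ with $c(\tau')\notin(\bar\tau+1)V$ form a nonempty subset. A short cocycle computation gives $c(g\tau'g^{-1})\equiv \bar g\,c(\tau')\pmod{(1+\bar g\bar\tau\bar g^{-1})V}$ for all $g\in M$, and since $\bar g$ induces an isomorphism $V/(1+\bar\tau)V\xrightarrow{\sim}V/(1+\bar g\bar\tau\bar g^{-1})V$, the condition ``$c(\tau')\notin(1+\bar\tau')V$'' is invariant under $M$-conjugation. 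The good lifts therefore constitute a nonempty union of $M$-conjugacy classes, and the Chebotarev density theorem produces a positive density of primes $w$ whose Frobenius $\sigma_w$ lies in this set, simultaneously fulfilling (1) and (2).

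The substantive input is the surjectivity of $f$ supplied by Lemma \ref{lem:surjectivity}: it ensures $\Gal(K/L)$ is large enough to realize $c(\sigma_w)$ outside the proper subspace $(\bar\sigma_w+1)V$. The remaining content is standard Chebotarev, plus the bookkeeping of how $c$ transforms under $M$-conjugation; the only mild caveat is excluding the finitely many primes at which $c$ ramifies, which does not affect positive density.
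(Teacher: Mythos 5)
Your proof is correct and follows essentially the same route as the paper: both arguments use Lemma \ref{lem:surjectivity} to adjust a lift of a transposition so that the chosen nonzero Selmer class survives restriction, and then apply Chebotarev in the field cut out by $f$ and $L$. Your criterion $c(\sigma_w)\notin(1+\bar\sigma_w)V$ is equivalent to the paper's condition $f(\Frob_w^2)\ne 0$, since $f(\sigma_w^2)=(1+\bar\sigma_w)c(\sigma_w)$ and $\ker(1+\bar\sigma_w)=\im(1+\bar\sigma_w)$ for a transposition acting on $V$ in characteristic $2$.
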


\begin{proof}
  By Lemma \ref{lem:dimension}, the first condition that $H^1(\mathbb{Q}_w,V)$ is 2-dimensional is equivalent to that $\Frob_w\in\Gal(L/\mathbb{Q})\cong S_3$ has order 2.

  Let $c\in \Sel(A)\subseteq H^1(\mathbb{Q}, V)$ be a nonzero class. Let $f:G_L\rightarrow V$ be the corresponding homomorphism. We claim that there exists $g\in G_\mathbb{Q}$ such that $\bar g$ has order 2 and $f(g^2)\ne0$. Take any transposition in $S_3$ and lift it to some $g\in G$. We are done if $f(g^2)\ne0$. Otherwise, since $V$ is a 2-dimensional irreducible representation of $S_3$, we know there exists $v\in V$ such that $\bar g. v+ v\ne0$. By Lemma \ref{lem:surjectivity}, we can choose $h\in G_L$ such that $f(h)= v$. Let $g'=gh\in G_\mathbb{Q}$. Then $\bar g'$ has order 2 and $$f(g'^2)=f(ghgh)=f(ghg^{-1}\cdot g^2\cdot h)=\bar g.f(h)+f(g^2)+f(h)=\bar g. f(h)+f(h).$$  Therefore $f(g'^2)=\bar g. v+v\ne0$ and the claim is proved.

  It follows form the previous claim and the Chebotarev density theorem that there exists a positive density set of primes $w$, such that $\Frob_w$ has order 2 in $\Gal(L/\mathbb{Q})$ and $f(\Frob_w^2)\ne0$. Let $u$ be a prime of $L$ over $w$. Since $$H^1(L_u/\mathbb{Q}_w, V)=H^2(L_u/\mathbb{Q}_w, V)=0,$$ we know that $$\res_w: H^1(\mathbb{Q}, V)\rightarrow H^1(\mathbb{Q}_w,V)$$ can be identified as $$\Hom(G_L, V)^{S_3}\rightarrow \Hom(G_{L_u}, V)^{\Gal(L_u/\mathbb{Q}_w)},\quad f\mapsto f|_{G_{L_u}}$$ by restricting $f$ to the decomposition group $G_{L_u}$. Therefore $\res_w(c)=f|_{G_{L_u}}\ne0$, as $f(\Frob_w^2)\ne0$. This completes the proof. 
\end{proof}

\begin{proposition}\label{pro:ranklowering}
  Suppose $A$ is obtained from $E$ via level raising at primes $q_1,\ldots,q_m$ ($m\ge0$) such that for any $i\le m$,
  \begin{enumerate}
  \item $H^1(\mathbb{Q}_{q_i},V)$ is 2-dimensional.
  \item $\varepsilon_{i}=\varepsilon_i(A)=+1$, and
  \item $\dim\Sel(A)\ge 1$.
  \end{enumerate}
Then there exists a positive density set of primes $q_{m+1}$ and $A'$ obtained from $E$ via level raising at primes $q_1,\ldots,q_m, q_{m+1}$ such that
  \begin{enumerate}
  \item $H^1(\mathbb{Q}_{q_{m+1}},V)$ is 2-dimensional.
  \item $\varepsilon_i'=\varepsilon_i(A')=+1$ for any $i\le m+1$, and  
  \item $\dim\Sel(A')=\dim\Sel(A)-1$.
  \end{enumerate}
\end{proposition}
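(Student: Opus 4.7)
The plan is to choose $q_{m+1}$ from the positive density set provided by Lemma \ref{lem:loweringdensity}, construct $A'$ via the level raising Theorem \ref{thm:levelraising}, and then apply Lemma \ref{lem:lowering}(2) to conclude that the Selmer rank drops by exactly one.

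Since $\dim\Sel(A)\ge 1$, Lemma \ref{lem:loweringdensity} gives a positive density set of primes $w$ with $H^1(\mathbb{Q}_w,V)$ two-dimensional and $\res_w(\Sel(A))\ne 0$. Discarding finitely many primes, we may assume any such $w$ satisfies $w\nmid 2Nq_1\cdots q_m$, so $w$ is a level raising prime for $E$ with $\Frob_w$ of order two. Setting $q_{m+1}=w$, we apply Theorem \ref{thm:levelraising} to the list of level raising primes $q_1,\ldots,q_{m+1}$ with prescribed signs $\varepsilon_i'=+1$ for all $i\le m+1$ (the signs at $p\mid\mid N$ may be chosen arbitrarily — the ``all but possibly one'' clause causes no trouble, as we will see). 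This produces a modular abelian variety $A'$ obtained from $E$ via level raising at $q_1,\ldots,q_{m+1}$ with $\varepsilon_i(A')=+1$ for each $i$.

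Next I verify the hypotheses of Lemma \ref{lem:lowering}(2) for $\mathcal{L}=\mathcal{L}(A)$, $\mathcal{L}'=\mathcal{L}(A')$, and distinguished place $w=q_{m+1}$. By Lemma \ref{lem:localconditions}, each local condition equals its own annihilator, and the condition at $v$ depends only on $V$, on the reduction type of the abelian variety at $v$, and, where relevant, on the sign at $v$. The conditions for $A$ and $A'$ therefore coincide at every $v\ne q_{m+1}$: at the archimedean place both are $0$; at $p\mid\mid N$ and at primes unramified for both, the condition is $H^1_\ur(\mathbb{Q}_v,V)$ by Lemma \ref{lem:localconditions}(1) independently of any sign; at $v=2$ the condition depends only on $E$ (via Lemma \ref{lem:localconditions}(4)--(5)); and at $v=q_i$ with $i\le m$ the condition is determined by $V$ together with $\varepsilon_i=\varepsilon_i'=+1$ via Lemma \ref{lem:localconditions}(3). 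Thus $\mathcal{L}_v(A)=\mathcal{L}_v(A')=\mathcal{L}_v(A)^\perp$ for every $v\ne q_{m+1}$.

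At $w=q_{m+1}$ itself, Lemma \ref{lem:localconditions}(1) gives $\mathcal{L}_w(A)=H^1_\ur(\mathbb{Q}_w,V)$, while Lemma \ref{lem:localconditions}(3) (applicable because $\Frob_w$ has order two and $\varepsilon_{m+1}'=+1$) gives $\mathcal{L}_w(A')=\im(H^1(\mathbb{Q}_w,W)\to H^1(\mathbb{Q}_w,V))$ and asserts that these are \emph{distinct} lines in the two-dimensional space $H^1(\mathbb{Q}_w,V)$. Combined with $\res_w(\Sel(A))\ne 0$, all three hypotheses of Lemma \ref{lem:lowering}(2) are met, yielding $\dim\Sel(A')=\dim\Sel(A)-1$. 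The substantive point is precisely the distinctness $\mathcal{L}_w(A)\ne\mathcal{L}_w(A')$, which is where the prescribed sign $\varepsilon_{m+1}=+1$ is essential — signs are invisible to the mod $2$ congruence, so the ability to pin this sign down relies crucially on the level raising Theorem \ref{thm:levelraising}. This is the only nontrivial obstacle, and it has already been overcome in the previous sections.
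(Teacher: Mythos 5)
Your proposal is correct and follows essentially the same route as the paper: select $q_{m+1}$ via Lemma \ref{lem:loweringdensity}, construct $A'$ with all signs $+1$ via Theorem \ref{thm:levelraising}, match the local conditions away from $q_{m+1}$ and establish distinctness at $q_{m+1}$ via Lemma \ref{lem:localconditions}, then apply Lemma \ref{lem:lowering}(2). Your additional remarks (that the signs at $p\mid\mid N$ are harmless because the local conditions there are $H^1_{\mathrm{ur}}$ regardless of sign, and that the distinctness of the two lines at $q_{m+1}$ is the crux requiring the prescribed sign) are accurate elaborations of what the paper leaves implicit.
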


\begin{proof}
Lemma \ref{lem:loweringdensity} ensures the existence of a positive density set of primes $w=q_{m+1}$ such that $H^1(\mathbb{Q}_w,V)$ is 2-dimensional and $\res_w(\Sel(A))\ne0$. For such $w$, we choose $A'$ using Theorem \ref{thm:levelraising} with the prescribed signs $\varepsilon_i=+1$ ($i\le m+1$). Then the local conditions $\mathcal{L}=\mathcal{L}(A)$ and $\mathcal{L}'=\mathcal{L}(A')$ satisfies $\mathcal{L}_v=\mathcal{L}_v'=\mathcal{L}_v^\perp$ for $v\ne q_{m+1}$ by Lemma \ref{lem:localconditions}. For $w=q_{m+1}$, $\mathcal{L}_w$ and $\mathcal{L}_w'$ are distinct lines by Lemma \ref{lem:localconditions} as well. Now we can apply Lemma \ref{lem:lowering} to conclude that $\dim\Sel(A')=\dim\Sel(A)-1$.
\end{proof}

\begin{theorem}\label{thm:ranklowering}
    Suppose $E/\mathbb{Q}$ satisfies Assumption \ref{ass:main}. Then for any given integer $0\le n<\dim\Sel(E)$, there exists infinitely many abelian varieties $A/\mathbb{Q}$ obtained from $E/\mathbb{Q}$ via level raising, such that $$\dim\Sel(A)=n.$$ 
\end{theorem}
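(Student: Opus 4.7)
The plan is a straightforward descending induction built on Proposition \ref{pro:ranklowering}, which is the one-step rank-lowering statement. Set $r = \dim\Sel(E) - n \ge 1$. I will construct, by induction on $k = 0, 1, \ldots, r$, an abelian variety $A_k$ obtained from $E$ via level raising at $k$ primes $q_1, \ldots, q_k$, with all prescribed signs $\varepsilon_i(A_k) = +1$, each $q_i$ satisfying that $H^1(\mathbb{Q}_{q_i}, V)$ is $2$-dimensional, and $\dim\Sel(A_k) = \dim\Sel(E) - k$.

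For the base case $k=0$, take $A_0 = E$, which is admissible under the convention that $E$ itself counts as obtained from $E$ via level raising at $m=0$ primes. Hypotheses (1) and (2) of Proposition \ref{pro:ranklowering} are then vacuous, and (3) reads $\dim\Sel(E) \ge 1$, which follows from $n < \dim\Sel(E)$. For the inductive step, assume $A_k$ has been constructed with $k < r$. Then $\dim\Sel(A_k) = \dim\Sel(E) - k > n \ge 0$, so all hypotheses of Proposition \ref{pro:ranklowering} are satisfied and the proposition produces a prime $q_{k+1}$ (from a positive density set) together with an abelian variety $A_{k+1}$ of the required form. After $r$ iterations, $A_r$ realizes $\dim\Sel(A_r) = n$, proving existence.

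To upgrade to infinitely many such $A$, I will use that at the last step the prime $q_r$ ranges over a positive density set of primes. Distinct choices of $q_r$ produce level-raised newforms $g$ whose conductors are divisible by distinct primes, hence correspond to pairwise non-isogenous abelian varieties $A_r$. This yields infinitely many isogeny classes of $A$ obtained from $E$ via level raising with $\dim\Sel(A) = n$.

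Essentially no obstacle remains at the level of the theorem itself: all the real work has been packaged into Proposition \ref{pro:ranklowering}, which in turn relies on Theorem \ref{thm:levelraising} (to prescribe all signs $+1$ and thereby keep the inductive hypothesis on signs propagating), on Lemma \ref{lem:localconditions} (to identify the local conditions at the new level raising prime in terms of $V$ alone), and on the Chebotarev/$S_3$-equivariance argument of Lemma \ref{lem:loweringdensity} (to ensure a positive density set of primes at which the Selmer class restricts nontrivially). The induction itself is purely formal once these ingredients are in place, and the positivity of density is precisely what converts ``one abelian variety'' into ``infinitely many.''
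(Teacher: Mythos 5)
Your proof is correct and follows exactly the paper's route: the paper's own proof is the one-line "induction on the number of level raising primes $m$ using Proposition \ref{pro:ranklowering}," and you have simply spelled out that induction (base case $A_0=E$, descent by one at each step) together with the standard observation that the positive density of admissible primes at the final step yields infinitely many non-isogenous $A$.
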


\begin{proof}
  It follows immediately from Proposition \ref{pro:ranklowering} by induction on the number of level raising primes $m$.
\end{proof}

\section{Rank raising}
\label{Rank raising}
To raise the rank, we need more refined control over the local conditions. For this purpose, we not only need the bilinear pairing $\langle\ , \rangle_v$, but also a quadratic form $Q_v$ giving rise to it. To define $Q_v$, first recall that the line bundle $\mathcal{L}=\mathcal{O}_E(2\infty)$ on $E$ induces a degree 2 map $$E\rightarrow \mathbb{P}^1=\mathbb{P}(H^0(E,\mathcal{L})).$$ For $P\in E$, let $\tau_P$ be the translation by $P$ on $E$. Since for $P\in E[2]$, $\tau_P^* \mathcal{L}\cong \mathcal{L}$, the translation by $E[2]$ induces an action of $E[2]$ on $\mathbb{P}^1$, i.e., a homomorphism $E[2]\rightarrow \PGL_2$. The short exact sequence $$0\rightarrow \mathbb{G}_m\rightarrow \GL_2\rightarrow \PGL_2\rightarrow 0$$ induces the connecting homomorphism in nonabelian Galois cohomology $$H^1(\mathbb{Q}, \PGL_2)\rightarrow H^2(\mathbb{Q},\mathbb{G}_m).$$

\begin{definition}
We define $Q$ to be the composition $$Q: H^1(\mathbb{Q}, E[2])\rightarrow H^1(\mathbb{Q}, \PGL_2)\rightarrow H^2(\mathbb{Q}, \mathbb{G}_m).$$For a place $v$ of $\mathbb{Q}$, we denote its restriction by $$Q_v: H^1(\mathbb{Q}_v, E[2])\rightarrow H^1(\mathbb{Q}_v, \PGL_2)\rightarrow H^2(\mathbb{Q}_v, \mathbb{G}_m).$$ By local class field theory, $H^2(\mathbb{Q}_v, \mathbb{G}_m)\cong \mathbb{Q}/\mathbb{Z}$ and so $Q_v$ takes value in $H^2(\mathbb{Q}_v, \mathbb{G}_m)[2]\cong\mathbb{Z}/2 \mathbb{Z}$. By \cite[\S 4]{ONeil2002}, $Q_v$ is an quadratic form and extending scalars we obtain a quadratic form $$Q_v: H^1(\mathbb{Q}_v, V)\rightarrow k,$$ whose associated bilinear form is given by $\langle\ , \ \rangle_v$.  
\end{definition}

\begin{definition}
We say a subspace $W\subseteq H^1(\mathbb{Q}_v, V)$ is \emph{totally isotropic for $Q_v$} if $Q_v|_W=0$. We say $W$ is \emph{maximal totally isotropic} if it is totally isotropic and $W=W^\perp$.
\end{definition}

\begin{remark}\label{rem:poonenrains}
The local condition $\mathcal{L}_v=\mathcal{L}_v(E)$ is maximal totally isotropic for $Q_v$ by \cite[Prop. 4.11]{Poonen2012} (this is also implicit in \cite[Prop. 2.3]{ONeil2002}).
\end{remark}

\begin{remark}\label{rem:isotropic}
 As $\mathrm{char}(k)=2$, the requirement $Q_v|_W=0$ is stronger than $\langle\ ,\ \rangle_v|_W=0$. For example, if $\dim H^1(\mathbb{Q}_v,V)=2$, then all three lines in $H^1(\mathbb{Q}_v,V)$ are isotropic for $\langle\ ,\ \rangle_v$, but only two of them are isotropic for $Q_v$ (since $(H^1(\mathbb{Q}_v,V), Q_v)$ is isomorphic to $(k^2, xy)$ as quadratic spaces).
\end{remark}

We replace the role of bilinear form $\langle\ ,\  \rangle_v$ by the quadratic form $Q_v$ and obtain the following more refined result analogous to Lemma \ref{lem:lowering}.

\begin{lemma}\label{lem:rankraising}
  Suppose $\mathcal{L}$ and $\mathcal{L}'$ are two collections of local conditions. Let $w$ be a place of $\mathbb{Q}$. Assume that
  \begin{enumerate}
  \item  $\mathcal{L}_v=\mathcal{L}_v'$ are maximal totally isotropic for $Q_v$ (for any $v\ne w$),
  \item $H^1(\mathbb{Q}_{w}, V)$ is 2-dimensional,
  \item $\mathcal{L}_{w}$, $\mathcal{L}_{w}'$ are distinct lines and are both isotropic for $Q_w$.
  \end{enumerate}
  Then $$\dim H^1_\mathcal{L'}(V)=\dim H^1_\mathcal{L}(V)\pm1.$$
  Moreover, $\res_w( H^1_\mathcal{L}(V))=0$ if and only if $$\dim H^1_\mathcal{L'}(V)=\dim H^1_\mathcal{L}(V)+1.$$
\end{lemma}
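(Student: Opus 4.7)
The plan is to mirror the strict/relaxed Selmer argument used in the proof of Lemma \ref{lem:lowering}, but with the quadratic form $Q_v$ replacing the role of the bilinear pairing. This upgrade is forced: in characteristic $2$ the pairing $\langle\cdot,\cdot\rangle_w$ alone cannot distinguish $\mathcal{L}_w$ from $\mathcal{L}'_w$ (every line is automatically $\langle\cdot,\cdot\rangle_w$-isotropic by alternativity), whereas $Q_w$ picks out exactly the two isotropic lines hypothesized.

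First I would introduce the strict and relaxed local conditions $\mathcal{S}$ and $\mathcal{R}$ agreeing with $\mathcal{L}$ away from $w$, with $\mathcal{S}_w = 0$ and $\mathcal{R}_w = H^1(\mathbb{Q}_w, V)$. Since $\mathcal{L}_v = \mathcal{L}_v^\perp$ for $v \ne w$ (part of the maximal $Q_v$-isotropy hypothesis), the Greenberg--Wiles computation already carried out in the proof of Lemma \ref{lem:lowering}(1) yields
\[
\dim H^1_\mathcal{R}(V) - \dim H^1_\mathcal{S}(V) \;=\; \tfrac{1}{2}\dim H^1(\mathbb{Q}_w, V) \;=\; 1.
\]

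The main new input is the global reciprocity for $Q$: for every $c \in H^1(\mathbb{Q}, V)$,
\[
\sum_v Q_v(\res_v c) = 0 \quad \text{in } k.
\]
Indeed $Q$ factors through $H^2(\mathbb{Q}, \mathbb{G}_m)$, so this follows from exactness of $0 \to \mathrm{Br}(\mathbb{Q}) \to \bigoplus_v \mathrm{Br}(\mathbb{Q}_v) \to \mathbb{Q}/\mathbb{Z} \to 0$ on pure tensors, with the cross terms in a $k$-linear combination handled by classical Tate reciprocity for $\langle\cdot,\cdot\rangle$. Applying this to $c \in H^1_\mathcal{R}(V)$, and using that $\res_v c \in \mathcal{L}_v$ is totally $Q_v$-isotropic for $v \ne w$, forces $Q_w(\res_w c) = 0$; so $\res_w(H^1_\mathcal{R}(V))$ sits inside the $Q_w$-isotropic locus of the $2$-dimensional space $H^1(\mathbb{Q}_w, V)$.

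Finally, the two distinct isotropic lines $\mathcal{L}_w, \mathcal{L}'_w$ force $(H^1(\mathbb{Q}_w, V), Q_w)$ to be hyperbolic with precisely these two isotropic lines. Since $\res_w(H^1_\mathcal{R}(V))$ has dimension exactly $1$ by the displayed equation, it must equal either $\mathcal{L}_w$ or $\mathcal{L}'_w$. In the former case $H^1_\mathcal{L}(V) = H^1_\mathcal{R}(V)$ while $H^1_{\mathcal{L}'}(V) = H^1_\mathcal{S}(V)$ (because $\mathcal{L}_w \cap \mathcal{L}'_w = 0$), giving $\dim H^1_{\mathcal{L}'}(V) = \dim H^1_\mathcal{L}(V) - 1$ and $\res_w(H^1_\mathcal{L}(V)) = \mathcal{L}_w \ne 0$; the latter case is symmetric and produces the $+1$ side with $\res_w(H^1_\mathcal{L}(V)) = 0$. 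Both parts of the lemma follow. The main obstacle I anticipate is the clean formulation and verification of the quadratic reciprocity $\sum_v Q_v \circ \res_v = 0$; once that is granted, everything else is a two-case bookkeeping argument on top of Lemma \ref{lem:lowering}.
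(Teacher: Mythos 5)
Your argument is correct and follows essentially the same route as the paper: the same strict/relaxed Selmer groups, the same Greenberg--Wiles dimension count, the same quadratic reciprocity $\sum_v Q_v(\res_v c) = 0$ to show $\res_w(H^1_\mathcal{R}(V))$ is isotropic, and the same two-case bookkeeping once that restriction is identified with one of the two isotropic lines. The only tiny imprecision is the phrase ``the two distinct isotropic lines force $(H^1(\mathbb{Q}_w,V),Q_w)$ to be hyperbolic with precisely these two isotropic lines'': what actually pins this down is the nondegeneracy of the associated bilinear form $\langle\,,\,\rangle_w$ (local Tate duality), which excludes the totally degenerate case $Q_w\equiv 0$ where every line is isotropic; the paper records exactly this observation in Remark~\ref{rem:isotropic}.
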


\begin{proof}
  By the proof of Lemma \ref{lem:lowering} (1), we obtain that $$H^1_\mathcal{S}(V)\subseteq H^1_\mathcal{L}(V) \subseteq H^1_\mathcal{R}(V),\quad H^1_\mathcal{S}(V)\subseteq H^1_\mathcal{L'}(V) \subseteq H^1_\mathcal{R}(V)$$ and $$\dim H^1_\mathcal{R}(V)=\dim H^1_\mathcal{S}(V)+1,$$ since $\dim H^1(\mathbb{Q}_w, V)=2$. By global class field theory, for any class $c\in H^1(\mathbb{Q},V)$, we have $$\sum_v Q_v(\res_v(c))=0.$$ The assumption that $\mathcal{L}_v$ is totally isotropic for $Q_v$ (for any $v\ne w$)  implies that $Q_w(\res_w(c))=0$ for any $c\in H^1_\mathcal{L}(V)$. In other words, the image $\res_w(H^1_\mathcal{L}(V))$ is a totally isotropic subspace for $Q_w$. Similarly, the image of $H^1_{\mathcal{L}'}(V)$, $H^1_\mathcal{R}(V)$ under $\res_w$ are all totally isotropic subspaces for $Q_w$. Since $H^1_\mathcal{R}(V)\ne H^1_\mathcal{S}(V)$ and $H^1(\mathbb{Q}_w, V)$ is 2-dimensional, we know that $\res_w(H^1_\mathcal{R}(V))$ must be an isotropic line for $Q_w$. But there are exactly two isotropic lines for $Q_w$ (see Remark \ref{rem:isotropic}), which must be $\mathcal{L}_w$ and $\mathcal{L}_w'$ since they are assumed to be distinct. When $\res_w(H^1_\mathcal{L}(V))=\mathcal{L}_w$, it follows that $\res_w(H^1_\mathcal{R}(V))=\mathcal{L}_w$ and  $$H^1_\mathcal{R}(V)=H^1_\mathcal{L}(V),\quad H^1_\mathcal{S}(V)=H^1_{\mathcal{L}'}(V).$$ When $\res(H^1_\mathcal{L}(V))=0$, it follows that $\res_w(H^1_\mathcal{R}(V))=\mathcal{L}_w'$ and $$H^1_\mathcal{R}(V)=H^1_{\mathcal{L}'}(V),\quad H^1_\mathcal{S}(V)=H^1_\mathcal{L}(V).$$ This finishes the proof.
\end{proof}

\begin{lemma}\label{lem:isotropic}
 Suppose $w\nmid 2N\infty$ is a prime such that $H^1(\mathbb{Q}_w,V)$ is 2-dimensional. Let $\mathcal{L}_w'=\im(H^1(\mathbb{Q}_w, W)\rightarrow H^1(\mathbb{Q}_w, V))$, where $W$ is the unique $G_{\mathbb{Q}_w}$-stable line in $V$. If $\Frob_w^2$ is sufficiently close 1 (depending only on $E$), then $\mathcal{L}_w'$ is an isotropic line for $Q_w$.
\end{lemma}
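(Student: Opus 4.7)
The plan is to make $Q_w$ explicit on $\mathcal{L}_w'$ via the Heisenberg central extension and then reduce the isotropy statement to a splitting condition for $\Frob_w$ on the $4$-torsion. Recall that by definition of $Q$, for the line bundle $\mathcal{L} = \mathcal{O}_E(2O)$, the induced map $E[2] \to \PGL_2$ pulls back the extension $1 \to \GG_m \to \GL_2 \to \PGL_2 \to 1$ to a central extension $1 \to \GG_m \to \cG \to E[2] \to 1$, and $Q$ is precisely the non-abelian connecting map that measures the obstruction to lifting a class in $H^1(\QQ, E[2])$ through $\cG$.

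Since $\Frob_w$ acts on $V$ with order $2$, the stable line $W = V^{\Frob_w}$ is one-dimensional and carries trivial Galois action. The commutator pairing in $\cG$ is the Weil pairing, which is alternating and hence vanishes on $W \times W$; consequently $\cG|_W$ is commutative and is classified by an element $\xi \in \Ext^1_{\QQ\text{-grp}}(\ZZ/2, \GG_m) \cong \QQ^\times/\QQ^{\times 2}$. For $c \in \mathcal{L}_w'$ with any lift $\tilde c \in H^1(\QQ_w, W) \cong \QQ_w^\times/\QQ_w^{\times 2}$, the connecting map identifies $Q_w(c)$ with the local Hilbert symbol $(\tilde c, \xi)_w$. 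The kernel of $H^1(\QQ_w, W) \to H^1(\QQ_w, V)$ is the unramified line, so this Hilbert symbol descends to a well-defined function on $\mathcal{L}_w'$ once we verify that $\xi$ is a $w$-unit.

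To identify $\xi$ explicitly, write $E: y^2 = f(x)$ with $f(x) = (x - e_1)(x - e_2)(x - e_3)$ and take $W$ to be generated by $P_i = (e_i, 0)$, where $e_i \in \QQ_w$ is the unique $\Frob_w$-fixed root. Translation by $P_i$ descends to the involution of $\mathbb{P}^1 = \mathbb{P}(H^0(E, \mathcal{L}))$ swapping $\infty \leftrightarrow e_i$ and $e_j \leftrightarrow e_k$. A direct computation (or the duplication formula) shows its fixed points are the roots of $(X - e_i)^2 = f'(e_i)$, namely the $x$-coordinates of the $4$-torsion points $Q$ with $2Q = P_i$. Any trace-zero lift $M \in \GL_2$ of this involution therefore satisfies $M^2 = f'(e_i) \cdot I$, giving $\xi \equiv f'(e_i) \pmod{\QQ^{\times 2}}$. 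Since $w \nmid 2N$, $f'(e_i) = (e_i - e_j)(e_i - e_k) \in \ZZ_w^\times$ is a unit, and $Q_w|_{\mathcal{L}_w'} = 0$ becomes $f'(e_i) \in (\QQ_w^\times)^2$, equivalently $\sqrt{f'(e_i)} = x(Q) - e_i \in \QQ_w$ for some $Q \in E[4]$ above $P_i$.

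Finally, I would show that this holds whenever $\Frob_w^2$ acts trivially on $E[4]$, a Chebotarev condition on $\Frob_w$ in the finite extension $\QQ(E[4])/\QQ$ depending only on $E$. Choose a basis $\{\epsilon_1, \epsilon_2\}$ of $E[4] \cong (\ZZ/4)^2$ with $2\epsilon_1 = P_i$; then $\bar\sigma = \overline{\Frob_w}$ acts on $E[2]$ as $\left(\begin{smallmatrix} 1 & 1 \\ 0 & 1 \end{smallmatrix}\right)$, and writing $\sigma = \Frob_w = \left(\begin{smallmatrix} 1 + 2a & 1 + 2b \\ 2c & 1 + 2d \end{smallmatrix}\right)$ with $a, b, c, d \in \FF_2$, the relation $\sigma^2 \equiv I \pmod 4$ forces $c = 0$ and $a + d = 1$. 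Hence $\sigma(\epsilon_1) = (1 + 2a)\epsilon_1 = \pm \epsilon_1$, so $\sigma$ fixes $x(\epsilon_1) \in \QQ_w$ and therefore $\sqrt{f'(e_i)} \in \QQ_w$. The main obstacle in the plan is the Heisenberg-class computation of the previous paragraph: reconciling the non-abelian definition of $Q$ with the explicit class $f'(e_i) \pmod{\QQ^{\times 2}}$, which can be handled either by Zarhin's construction of $Q$ via the theta group or by the Weierstrass-coordinate formulas of Poonen--Rains.
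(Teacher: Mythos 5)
Your argument is correct, but it reaches the conclusion by a genuinely different route from the paper's. The paper computes $Q_w$ on the generator of $\mathcal{L}_w'$ head-on: it writes out the twisted form of $\mathbb{P}^1$ attached to the cocycle $c(\sigma)=0$, $c(\tau)=P$, observes that triviality amounts to finding $x$ with $\sigma(x)=x$ and $(\tau(x)-\alpha_1)(x-\alpha_1)=(\alpha_1-\alpha_2)(\alpha_1-\alpha_3)$, and then produces such an $x$ as $y+\alpha_1$ with $y$ a norm from the ramified quadratic extension, using that $\alpha_1-\alpha_2\in(L_u^\times)^2$ when $\Frob_w^2$ is close to $1$. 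You instead restrict the theta group to $W$, note it is commutative there because the Weil pairing is alternating, identify its class as $f'(e_i)=(e_i-e_j)(e_i-e_k)$ modulo squares via the trace-zero matrix of the involution (your Cayley--Hamilton computation $M^2=f'(e_i)I$ is right), and conclude that $Q_w|_{\mathcal{L}_w'}$ is the Hilbert symbol against $f'(e_i)$; the well-definedness check against the unramified kernel of $H^1(\mathbb{Q}_w,W)\to H^1(\mathbb{Q}_w,V)$ is also correct since $w\nmid 2N$ makes $f'(e_i)$ a unit. Both proofs ultimately hinge on the same quantity $f'(e_i)$ being a square in $\mathbb{Q}_w^\times$; your Chebotarev condition ($\Frob_w^2$ trivial on $E[4]$, verified by the mod-$4$ matrix computation forcing $\sigma(\epsilon_1)=\pm\epsilon_1$) and the paper's (splitting in $L(\sqrt{\alpha_1-\alpha_2})/L$) are interchangeable realizations of ``$\Frob_w^2$ sufficiently close to $1$''. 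What your approach buys is a cleaner, if-and-only-if criterion and a conceptual explanation of why only the discriminant-like unit $f'(e_i)$ matters; what it costs is the one step you flag yourself, namely justifying that $Q_w$ restricted to the image of $H^1(\mathbb{Q}_w,W)$ agrees with the connecting map of the commutative sub-extension and hence with the cup product (Hilbert symbol) against its class --- this is standard and is exactly the kind of computation carried out in O'Neil and Poonen--Rains, but it should be cited or spelled out; also note that the extension of $W$ by $\GG_m$ and its class live over $\mathbb{Q}_w$, not over $\mathbb{Q}$, since $W$ is only $G_{\mathbb{Q}_w}$-stable.
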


\begin{proof}
  By the proof of Lemma \ref{lem:localconditions} (3), we know that the line $\mathcal{L}_w'$ is generator by the class represented by the cocycle $c(\sigma)=0$, $c(\tau)=P$, where $\sigma$ is a lift of of $\Frob_w$, $\tau$ is a generator of the tame quotient $\Gal(\mathbb{Q}_w^t/\mathbb{Q}_w^\mathrm{ur})$  and $P$ is a generator of $E[2](\mathbb{Q}_w)$.

  We provide an explicit way to compute its image under $Q_w: H^1(\mathbb{Q}_w, E[2])\rightarrow H^1(\mathbb{Q}_w, \PGL_2)$. Recall that $H^1(\mathbb{Q}_w, \PGL_2)$ classifies forms of $\mathbb{P}^1$, i.e., algebraic varieties $S/\mathbb{Q}_w$ which become isomorphic to $\mathbb{P}^1$ over $\overline{\mathbb{Q}}_w$. For any cocycle $c$, the corresponding form $S$ can be described as follows. As a set, $S=\mathbb{P}^1(\overline{\mathbb{Q}}_w)$. The Galois action of $g\in G_{\mathbb{Q}_w}$ on $x\in S$ is given by $g. x=c(g).g(x)$. The cocycle $c$ is the trivial class in $H^1(\mathbb{Q}_w, \PGL_2)$ if and only if $S(\mathbb{Q}_w)\ne\varnothing$.

  For our specific cocycle $c(\sigma)=0$, $c(\tau)=P$, the corresponding form $S$ has a $\mathbb{Q}_w$-rational point if and only if there exists $x\in \mathbb{P}^1(\mathbb{Q}_w^t)$ such that $$\sigma(x)=x,\quad P. \tau(x)=x.$$ Suppose $E$ has a Weierstrass equation $y^2=F(x)$, where $F(x)\in \mathbb{Q}(x)$ is a monic irreducible cubic polynomial. Let $\alpha_1,\alpha_2,\alpha_3$ be the three roots of $F(x)$. We fix a embedding $\overline{\mathbb{Q}}\hookrightarrow \overline{\mathbb{Q}}_w$ and view $\alpha_i$ as elements in $\overline{\mathbb{Q}}_w$. Without loss of generality, we may assume that $\alpha_1\in \mathbb{Q}_w$ and thus $P=(\alpha_1,0)$. Then the action of $P$ on $\mathbb{P}^1$ is an involution that swaps $\alpha_1\leftrightarrow\infty$, $\alpha_2\leftrightarrow\alpha_3$. One can compute explicitly that this involution is given by the linear fractional transformation $$x\mapsto \frac{\alpha_1 x+(\alpha_2\alpha_3-\alpha_1\alpha_2-\alpha_1\alpha_3)}{x-\alpha_1}.$$ Therefore $Q_w(c)=0$ if and only if there exists $x\in \mathbb{P}^1(\mathbb{Q}_w^t)$ such that
  \begin{equation}
    \label{eq:fixedpoint}
    \sigma(x)=x,\quad (\tau(x)-\alpha_1)(x-\alpha_1)=(\alpha_1-\alpha_2)(\alpha_1-\alpha_3).
  \end{equation}

  Let $u$ be the prime of $L$ over $w$ induced by our fixed embedding $\overline{\mathbb{Q}}\hookrightarrow \overline{\mathbb{Q}}_w$. When $\Frob_w^2$ is sufficiently close to 1 (depending only on $E$), $u$ splits in the quadratic extension $L(\sqrt{\alpha_1-\alpha_2})/L$. Therefore $\alpha_1-\alpha_2\in (L_u^\times)^2$. The element $(\alpha_1-\alpha_2)(\alpha_1-\alpha_3)$, as the norm of $\alpha_1-\alpha_2$ from $L_u^\times$ to $\mathbb{Q}_w^\times$, must lie in $(\mathbb{Q}_w^\times)^2$. Let $\mathbb{Q}_w(\sqrt{\pi})$ be the tamely ramified quadratic extension fixed by $\sigma$. Then the image of the norm map $$\mathbb{N}: \mathbb{Q}_w(\sqrt{\pi})^\times\rightarrow \mathbb{Q}_w^\times,\quad y\mapsto y\cdot \tau(y)$$ has index two in $\mathbb{Q}_w^\times$ by local class field theory, and thus contains $(\mathbb{Q}_w^\times)^2$. So we can find $y\in \mathbb{Q}_w(\sqrt{\pi})^\times$ such that $\mathbb{N}(y)=(\alpha_1-\alpha_2)(\alpha_1-\alpha_3)$. Now $x=y+\alpha_1$ satisfies Equation (\ref{eq:fixedpoint}) and hence $Q_w(c)=0$. It follows that $\mathcal{L}_w'$ is an isotropic line for $Q_w$, as desired.
\end{proof}

\begin{lemma}\label{lem:raisingdensity}
Suppose $A$ is obtained from $E$ via level raising. Then there exists a positive density set of primes $w$ satisfying the following.
  \begin{enumerate}
  \item $H^1(\mathbb{Q}_w,V)$ is 2-dimensional.
  \item Let $\mathcal{L}_w'=\im(H^1(\mathbb{Q}_w, W)\rightarrow H^1(\mathbb{Q}_w, V))$, where $W$ is the unique $G_{\mathbb{Q}_w}$-stable line in $V$.  Then $\mathcal{L}_w'$ is an isotropic line for $Q_w$. 
  \item $\res_w(\Sel(A))=0$,
  \end{enumerate}
\end{lemma}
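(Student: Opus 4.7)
The plan is to translate the three conditions into a single Chebotarev condition on Frobenius in a carefully chosen finite Galois extension of $\mathbb{Q}$, and then exhibit complex conjugation as an element of the target conjugacy class using Assumption \ref{ass:main}(5). First, I would encode the Selmer group globally: fix a basis $c_1, \ldots, c_r$ of $\Sel(A)$ and let $f_i \colon G_L \to V$ be the associated $S_3$-equivariant homomorphisms under $H^1(\mathbb{Q}, V) \cong \Hom(G_L, V)^{S_3}$. By Lemma \ref{lem:surjectivity}, $f = (f_1, \ldots, f_r) \colon G_L \to V^r$ is surjective, and because $S_3$-equivariance makes $\ker f$ stable under $G_\mathbb{Q}$-conjugation, the fixed field $N = \overline{\mathbb{Q}}^{\ker f}$ is Galois over $\mathbb{Q}$ with $\Gal(N/L) \cong V^r$. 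Next, I would let $L_0 = L(\sqrt{\alpha_i - \alpha_j})_{i \ne j}$, a finite Galois extension of $\mathbb{Q}$ depending only on $E$, and set $N' = N \cdot L_0$.

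Next I would re-express the three conditions as properties of $\Frob_w$ in $\Gal(N'/\mathbb{Q})$. Condition (1) is exactly that the image of $\Frob_w$ in $\Gal(L/\mathbb{Q}) = S_3$ has order 2; assuming this, $L_u/\mathbb{Q}_w$ is unramified of degree 2 and $\Frob_u$ is represented by $\Frob_w^2 \in G_{L_u}$. Via the isomorphism $H^1(\mathbb{Q}_w, V) \cong \Hom(G_{L_u}, V)^{\Gal(L_u/\mathbb{Q}_w)}$ together with the triviality of inertia on $N$ and on $L_0$, the identification $\res_w(c_i) = f_i|_{G_{L_u}}$ shows that (3) is equivalent to $\Frob_w^2$ acting trivially on $N$; by construction of $L_0$, the analogous condition on $L_0$ forces $\alpha_i - \alpha_j$ to be a square in $L_u$ for every $i \ne j$ (in particular for the $\mathbb{Q}_w$-rational one), which by Lemma \ref{lem:isotropic} gives (2). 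So (2) and (3) together collapse to the single requirement that $\Frob_w^2 = 1$ in $\Gal(N'/\mathbb{Q})$.

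Finally, Assumption \ref{ass:main}(5) enters decisively: since $\Delta < 0$, the Weierstrass cubic has one real root and two complex conjugate ones, so for any embedding $\overline{\mathbb{Q}} \hookrightarrow \mathbb{C}$, complex conjugation $\iota$ restricts to the transposition of the two non-real roots on $L$; and since $\iota$ has order $2$ in $G_\mathbb{Q}$ and $N' \not\subset \mathbb{R}$ (as $L \not\subset \mathbb{R}$), its image $\tilde\iota \in \Gal(N'/\mathbb{Q})$ has order exactly $2$ and lifts a transposition in $S_3$. The Chebotarev density theorem applied to the conjugacy class of $\tilde\iota$, on the cofinite set of primes unramified in $N'$ and not dividing $2N$, then yields a positive-density set of $w$ with $\Frob_w$ conjugate to $\tilde\iota$. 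Such $w$ map to a transposition of $S_3$ (giving (1)) and satisfy $\Frob_w^2 = 1$ in $\Gal(N'/\mathbb{Q})$ (giving (2) and (3) by the previous paragraph).

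The step I expect to be the main subtlety is precisely the production of an order $2$ lift of a transposition in $\Gal(N'/\mathbb{Q})$: unlike the case of $N$ alone, where $\Gal(N/L) \cong V^r$ satisfies $H^2(S_3, V^r) = 0$ (via the Hochschild--Serre argument using $\gcd(|A_3|, |V^r|)=1$) and so the extension splits, once one enlarges to $N'$ the module $\Gal(L_0/L)$ can have nontrivial $A_3$-invariants coming from the $A_3$-orbit structure on $\{\sqrt{\alpha_i - \alpha_j}\}$, so the cohomological obstruction in $H^2(S_3, \Gal(N'/L))$ need not vanish and no order $2$ lift is guaranteed purely formally. It is precisely the negativity of the discriminant that supplies complex conjugation as such a lift, and makes the Chebotarev step work.
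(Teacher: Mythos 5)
Your proof is correct and follows the same route as the paper: the paper's proof likewise requires $\Frob_w$ to lie in the conjugacy class of complex conjugation in the compositum of $L$, the field cut out by the Selmer homomorphisms $f_i$, and the field needed for Lemma \ref{lem:isotropic}, using $\Delta<0$ to ensure complex conjugation is a transposition on $E[2]$ and $\iota^2=1$ to force $\Frob_w^2$ to act trivially. Your write-up just makes explicit the finite extension $N'$ and the translation of each condition into a Chebotarev condition, which the paper leaves implicit in the phrase ``sufficiently close to the class of the complex conjugation.''
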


\begin{proof}

Observe that for primes $w$ such that $\Frob_w$ is sufficiently close to the class of the complex conjugation (depending only on $A$ and $E$), we have 
\begin{enumerate}
\item $\Frob_w\in \Gal(L/\mathbb{Q})$ has order 2 since we assumed $\Delta<0$ (Remark \ref{rem:negativedisc}). So $H^1(\mathbb{Q}_w,V)$ is 2-dimensional by Lemma \ref{lem:dimension}.
\item $\mathcal{L}_w'$ is an isotropic line for $Q_w$, by Lemma \ref{lem:isotropic}.
\item  Let $c_1,\ldots, c_r$ be a $k$-basis of $\Sel(A)$. Let $f_i: G_L\rightarrow V$ be the homomorphisms corresponding to $c_i$. Then $f_i(\Frob_w^2)=0$, hence $\res_w(c_i)=0$ for any $i\le r$. This is satisfied if $\Frob_w^2$ is trivial on the field cut out by the homomorphisms $f_1,\ldots,f_r$, which is a condition depending only on $A$.
\end{enumerate}
 The Chebotarev density theorem now finishes the proof.
\end{proof}

\begin{proposition}\label{pro:rankraising}
  Suppose $A$ is obtained from $E$ via level raising at primes $q_1,\ldots,q_m$ ($m\ge0$) satisfying that for any $i\le m$,
  \begin{enumerate}
  \item $H^1(\mathbb{Q}_{q_i},V)$ is 2-dimensional,
  \item $\varepsilon_{i}=\varepsilon_i(A)=+1$, and
  \item $\mathcal{L}_{q_i}=\mathcal{L}_{q_i}(A)$ is an isotropic line for $Q_{q_i}$.
  \end{enumerate}
  Then there exists a positive density set of primes $q_{m+1}$ and $A'$ obtained from $E$ via level raising at primes $q_1,\ldots,q_m, q_{m+1}$ satisfying that
  \begin{enumerate}
  \item   $H^1(\mathbb{Q}_{q_{m+1}},V)$ is 2-dimensional,
  \item  $\varepsilon_{i}'=\varepsilon_i(A')=+1$ for any $i\le m+1$,
  \item $\mathcal{L}_{q_i}'=\mathcal{L}_{q_i}(A')$ is an isotropic line for $Q_{q_i}$ for any $i\le m+1$, and
  \item  $\dim\Sel(A')=\dim\Sel(A)+1.$
  \end{enumerate}
\end{proposition}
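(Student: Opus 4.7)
My plan is to combine the three main tools developed in this section. First I would apply Lemma \ref{lem:raisingdensity} to $A$ to produce a positive density set of primes $w$ such that $H^1(\mathbb{Q}_w,V)$ is two-dimensional, the line $\mathcal{L}_w':=\im(H^1(\mathbb{Q}_w,W)\to H^1(\mathbb{Q}_w,V))$ (where $W$ is the unique $G_{\mathbb{Q}_w}$-stable line in $V$) is isotropic for $Q_w$, and $\res_w(\Sel(A))=0$. By Lemma \ref{lem:dimension} any such $w$ has $\Frob_w$ of order two, so it is in particular a level raising prime for $E$. I would then set $q_{m+1}=w$ and invoke Theorem \ref{thm:levelraising} to construct $A'$ via level raising at $q_1,\dots,q_{m+1}$ with all $\varepsilon_i(A')=+1$; the signs at primes $p||N$ are irrelevant to the argument, since by Lemma \ref{lem:localconditions}(1) the local conditions at such primes depend only on $V$.

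The core of the argument is to feed $\mathcal{L}=\mathcal{L}(A)$, $\mathcal{L}'=\mathcal{L}(A')$, and the prime $w$ into Lemma \ref{lem:rankraising}. The hypotheses to check are: (a) $\mathcal{L}_v(A)=\mathcal{L}_v(A')$ is maximal totally isotropic for $Q_v$ at every $v\ne w$; (b) $H^1(\mathbb{Q}_w,V)$ is two-dimensional; (c) $\mathcal{L}_w(A)$ and $\mathcal{L}_w(A')$ are distinct isotropic lines for $Q_w$. For (a), Lemma \ref{lem:localconditions} matches the local conditions at $v=\infty$, at $v=2$, at $v\nmid 2q_1\cdots q_m\infty$, and at $v=q_i$ for $i\le m$ (since both $A$ and $A'$ have sign $+1$ at such $q_i$). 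Maximal total isotropy for $Q_v$ comes from Remark \ref{rem:poonenrains} at primes where $\mathcal{L}_v=\mathcal{L}_v(E)$, and from hypothesis (3) of the proposition combined with the self-duality $\mathcal{L}_v=\mathcal{L}_v^\perp$ from Lemma \ref{lem:localconditions}(3) at $v=q_i$. Condition (b) is built into the choice of $w$. For (c): $\mathcal{L}_w(A)=H^1_\mathrm{ur}(\mathbb{Q}_w,V)$ by Lemma \ref{lem:localconditions}(1) while $\mathcal{L}_w(A')=\mathcal{L}_w'$ by Lemma \ref{lem:localconditions}(3), these are distinct by the last assertion of Lemma \ref{lem:localconditions}(3), and both are isotropic for $Q_w$ (the former by Remark \ref{rem:poonenrains}, the latter by the choice of $w$).

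With the hypotheses verified, Lemma \ref{lem:rankraising} gives $\dim\Sel(A')=\dim\Sel(A)\pm 1$, and its ``moreover'' clause, together with $\res_w(\Sel(A))=0$, pins the sign down to $+1$. The three conclusions on $A'$ then follow immediately: (1) is the choice of $w$; (2) is the sign prescription fed into Theorem \ref{thm:levelraising}; (3) at $i\le m$ follows from $\mathcal{L}_{q_i}(A)=\mathcal{L}_{q_i}(A')$ together with the hypothesis, and at $i=m+1$ it is the isotropy of $\mathcal{L}_w'$ built into the choice of $w$. I do not foresee a genuine obstacle beyond carefully orchestrating the lemmas: the real technical content, namely producing primes along which the Steinberg-type line is isotropic for the quadratic form $Q_w$ (not merely for the bilinear form $\langle\,,\,\rangle_w$) and achieving the sign prescriptions at $\ell=2$, is already absorbed into Lemma \ref{lem:raisingdensity} and Theorem \ref{thm:levelraising}.
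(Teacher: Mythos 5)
Your proposal is correct and follows essentially the same route as the paper's own proof: choose $w=q_{m+1}$ via Lemma \ref{lem:raisingdensity}, construct $A'$ with all signs $+1$ via Theorem \ref{thm:levelraising}, match the local conditions away from $w$ using Lemma \ref{lem:localconditions} and Remark \ref{rem:poonenrains}, and conclude with Lemma \ref{lem:rankraising} and its ``moreover'' clause. Your verification of the hypotheses is, if anything, slightly more explicit than the paper's.
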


\begin{proof}
There exists a positive density of primes $w=q_{m+1}$ satisfying the conditions in Lemma \ref{lem:raisingdensity}. For such $w$, we choose $A'$ using Theorem \ref{thm:levelraising} such that $\varepsilon_i'=+1$ for any $i\le m+1$. Let $\mathcal{L}=\mathcal{L}(A)$ and $\mathcal{L}'=\mathcal{L}(A')$. For $v=q_i$ ($i\le m$), we have $\dim H^1(\mathbb{Q}_v, V)=2$ and $\mathcal{L}_v=\mathcal{L}_v'$ is an isotropic line for $Q_v$ by the assumption and Lemma \ref{lem:localconditions}. Moreover, $\mathcal{L}_w$ and $\mathcal{L}_w'$ are distinct isotropic lines for $Q_w$ by Lemma \ref{lem:raisingdensity} and Lemma \ref{lem:localconditions}.  Then the conclusion (1-3) follows.  Notice $\mathcal{L}_v=\mathcal{L}_v'$ are maximal totally isotropic for $v\nmid q_1\cdots q_{m+1}$ by Lemma \ref{lem:localconditions} and Remark \ref{rem:poonenrains}. We can apply Lemma \ref{lem:rankraising} to obtain conclusion (4) .
\end{proof}

\begin{theorem}\label{thm:rankraising}
    Suppose $E$ satisfies Assumption \ref{ass:main}. Then for any given integer $n\ge \dim\Sel(E)$, there exists infinitely many abelian varieties $A$ obtained from $E$ via level raising, such that $$\dim\Sel(A)=n.$$
\end{theorem}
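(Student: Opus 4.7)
The plan is to mirror the proof of Theorem \ref{thm:ranklowering}, replacing the appeal to Proposition \ref{pro:ranklowering} by an appeal to Proposition \ref{pro:rankraising}, and inducting on the number $m$ of level raising primes. The base case is $m=0$, where $A=E$ itself: the hypotheses of Proposition \ref{pro:rankraising} are vacuously satisfied since there are no level raising primes $q_i$ to check, and $\dim\Sel(E)$ is the starting Selmer rank.

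For the inductive step, suppose we have already constructed an abelian variety $A$ obtained from $E$ via level raising at primes $q_1,\ldots,q_m$, satisfying the three hypotheses of Proposition \ref{pro:rankraising} and with $\dim\Sel(A)=\dim\Sel(E)+m$. Proposition \ref{pro:rankraising} then produces a positive density set of primes $q_{m+1}$ and, for each such prime, an abelian variety $A'$ obtained via level raising at $q_1,\ldots,q_m,q_{m+1}$ which still satisfies the three hypotheses (so the induction propagates) and which has $\dim\Sel(A')=\dim\Sel(A)+1=\dim\Sel(E)+m+1$. Iterating $n-\dim\Sel(E)$ times starting from $E$ produces an $A$ with $\dim\Sel(A)=n$.

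To obtain infinitely many such $A$, note that at every stage of the induction, the positive density of admissible primes $q_{m+1}$ supplied by Proposition \ref{pro:rankraising} (via Lemma \ref{lem:raisingdensity}) is infinite, and different choices of the level raising primes $q_1,\ldots,q_m$ produce non-isogenous abelian varieties (they have different conductors). So infinitely many $A$ with $\dim\Sel(A)=n$ can be produced, for instance, by varying the choice of $q_m$ at the final stage over its positive density set.

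I do not anticipate a serious obstacle here: the engine of the argument is entirely contained in Proposition \ref{pro:rankraising}, whose proof in turn relies on the level raising Theorem \ref{thm:levelraising} (to produce $A'$ with the prescribed sign $\varepsilon_{m+1}=+1$), on Lemma \ref{lem:rankraising} (the quadratic-form refinement of the comparison of Selmer groups), and on Lemma \ref{lem:raisingdensity} (Chebotarev together with the isotropy computation of Lemma \ref{lem:isotropic}). The only subtlety is making sure that the inductive hypotheses of Proposition \ref{pro:rankraising} are preserved at each step, which is built into the conclusion of that proposition; the base case $m=0$ for $A=E$ carries no conditions at the $q_i$ to verify.
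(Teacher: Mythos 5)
Your induction for $n>\dim\Sel(E)$ is exactly the paper's argument: start from $E$ (where the hypotheses of Proposition \ref{pro:rankraising} are vacuous), apply Proposition \ref{pro:rankraising} repeatedly, and get infinitely many $A$ by varying the last level raising prime over its positive density set. That part is fine.

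The gap is the boundary case $n=\dim\Sel(E)$. Your construction iterates $n-\dim\Sel(E)=0$ times, so it produces only the single abelian variety $A=E$, and your mechanism for getting infinitely many examples --- ``varying the choice of $q_m$ at the final stage'' --- has no final stage to vary. The theorem demands infinitely many $A$ with $\dim\Sel(A)=n$ for \emph{every} $n\ge\dim\Sel(E)$, including $n=\dim\Sel(E)$, so this case needs a separate argument. The paper's fix: first apply Proposition \ref{pro:rankraising} once to produce $A$ (at one level raising prime, with sign $+1$ and $H^1(\mathbb{Q}_{q_1},V)$ two-dimensional) with $\dim\Sel(A)=\dim\Sel(E)+1\ge 1$; this $A$ satisfies the hypotheses of Proposition \ref{pro:ranklowering}, which then supplies a positive density set of second primes $q_2$ and abelian varieties $A'$ with $\dim\Sel(A')=\dim\Sel(E)$. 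Varying $q_2$ (or $q_1$) then gives the required infinitude. With that one additional step your proof is complete.
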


\begin{proof}
  The statement for $n>\dim\Sel(E)$ follows immediately from Proposition \ref{pro:rankraising} by induction on the number of level raising primes $m$. Applying Proposition \ref{pro:ranklowering} to $A$ with $\dim\Sel(A)=\dim\Sel(E)+1$ once, the statement for $n=\dim\Sel(E)$ also follows.
\end{proof}

Our main Theorem \ref{thm:mainarbitrary} then follows from Theorem \ref{thm:ranklowering} and \ref{thm:rankraising}.

\bibliographystyle{amsalpha} 
\bibliography{2-Selmer} 

\newcommand{\etalchar}[1]{$^{#1}$}
\providecommand{\bysame}{\leavevmode\hbox to3em{\hrulefill}\thinspace}
\providecommand{\MR}{\relax\ifhmode\unskip\space\fi MR }
\providecommand{\MRhref}[2]{%
  \href{http://www.ams.org/mathscinet-getitem?mr=#1}{#2}
}
\providecommand{\href}[2]{#2}
\begin{thebibliography}{BLGGT14}

\bibitem[All14]{Allen2014}
Patrick~B. Allen, \emph{Modularity of nearly ordinary 2-adic residually
  dihedral {G}alois representations}, Compos. Math. \textbf{150} (2014), no.~8,
  1235--1346. \MR{3252020}

\bibitem[BC]{Brinon-Conrad}
Olivier Brinon and Brian Conrad, \emph{Cmi summer school notes on p-adic hodge
  theory}, \url{http://math.stanford.edu/~conrad/papers/notes.pdf}.

\bibitem[BCDT01]{Breuil2001}
Christophe Breuil, Brian Conrad, Fred Diamond, and Richard Taylor, \emph{On the
  modularity of elliptic curves over {$\bold Q$}: wild 3-adic exercises}, J.
  Amer. Math. Soc. \textbf{14} (2001), no.~4, 843--939 (electronic).
  \MR{1839918 (2002d:11058)}

\bibitem[BCP97]{Bosma1997}
Wieb Bosma, John Cannon, and Catherine Playoust, \emph{The {M}agma algebra
  system. {I}. {T}he user language}, J. Symbolic Comput. \textbf{24} (1997),
  no.~3-4, 235--265, Computational algebra and number theory (London, 1993).
  \MR{MR1484478}

\bibitem[BD99]{Bertolini1999}
M.~Bertolini and H.~Darmon, \emph{Euler systems and {J}ochnowitz congruences},
  Amer. J. Math. \textbf{121} (1999), no.~2, 259--281. \MR{1680333
  (2001d:11060)}

\bibitem[BLGGT14]{BLGGT}
Thomas Barnet-Lamb, Toby Gee, David Geraghty, and Richard Taylor,
  \emph{Potential automorphy and change of weight}, Ann. of Math. (2)
  \textbf{179} (2014), no.~2, 501--609. \MR{3152941}

\bibitem[Boe]{Boecklerbook}
Gebhard Boeckle, \emph{Deformations of galois representations},
  \url{http://www.iwr.uni-heidelberg.de/groups/arith-geom/boeckle/Deformations-Barca.pdf}.

\bibitem[Con97]{Conrad1997}
Brian Conrad, \emph{The flat deformation functor}, Modular forms and {F}ermat's
  last theorem ({B}oston, {MA}, 1995), Springer, New York, 1997, pp.~373--420.
  \MR{1638486}

\bibitem[DDT97]{Darmon1997}
Henri Darmon, Fred Diamond, and Richard Taylor, \emph{Fermat's last theorem},
  Elliptic curves, modular forms \& {F}ermat's last theorem ({H}ong {K}ong,
  1993), Int. Press, Cambridge, MA, 1997, pp.~2--140. \MR{1605752 (99d:11067b)}

\bibitem[DT94a]{Diamond1994a}
Fred Diamond and Richard Taylor, \emph{Lifting modular mod {$l$}
  representations}, Duke Math. J. \textbf{74} (1994), no.~2, 253--269.
  \MR{1272977 (95e:11052)}

\bibitem[DT94b]{Diamond1994}
\bysame, \emph{Nonoptimal levels of mod {$l$} modular representations}, Invent.
  Math. \textbf{115} (1994), no.~3, 435--462. \MR{1262939 (95c:11060)}

\bibitem[Gee11]{Gee}
Toby Gee, \emph{Automorphic lifts of prescribed types}, Math. Ann. \textbf{350}
  (2011), no.~1, 107--144. \MR{2785764 (2012c:11118)}

\bibitem[GP12]{Gross2012}
Benedict~H. Gross and James~A. Parson, \emph{On the local divisibility of
  {H}eegner points}, Number theory, analysis and geometry, Springer, New York,
  2012, pp.~215--241. \MR{2867919}

\bibitem[Kas99]{Kassaei1999}
Payman~L. Kassaei, \emph{p-adic modular forms over {S}himura curves over {Q}},
  ProQuest LLC, Ann Arbor, MI, 1999, Thesis (Ph.D.)--Massachusetts Institute of
  Technology. \MR{2716881}

\bibitem[Kis09]{Kisin2009}
Mark Kisin, \emph{Modularity of 2-adic {B}arsotti-{T}ate representations},
  Invent. Math. \textbf{178} (2009), no.~3, 587--634. \MR{2551765
  (2010k:11089)}

\bibitem[Li15]{Li2015}
Chao Li, \emph{2-{S}elmer groups and {H}eegner points on elliptic curves},
  Ph.D. thesis, Harvard University, 2015.

\bibitem[Mil86]{Milne1986}
J.~S. Milne, \emph{Arithmetic duality theorems}, Perspectives in Mathematics,
  vol.~1, Academic Press, Inc., Boston, MA, 1986. \MR{881804 (88e:14028)}

\bibitem[MR10]{Mazur2010}
B.~Mazur and K.~Rubin, \emph{Ranks of twists of elliptic curves and {H}ilbert's
  tenth problem}, Invent. Math. \textbf{181} (2010), no.~3, 541--575.
  \MR{2660452 (2012a:11069)}

\bibitem[O'N02]{ONeil2002}
Catherine O'Neil, \emph{The period-index obstruction for elliptic curves}, J.
  Number Theory \textbf{95} (2002), no.~2, 329--339. \MR{1924106 (2003f:11079)}

\bibitem[Pil]{Pilloni}
Vincent Pilloni, \emph{The study of 2-dimensional $p$-adic galois deformations
  in the $\ell$ not $p$ case},
  \url{http://perso.ens-lyon.fr/vincent.pilloni/Defo.pdf}.

\bibitem[PR12]{Poonen2012}
Bjorn Poonen and Eric Rains, \emph{Random maximal isotropic subspaces and
  {S}elmer groups}, J. Amer. Math. Soc. \textbf{25} (2012), no.~1, 245--269.
  \MR{2833483}

\bibitem[Ray74]{Raynaud1974}
Michel Raynaud, \emph{Sch\'emas en groupes de type {$(p,\dots, p)$}}, Bull.
  Soc. Math. France \textbf{102} (1974), 241--280. \MR{0419467 (54 \#7488)}

\bibitem[Rib90]{Ribet1990}
Kenneth~A. Ribet, \emph{Raising the levels of modular representations},
  S\'eminaire de {T}h\'eorie des {N}ombres, {P}aris 1987--88, Progr. Math.,
  vol.~81, Birkh\"auser Boston, Boston, MA, 1990, pp.~259--271. \MR{1042773
  (91g:11055)}

\bibitem[S{\etalchar{+}}13]{Stein2013}
W.\thinspace{}A. Stein et~al., \emph{{S}age {M}athematics {S}oftware ({V}ersion
  5.11)}, The Sage Development Team, 2013, {\tt http://www.sagemath.org}.

\bibitem[Ser72]{Serre1972}
Jean-Pierre Serre, \emph{Propri\'et\'es galoisiennes des points d'ordre fini
  des courbes elliptiques}, Invent. Math. \textbf{15} (1972), no.~4, 259--331.
  \MR{0387283 (52 \#8126)}

\bibitem[{Sno}11]{Snowden}
A.~{Snowden}, \emph{{Singularities of ordinary deformation rings}}, ArXiv
  e-prints (2011).

\bibitem[SU14]{Skinner2014}
Christopher Skinner and Eric Urban, \emph{The {I}wasawa main conjectures for
  {$\rm GL_2$}}, Invent. Math. \textbf{195} (2014), no.~1, 1--277. \MR{3148103}

\bibitem[Tat97]{Tate1997}
John Tate, \emph{Finite flat group schemes}, Modular forms and {F}ermat's last
  theorem ({B}oston, {MA}, 1995), Springer, New York, 1997, pp.~121--154.
  \MR{1638478}

\bibitem[TW95]{Taylor1995}
Richard Taylor and Andrew Wiles, \emph{Ring-theoretic properties of certain
  {H}ecke algebras}, Ann. of Math. (2) \textbf{141} (1995), no.~3, 553--572.
  \MR{1333036 (96d:11072)}

\bibitem[Wil95]{Wiles1995}
Andrew Wiles, \emph{Modular elliptic curves and {F}ermat's last theorem}, Ann.
  of Math. (2) \textbf{141} (1995), no.~3, 443--551. \MR{1333035 (96d:11071)}

\bibitem[Zar74]{Zarhin1974}
Ju.~G. Zarhin, \emph{Noncommutative cohomology and {M}umford groups}, Mat.
  Zametki \textbf{15} (1974), 415--419. \MR{0354612 (50 \#7090)}

\bibitem[Zha14]{Zhang2014}
Wei Zhang, \emph{Selmer groups and the indivisibility of heegner points},
  Cambridge Journal of Mathematics \textbf{2} (2014), no.~2, 191 -- 253.

\end{thebibliography}

\end{document}